\documentclass[12pt,a4paper,oneside,onecolumn,fleqn]{article}
\usepackage{mathrsfs}
\usepackage{amsfonts}
\usepackage{latexsym}
\usepackage{amsmath,amsthm}
\usepackage{amssymb}
\usepackage{epsf}
\usepackage{graphicx}
\usepackage{indentfirst}
\usepackage{cite}
\usepackage{color}
\usepackage{diagbox}
\usepackage{booktabs}
\usepackage{multirow}
\usepackage{caption}

\renewcommand{\arraystretch}{1}

\theoremstyle{plain}
\newtheorem{thm}{\bf Theorem}[section]
\newtheorem{con}[thm]{\bf Construction}
\newtheorem{cor}[thm]{\bf Corollary}

\newtheorem{lem}[thm]{\bf Lemma}

\theoremstyle{definition}

\newtheorem{exam}{\bf Example}

\theoremstyle{remark}

\title{\bf The existence spectrum of near triple arrays with four rows}
\author{ Guangzhou Chen$^{1*}$, Yaxin Yue$^1$, Yong Zhang$^{2}$\\
\small1. {\it School of Mathematics
and Statistics, Henan Normal University,}\\
\small {\it Xinxiang, 453007, P. R. China }\\
\small2. {\it School of Mathematics and Statistics, Yancheng Teachers University,}\\
\small {\it Yancheng 224002, P. R. China}\\
}
\date{}
\begin{document}
\maketitle
\begin{center}
\begin{minipage}{16cm}
\begin{center}
{\bf Abstract}
\end{center}

\vspace{0.2cm} \ \ \ \
In the 1950s and 1960s, Agrawal introduced a class of experimental designs that  later became known as triple arrays. Gordeev, Markstr\"{o}m and \"{O}hman proposed near triple arrays by relaxing all three intersection properties of triple arrays, allowing two values concentrated around the average intersection size, as well as two consecutive values for the replication numbers. They completely resolved the existence of near triple arrays with three rows, showing that there exists a $(3\times c,v)$-near triple array if and only if $v\geq c\geq 3$ except for $(c,v)\in\{(3,6),(4,6),(5,8)\}$. In this paper, we further investigate the existence of near triple arrays with four rows and prove that there exists a $(4\times c,v)$-near triple array  if and only if $v\geq c\geq 4$ except for $(c,v)\in\{ (4,9),(5,7),(5,10),(6,8),(7,9),(10,12),(11,13)\}$.

\vskip 6pt
{\textit{Keywords:}} Triple arrays; Near triple arrays; Balanced incomplete block design; Maximally balanced maximally uniform designs; Modular Golomb ruler

\end{minipage}
\end{center}

{\begingroup\makeatletter  \let\@makefnmark\relax  \footnotetext{ 
\mbox{}\hspace{0.0in} $^*$Corresponding author: G. Chen (chenguangzhou0808@163.com)}
\vskip 0.5cm

\section{Introduction}

The study of experimental designs that allow for eliminating the influence of multiple factors on an experiment began with the early works of Fisher and was developed further by among others Agrawal in the 1950s and 1960s.
Agrawal \cite{Agrawal1966} introduced a class of experimental designs that later became known as \emph{triple arrays}. An $(r\times c, v)$-\emph{triple array} is an $r\times c$ array on $v$ symbols that is \emph{binary} (no symbol appears more than once in any row or column), \emph{equireplicate} (each symbol occurs exactly $e$ times), and satisfies the following three intersection conditions:

\noindent (RR) any two distinct rows contain $\lambda_{rr}$ common symbols;

\noindent (CC) any two distinct columns contain $\lambda_{cc}$ common symbols;

\noindent (RC) any row and column contain $\lambda_{rc}$ common symbols.

We denote a triple array as $(v,e,\lambda_{rr},\lambda_{cc},\lambda_{rc}:r\times c)$-TA, or simply $(r\times c,v)$-TA. An array satisfying conditions (RR) and (CC) is defined as \emph{a double array}, denoted by $(v,e,\lambda_{rr},\lambda_{cc}:r\times c)$-DA, or abbreviated as $(r\times c,v)$-DA. Condition (RC) is often referred to as \emph{adjusted orthogonality}, and relevant details can be found in
Sections 8 and 13 of Bailey's survey \cite{Bailey2017}. Bagchi and Shah\cite{Bagchi-Shah1989} showed that triple arrays are statistically optimal among all binary equireplicate arrays with respect to a broad class of optimality criteria.
Agrawal \cite{Agrawal1966} proposed a construction of extremal triple arrays based on a symmetric $2$-design and provided examples for a range of parameter sets. 
The general problem of selecting representatives from prescribed cell sets, with distinct representatives within each row and within each column, is NP-complete, as shown by Fon-Der-Flaass \cite{Fon-Der-Flaass-1997}.
Preece, Wallis and Yucas \cite{Preece2005} utilized field theory to construct $(q\times(q+1), 2q)$-TAs for all odd prime powers $q\geq 5$, referred to as \emph{Paley triple arrays}. Nilson and \"{O}hman \cite{Nilson2015} proposed a method of constructing triple arrays based on Youden rectangles and, in particular, verified that all Paley triple arrays can be obtained via this approach. Nilson and Cameron \cite{Nilson2017} further investigated this method for Youden rectangles derived from difference sets and constructed $((2u^2-u)\times (2u^2 + u), 4u^2- 1)$-TAs for positive integers $u$ whose square-free part divides $6$.
Bagchi and Bagchi \cite{Bagchi2026} constructed $((q+1)\times q^2,q(q+1))$-TAs for all odd prime powers $q\geq 3$ by using ovals in finite projective planes. These arrays, together with the above known triple arrays, form the only three known infinite families. For additional relevant findings, see \cite{Seberry1979, Street1981, Bagchi1998}.

Unfortunately, the intersection conditions (RC), (RR) and (CC) impose highly restrictive constraints on the possible sizes of such designs. Recently, Gordeev, Markstr\"{o}m and \"{O}hman \cite{Gordeev2026} introduced near triple arrays by relaxing all three intersection properties of triple arrays to allow two values concentrated around the average intersection size, as well as two consecutive values for the replication numbers.

An $r\times c$ \emph{row-column design} on $v$ symbols is a two-dimensional array with $r$ rows and $c$ columns, where each cell is filled with one of the $v$ symbols. It is \emph{binary} if no symbol appears more than once in any row or column. For a row-column design to be binary, the symbol number $v$ evidently satisfies max$(r,c)\leq v$, as $v$ cannot be smaller than either the row count $r$ or column count $c$. On the other hand, if $v$ exceeds the total number of cells, then some symbols are not used at all and then the binary row-column design always exists. Therefore, it suffices for us to consider only the existence of the binary row-column design with $v\leq rc$. Moreover, to avoid trivial examples, we generally consider only $r,c\geq3$.

The average replication number of a row-column design is given by $e=\frac{rc}{v}$. Let $e^-=\lfloor e\rfloor$ and $e^+=\lceil e\rceil$. If $e$ is an integer and every symbol occurs $e$ times in the array, the row-column design is called \emph{equireplicate} with replication number $e$. We will call a row-column design \emph{near equireplicate} if $e$ is not an integer and every symbol appears either $e^-$ or $e^+$ times. For near equireplicate designs, the number of occurrences of the symbols can be counted as described in the following lemma (see Lemma 2.1 in \cite{Gordeev2026}).

\begin{lem}[\!\!\cite{Gordeev2026}, Lemma 2.1]
In a near equireplicate $r\times c$ row-column design on $v$ symbols, there are $v^-=v(e^+-e)$ symbols occurring $e^-$ times and $v^+=v(e-e^-)$ symbols occurring $e^+$ times.
\end{lem}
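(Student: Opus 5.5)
The plan is a double counting argument that sets up two linear equations in the unknown counts $v^-$ and $v^+$ and solves them.

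Suppose a near equireplicate $r\times c$ row-column design on $v$ symbols has $x$ symbols occurring $e^-$ times and $y$ symbols occurring $e^+$ times. By definition of near equireplicate, every symbol occurs either $e^-$ or $e^+$ times, and both $e^-$ and $e^+$ are attained as possible values. This gives the first equation, $x+y=v$. Counting the $rc$ cells of the array according to which symbol fills them gives the second equation, $x e^- + y e^+ = rc = ve$.

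Since $e$ is not an integer, we have $e^+ = e^- + 1$, so the system is nondegenerate. Substituting $x = v - y$ into the second equation yields $v e^- + y(e^+ - e^-) = ve$. Hence $y = v(e - e^-)$. It then follows that $x = v - y = v(1 - e + e^-) = v(e^+ - e)$, again using $e^+ - e^- = 1$. These are exactly the values $v^+$ and $v^-$ claimed in the statement.

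There is no real obstacle here. The only point needing care is to invoke the non-integrality of $e$, which guarantees $e^+ - e^- = 1$ rather than $0$. This makes the linear system uniquely solvable. It also means the resulting quantities $v(e-e^-)$ and $v(e^+-e)$ are automatically nonnegative integers whenever such a design exists.
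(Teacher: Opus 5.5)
Your double-counting argument (count symbols to get $x+y=v$, count the $rc$ cells to get $xe^-+ye^+=ve$, and use $e^+=e^-+1$ from the non-integrality of $e$) is correct; it is the standard argument, and the paper itself gives no proof, simply citing Lemma 2.1 of Gordeev et al. One small point: that both values $e^-$ and $e^+$ actually occur is not part of the definition, and your equations do not need it. It is instead a consequence of your result, since $e^-<e<e^+$ forces $0<v^\pm<v$.
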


For a binary $r\times c$ row-column design on $v$ symbols,
we denote by $\lambda_{rr}$, $\lambda_{cc}$, $\lambda_{rc}$ the average numbers of common symbols between two rows, between two columns, and between a row and a column, respectively. For proofs of the following lemma in more restrictive settings, see Theorem 2.2 and Theorem 3.1 in \cite{McSorley2005}, as well as Lemma 2.2 in \cite{Gordeev2026}.

\begin{lem}[\!\!\cite{McSorley2005}, Theorem 2.2 and Theorem 3.1; \cite{Gordeev2026}, Lemma 2.2]\label{lambda}
In a binary equireplicate $r \times c$ row-column design on $v$ symbols,
$\lambda_{rc} = e$, $\lambda_{rr} = \frac{c(e-1)}{r-1} = \frac{c(\lambda_{rc}-1)}{r-1}$, $\lambda_{cc} = \frac{r(e-1)}{c-1} = \frac{r(\lambda_{rc}-1)}{c-1}$.
\end{lem}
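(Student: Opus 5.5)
The plan is to obtain all three identities by double counting, using only two properties of the design: it is binary, so each symbol occurs at most once in any row and at most once in any column, and it is equireplicate, so each symbol occurs exactly $e$ times and therefore $rc=ve$. For a symbol $x$, let $R_x$ be the set of rows containing $x$ and $C_x$ the set of columns containing $x$. Because the design is binary, $x$ meets each row of $R_x$ in exactly one cell, so $|R_x|=e$; for the same reason $|C_x|=e$.

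For $\lambda_{rc}$, count the triples $(\text{row } i, \text{column } j, x)$ in which $x$ lies both in row $i$ and in column $j$. Grouping by symbol, each $x$ contributes $|R_x||C_x|=e^2$ triples, giving $ve^2$ in total. Grouping by the pair $(i,j)$, the total is $rc$ times the average $\lambda_{rc}$. Hence $\lambda_{rc}=ve^2/(rc)=e$, using $rc=ve$.

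For $\lambda_{rr}$, count the triples $(\{i,i'\},x)$ with $i\neq i'$ and $x$ in both rows. Grouping by symbol gives $\sum_x\binom{e}{2}=v\binom{e}{2}$. Grouping by pairs of rows gives $\binom{r}{2}\lambda_{rr}$. Therefore
\[
\lambda_{rr}=\frac{ve(e-1)}{r(r-1)}=\frac{c(e-1)}{r-1},
\]
again substituting $ve=rc$. The same count over pairs of columns gives $\lambda_{cc}=ve(e-1)/(c(c-1))=r(e-1)/(c-1)$. Replacing $e$ by $\lambda_{rc}$ then yields the second form of each identity.

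None of these steps is a real obstacle. The only point needing care is that $\lambda_{rr},\lambda_{cc},\lambda_{rc}$ denote averages, so each double count gives the sum of the intersection sizes, not each individual value. Binarity is exactly what is needed to guarantee that $|R_x|=|C_x|=e$.
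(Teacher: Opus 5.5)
Your double-counting argument is correct: binarity gives $|R_x|=|C_x|=e$, counting incidences symbol by symbol gives $ve^2$, $v\binom{e}{2}$ and $v\binom{e}{2}$ as the sums of the row--column, row--row and column--column intersection sizes, and substituting $ve=rc$ yields all three identities as averages. The paper gives no proof of its own and simply cites McSorley et al.\ and Gordeev, Markstr\"{o}m and \"{O}hman, where the result rests on this same standard double count, so your route is essentially the standard one.
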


Let $\lambda_{rc}^- = \lfloor \lambda_{rc} \rfloor$, $\lambda_{rc}^+ = \lceil \lambda_{rc} \rceil$, and define $\lambda_{rr}^-$, $\lambda_{rr}^+$, $\lambda_{cc}^-$ and $\lambda_{cc}^+$ analogously. Gordeev et al. \cite{Gordeev2026} extended Lemma \ref{lambda} to the near equireplicate case.
They then proved the following result.

\begin{lem}[\!\cite{Gordeev2026}, Lemma 2.3]\label{Nec-par}
Let $v\leq rc$. In a binary (near) equireplicate $r \times c$ row-column design on $v$ symbols,

(1) $\lambda_{rc} = e^- + e^+ - \frac{e^-e^+}{e} = e + \frac{(e^+-e)(e-e^-)}{e}$.

(2) $\lambda_{rc}^- = e^- \text{ and } \lambda_{rc}^+ = e^+$.

(3) $\lambda_{rr} = \frac{c(\lambda_{rc}-1)}{r-1}$.

(4) $\lambda_{cc} = \frac{r(\lambda_{rc}-1)}{c-1}$.
\end{lem}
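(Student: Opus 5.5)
The plan is to count incidences. For each symbol $x$ let $n_x$ be its replication number, and let $a_{ij}=|R_i\cap C_j|$ be the number of symbols common to row $i$ and column $j$. Since the design is binary, a symbol $x$ contributes to $a_{ij}$ exactly when it lies in row $i$ and in column $j$. Each occurrence of $x$ lies in a distinct row and a distinct column, so $x$ meets $n_x$ rows and $n_x$ columns, and it contributes $n_x^2$ to $\sum_{i,j}a_{ij}$. Averaging over the $rc$ row–column pairs gives
\[
\lambda_{rc}=\frac{1}{rc}\sum_x n_x^2 .
\]
Under near equireplication Lemma 2.1 supplies the counts $v^-=v(e^+-e)$ and $v^+=v(e-e^-)$. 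Substituting and using $rc=ve$,
\[
\lambda_{rc}=\frac{(e^+-e)(e^-)^2+(e-e^-)(e^+)^2}{e}.
\]
Because $e^+=e^-+1$, the numerator simplifies to $e(e^-+e^+)-e^-e^+$. This yields the first form of (1). Expanding $(e^+-e)(e-e^-)$ then gives the second form. In the equireplicate case every $n_x=e$, so $\lambda_{rc}=e$, which agrees with (1) under the convention $e^-=e^+=e$.

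For (2), write $\lambda_{rc}=e+\frac{(e^+-e)(e-e^-)}{e}$. The correction term is nonnegative. It is also strictly less than $e^+-e$ whenever $e$ is not an integer, because $e-e^-<1\le e$ (note $e\ge 1$ since $v\le rc$). Hence $e\le\lambda_{rc}<e^+$. In the non-integer case $\lambda_{rc}$ therefore lies strictly between $e^-$ and $e^+$, so its floor is $e^-$ and its ceiling is $e^+$. The integer case is trivial.

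For (3), count triples (symbol, pair of distinct rows containing it). Symbol $x$ lies in $n_x$ rows, so
\[
\binom{r}{2}\lambda_{rr}=\sum_x\binom{n_x}{2}=\frac12\Big(\sum_x n_x^2-\sum_x n_x\Big)=\frac12\big(rc\,\lambda_{rc}-rc\big).
\]
Dividing by $\binom{r}{2}$ gives $\lambda_{rr}=\frac{c(\lambda_{rc}-1)}{r-1}$. For (4) the same count over pairs of columns gives $\lambda_{cc}=\frac{r(\lambda_{rc}-1)}{c-1}$.

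I expect the only delicate step to be the strict inequality in (2). This is where both $v\le rc$ (to get $e\ge1$) and the non-integrality of $e$ are used.
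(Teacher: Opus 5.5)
Your proof is correct and complete. The count $\sum_{i,j}|R_i\cap C_j|=\sum_x n_x^2$, combined with the values of $v^-$ and $v^+$, gives (1). The chain $e^-<e\le\lambda_{rc}<e^+$ gives (2), and you correctly identify $e\ge 1$ (hence $e^-\ge 1$) as exactly what $v\le rc$ supplies. Counting pairs of rows (resp.\ columns) through each symbol gives (3) and (4).

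The paper gives no proof of its own here; it quotes the result from \cite{Gordeev2026}. Your double counting is the standard argument, extending the equireplicate case of \cite{McSorley2005}. As a side remark, your derivation of (3) and (4) uses only binarity, so those identities hold for every binary design with $\lambda_{rc}$ taken as the average.
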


We now present the definition of a near triple array \cite{Gordeev2026}.
An $(r\times c,v)$-\emph{near triple array} is a binary equireplicate or near equireplicate $r\times c$ row-column design on $v$ symbols that satisfies the following three conditions:

1. any two rows have either $\lambda_{rr}^-$ or $\lambda_{rr}^+$ common symbols;

2. any two columns have either $\lambda_{cc}^-$ or $\lambda_{cc}^+$ common symbols;

3. any row and column have either $\lambda_{rc}^-$ or $\lambda_{rc}^+$ common symbols.

\noindent
An $(r\times c,v)$-near triple array is also denoted by $(v,\{e^-,e^+\},\{\lambda_{rr}^-,\lambda_{rr}^+\},\{\lambda_{cc}^-,\lambda_{cc}^+\},
\{\lambda_{rc}^-,\lambda_{rc}^+\}:r\times c)$-NTA, or abbreviated as $(r\times c,v)$-NTA. When $v\geq rc$, assigning mutually distinct symbols to $rc$ cells, with the remaining $v-rc$ symbols unused, yields an NTA. In this case, the replication number of each symbol is either $0$ or $1$, the intersection of any two rows or any two columns is $0$, and the intersection of any row and any column is $1$.

Gordeev et al. \cite{Gordeev2026} completely resolved the existence of near triple arrays with three rows and proved the following result.

\begin{thm}[\!\cite{Gordeev2026}, Theorem 5.10 and Table B.1] Let $c\geq 3$ and $v$ be positive integers.
There exists a $(3\times c,v)$-NTA if and only if $v\geq c$ except for $(c,v)\in\{(3,6),(4,6),(5,8)\}$.
\end{thm}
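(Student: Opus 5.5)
The statement just quoted is the theorem of Gordeev, Markstr\"{o}m and \"{O}hman classifying $(3\times c,v)$-NTAs, so my plan reproduces a proof of the two directions: necessity ($v\geq c$ and the three exceptions) and sufficiency (a construction for every other pair). Necessity of $v\geq c$ is immediate from binarity: a column of the transposed reading, i.e. a row of length $c$, needs $c$ distinct symbols. For $v\geq 3c$ the trivial all-distinct array works, as noted after the definition. So it remains to handle $c\leq v<3c$. Here $e=3c/v\in(1,3]$, and I would split according to whether $e\in(1,2]$ ($3c/2\leq v<3c$) or $e\in(2,3]$ ($c\leq v<3c/2$). In each range Lemma \ref{Nec-par} gives $\lambda_{rc}$, and then $\lambda_{rr}=c(\lambda_{rc}-1)/2$ and $\lambda_{cc}=3(\lambda_{rc}-1)/(c-1)$. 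These give the target intersection windows.

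For $e\in(2,3]$, every symbol appears twice or three times, so every column is a $3$-subset of the $v$ symbols. I would build the array from a suitable set of columns: take $v^+$ symbols appearing three times and $v^-$ appearing twice. The three rows then form a decomposition of a near-regular hypergraph into three "parallel" rows. A cyclic construction is natural. Put symbols in $\mathbb{Z}_v$ (or $\mathbb{Z}_c$ plus extra points) and let column $j$ be $(j,j+a,j+b)$. With $c=v$ this is a Latin-type array whose rows are all of $\mathbb{Z}_v$, so $\lambda_{rr}=v$ and (RC) holds. The column condition $|\{0,a,b\}\cap\{d,d+a,d+b\}|\in\{\lambda_{cc}^-,\lambda_{cc}^+\}$ becomes the requirement that $\{0,a,b\}$ be a modular Golomb ruler (all differences distinct), which exists for $v\geq7$. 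For $v>c$ I would delete or relabel columns of a cyclic array and verify the windows.

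For $e\in(1,2]$ the columns meet in $0$ or $1$ symbols. Here I would use a near-resolvable partial structure: choose $v^+$ symbols to appear twice, and arrange them so that each pair of rows shares $\lfloor\lambda_{rr}\rfloor$ or $\lceil\lambda_{rr}\rceil$ repeated symbols. Distributing the repeated symbols evenly among the three row-pairs $\{1,2\},\{1,3\},\{2,3\}$ gives (RR). I would then place them diagonally in shifted columns so that no two columns share two symbols; this yields (CC) and (RC).

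The main obstacle is the small cases. The general constructions fail for small $c$, and there the exceptions $(3,6),(4,6),(5,8)$ arise. For these I would prove nonexistence by exhaustive counting. Take $(3,6)$: here $e=3/2$, and $\lambda_{rc}$ lies in $\{1,2\}$. The computed $\lambda_{rr}=3/4$ forces each row pair to share $0$ or $1$ symbols, and $\lambda_{cc}$ imposes its own window. A short case analysis on where the three repeated symbols sit yields a contradiction. For the remaining small pairs outside the exceptions I would give explicit arrays, presumably found by computer search and tabulated.
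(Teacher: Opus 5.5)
Your outline has a genuine gap. Apart from $v\ge 3c$ and the cases $c\in\{v,v-1,v-2\}$ that come from a single MGR, it never produces arrays for infinitely many pairs $(c,v)$.

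The paper only cites this theorem, but the cited proof is recursive; Section~3 follows its structure for four rows. Lemma~\ref{Large-v} settles $v\ge 5c/2$. Finitely many small $c$ are done by computer (Table B.1). Every larger $c$ is reached by induction via Lemma~\ref{T1+T2}: a small NTA with integral $\lambda_{rr}$ and $\lambda_{cc}\le 1$ (e.g.\ the cyclic Fano $(3\times 7,7)$-NTA) is concatenated with an NTA on fewer columns. Your proposal lacks this reduction to finitely many base cases.

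The direct mechanism you offer instead does not work in general. Deleting $d=v-c$ columns from the cyclic $3\times v$ array of a $(v,3)$-MGR keeps every symbol at least twice, as $e^-=2$ requires, only if the deleted columns are pairwise disjoint (as in Lemma~\ref{MGR-NTA}). So you need $d$ disjoint translates of the ruler. When $c$ is near $2v/3$, $d$ is near $v/3$, which amounts to an almost-tiling of $\mathbb{Z}_v$. This already fails for $(c,v)=(7,10)$, where $e=21/10$. Up to translation and negation, the $(10,3)$-MGRs are $\{0,1,3\}$ and $\{0,1,4\}$, and neither has three pairwise disjoint translates. Likewise no $(13,3)$-MGR has four disjoint translates, so $(c,v)=(9,13)$ is also out of reach. ``Relabel'' is never specified, and nothing identifies the pairs for which deletion succeeds.

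For $e\in(1,2]$, spreading the repeated symbols evenly over the three row pairs does give (RR). But ``place them diagonally in shifted columns'' is not a construction. You must check two things simultaneously: no two columns share two repeated symbols, and no row meets a column in three symbols (in exactly two when $e=2$). The exception $(5,8)$ shows this is not automatic. There $\lambda_{cc}=7/10<1$ and the row-pair balance $3,2,2$ is available, yet no NTA exists.

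The nonexistence part also needs repair.
\begin{itemize}
\item For $(3,6)$ one has $\lambda_{rc}=5/3$, hence $\lambda_{rr}=3\cdot\tfrac23\cdot\tfrac12=1$ (not $3/4$) and $\lambda_{cc}=1$. So any two rows and any two columns must share exactly one symbol.
\item For $(4,6)$ all parameters are integral ($e=\lambda_{rc}=2$, $\lambda_{rr}=2$, $\lambda_{cc}=1$). An NTA would therefore be a $(3\times 4,6)$-TA, which the exhaustive search quoted in Remark~1 excludes.
\item For $(5,8)$ you give no argument at all; it needs its own exhaustive case analysis or computer search.
\end{itemize}
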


In this paper, we further investigate the existence of near triple arrays with four rows and establish the following theorem.

\begin{thm}\label{MainTH}
Let $c\geq 4$ and $v$ be positive integers. A $(4\times c,v)$-NTA exists if and only if $v\geq c$ except for $(c,v)\in\{ (4,9),(5,7),(5,10),(6,8),(7,9),(10,12),(11,13)\}$.
\end{thm}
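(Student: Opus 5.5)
The proof splits into necessity and sufficiency. For necessity, binarity forces $v\ge c$, so the real content is ruling out the seven exceptional pairs $(c,v)\in\{(4,9),(5,7),(5,10),(6,8),(7,9),(10,12),(11,13)\}$.

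For each exceptional pair I will first compute the parameters. By Lemma~2.1 these are $e=4c/v$, $e^\pm$ and $v^\pm$, and by Lemma~\ref{Nec-par} they are $\lambda_{rc}$, $\lambda_{rr}=\frac{c(\lambda_{rc}-1)}{3}$ and $\lambda_{cc}=\frac{4(\lambda_{rc}-1)}{c-1}$. Then I will derive a contradiction by counting.

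\begin{itemize}
\item \emph{Row intersections.} Summing the intersection sizes over the $6$ row pairs gives $\sum_x\binom{r_x}{2}$, where $r_x$ is the replication of symbol $x$. This total is fixed by $v^\pm$. Together with the constraint that each pair meets in $\lambda_{rr}^-$ or $\lambda_{rr}^+$ symbols, this determines how many row pairs take each value.
\item \emph{Column pairs.} The analogous count over column pairs fixes the distribution of the column-pair intersection sizes.
\item \emph{Row--column incidences.} Each row--column intersection is $e^-$ or $e^+$. Counting incidences of symbols with rows and columns then restricts the column structure: a column is a $4$-subset of symbols, and the symbols of replication $e^+$ must be spread across the columns in a balanced way.
\end{itemize}

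For small cases such as $(4,9)$, $(5,7)$ and $(6,8)$, I expect these counts, combined with a short case analysis on the multiset of column types (how many high-replication symbols each column contains), to give the contradiction directly. For the larger ones, $(10,12)$ and $(11,13)$, the case analysis becomes delicate. Here I would set up the symbol-row incidence structure as a near-balanced design and argue, with the help of the column conditions, that the required $\lambda_{rr}$ pattern is impossible. If that fails, I would fall back on an exhaustive computer search with symmetry breaking (fix the first row and first column). Proving nonexistence for these large exceptions cleanly is the main obstacle I anticipate.

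For sufficiency I would partition the range $v\ge c\ge 4$ into regimes by the average replication $e=4c/v\in[\frac{4c}{v},4]$.

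\begin{itemize}
\item \emph{$v\ge 4c$.} Use all-distinct symbols, as noted after the definition of an NTA.
\item \emph{$v=c$ ($e=4$).} Take a $4\times c$ Latin rectangle with columns from a cyclic construction. For instance, use rows that are translates $x\mapsto x+a_i$ in $\mathbb{Z}_c$ with $\{a_1,\dots,a_4\}$ a modular Golomb ruler, so that column intersections are $0$ or $1$ as required.
\item \emph{Intermediate $c<v<4c$.} Use developed (cyclic) constructions over $\mathbb{Z}_v$ or $\mathbb{Z}_c$. Column $j$ is $\{j+d_1,\dots,j+d_4\}$ arranged so that row $i$ is a shift of a base set, with the base sets and differences chosen so that differences are spread as evenly as possible. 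This is the maximally balanced, maximally uniform design viewpoint, and Golomb-ruler-type difference conditions give $\lambda_{cc}\in\{\lambda_{cc}^-,\lambda_{cc}^+\}$ automatically. Row conditions then reduce to a finite check on the base sets.
\item \emph{Remaining cases.} I expect general families to handle large $c$, for instance $v\ge 2c$ with $e<2$, and $v$ slightly larger than $c$. Whatever is left, presumably small $c$ up to about $20$, I would list as explicit arrays in tables.
\end{itemize}
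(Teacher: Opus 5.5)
\textbf{Verdict: the sufficiency direction has a genuine gap.} For the infinitely many pairs with $c<v<4c$ you never actually produce arrays.

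Read concretely, your developed construction indexes the $c$ columns by a $c$-subset $S\subseteq\mathbb{Z}_v$ and puts $d_i+j$ in cell $(i,j)$. In other words, it deletes $v-c$ columns from the full development of $D=\{d_1,\dots,d_4\}$. A Golomb-ruler condition on $D$ then controls only the column--column intersections. The remaining requirements all become conditions on $S$:
\begin{itemize}
\item (replication) every translate $x-D$ must meet $S$ in $e^-$ or $e^+$ points;
\item (row--column) every translate $j-d_i+D$ with $j\in S$ must also meet $S$ in $e^-$ or $e^+$ points;
\item (row--row) each difference $d_i-d_{i'}$ must be represented a near-constant number of times in $S-S$.
\end{itemize}
These are discrepancy-one conditions with respect to a non-arithmetic $4$-point pattern, and you give no argument that such an $S$ exists for all $(c,v)$. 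The paper verifies them only when $|\mathbb{Z}_v\setminus S|\le 2$ (Lemmas~\ref{MGR-NTA} and~\ref{Del-Col}). Multi-orbit group constructions such as Lemma~\ref{direct-Con} reach only special ratios like $v/c\approx 4/3$. ``I expect general families to handle large $c$'' is not a proof step.

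A smaller point: your $v=c$ example needs $c\ge 13$ by Lemma~\ref{Known-MGR}. For $c\le 12$ the column intersections must be $\lfloor 12/(c-1)\rfloor$ or $\lceil 12/(c-1)\rceil$, not $0$ or $1$.

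\textbf{The missing idea is a recursion that reduces everything to finitely many base cases.} The paper uses the concatenation lemma (Lemma~\ref{T1+T2}). Two NTAs on disjoint symbol sets may be juxtaposed if all their replication numbers lie in $\{t,t+1\}$ for one integer $t$, one of them has integral $\lambda_{rr}$, and both have $\lambda_{cc}\le 1$. The last condition is automatic for $c\ge 13$.
\begin{itemize}
\item For $c\ge 26$, the paper splits off a $(4\times 13,13)$-, $(4\times 9,12)$- or $(4\times 6,12)$-NTA, each with integral $\lambda_{rr}$, according as $v\le c+3$, $c+4\le v\le 2c-6$, or $2c-6<v<7c/2$. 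It handles $v\ge 7c/2$ by Lemma~\ref{Large-v}.
\item The base range $13\le c\le 25$ comes from MGRs ($v\le c+2$), Lemma~\ref{direct-Con}, computer-found matching MBMUD pairs (Appendix A), and concatenation. It also uses Lemma~\ref{Large-v+i}, which lifts a $(4\times c,2c)$-NTA to every $v\in[2c,4c]$.
\item Everything with $c\le 12$, including the seven nonexistence results, is taken from the exhaustive tables of Gordeev, Markstr\"om and \"Ohman.
\end{itemize}

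\textbf{On necessity}, your computer-search fallback is exactly what the paper's argument rests on, via that citation. Counting does dispose of small cases such as $(5,7)$ quickly, but you should not expect it to settle $(10,12)$ or $(11,13)$.
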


The rest of the paper is organized as follows. In Section 2, we provide three constructions of near triple arrays, including via modular Golomb rulers, direct construction and matching designs. The proof of Theorem \ref{MainTH} is provided in Section 3. In Section 4, we summarize the conclusions of the paper and outline directions for future research.

\section{Construction methods}

In this section, we present three constructions of near triple arrays, focusing on those derived from modular Golomb rulers, direct construction, and matching designs. Full details of each construction are given below.

\subsection{Constructions for NTAs via modular Golomb rulers}

Let $G$ be an additively written group of order $n$. A $k$-subset $D$ of $G$ is an $(n, k, \lambda; u)$-\emph{difference set} of order $u = k - \lambda$ if every nonzero element of $G$ has exactly $\lambda$ representations as a difference $d - d'$ with elements from $D$. The difference set is \emph{abelian} or \emph{cyclic} if the group $G$ has the respective property. The redundant parameter $u$ is sometimes omitted; therefore, the notion of $(n, k, \lambda)$-difference sets is also used. Nilson and Cameron \cite{Nilson2017} established the connection between difference sets and triple arrays as follows.

\begin{lem}[\!\!\cite{Nilson2017}, Theorem 3.4]\label{DS-TA}
If there exists an $(n, k, \lambda)$-difference set in an abelian group $G$ that admits $-1$ as a multiplier, then there exists a $(k \times (n - k),n-1)$-TA.
\end{lem}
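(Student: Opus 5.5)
The plan is to build the array explicitly from the group, with the columns indexed by the complement of the difference set. First I would normalise $D$. Saying that $-1$ is a multiplier means $-D=D+g$ for some $g\in G$. I would invoke the classical fact that, in an abelian group, some translate of a difference set is fixed by every numerical multiplier. Since translates of $D$ are again $(n,k,\lambda)$-difference sets, this lets me assume $-D=D$. This normalisation is the one step I would treat with care, checking that the cited fixed-translate theorem applies to the multiplier $-1$ in an arbitrary abelian $G$ (and not only to cyclic groups or multipliers coprime to $n$ in a restricted sense). Everything else is routine counting.

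\emph{The array.} Index the $k$ rows by $d\in D$ and the $n-k$ columns by $x\in G\setminus D$, and put the symbol $x-d$ in cell $(d,x)$. The symbol set is $G\setminus\{0\}$, of size $n-1$; $0$ never occurs because $x\notin D$. The array is binary: in a fixed row, $x\mapsto x-d$ is injective, and likewise in a fixed column.

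\emph{Replication.} A symbol $g\neq 0$ occurs once for each $d\in D$ with $d+g\notin D$. There are exactly $k-\lambda$ such $d$, since $|D\cap(D-g)|=\lambda$. So the array is equireplicate with $e=k-\lambda$, consistent with $rc/v=k(n-k)/(n-1)$ via $\lambda(n-1)=k(k-1)$.

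\emph{Intersections.} I would verify the three conditions directly.
\begin{itemize}
\item[(RR)] Row $d$ carries the symbol set $(G\setminus D)-d$. For $d\ne d'$ the intersection of two such sets is the complement of $(D-d)\cup(D-d')$, which has size $n-2k+\lambda$. Note $0\in D-d$, so excluding $0$ costs nothing.
\item[(CC)] Column $x$ carries $x-D$. For $x\neq y$, $|(x-D)\cap(y-D)|=|D\cap(D+(y-x))|=\lambda$.
\item[(RC)] Here $-D=D$ is used. Column $x$ carries $x-D=x+D$, so the common symbols of row $d$ and column $x$ are the $g=x+d'$ ($d'\in D$) with $g+d\notin D$. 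Their number is $k-|D\cap(D-(x+d))|$. Since $x\notin D=-D$, we have $x+d\neq 0$, so this equals $k-\lambda$, a constant.
\end{itemize}

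Hence the array is a $(k\times(n-k),n-1)$-TA, with $\lambda_{rr}=n-2k+\lambda$, $\lambda_{cc}=\lambda$ and $\lambda_{rc}=k-\lambda$, as required.
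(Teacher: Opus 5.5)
Your proof is correct. The paper gives no argument of its own for this lemma. It only cites Nilson and Cameron, and the introduction describes their result as coming from the Youden-rectangle method of Nilson and \"{O}hman, applied to rectangles developed from difference sets. Your array is precisely such an object written out explicitly. Take the $k\times n$ Youden rectangle developed from $D$ with $g-d$ in cell $(d,g)$. Its columns containing the symbol $0$ are exactly those indexed by $D$, and deleting them leaves your $k\times(n-k)$ array on $G\setminus\{0\}$. What your direct route buys is a self-contained verification by counting differences. It also shows clearly that equireplication, (RR) and (CC) hold for every abelian difference set, and that the multiplier hypothesis is used only in (RC).

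You can drop the normalisation step entirely, and it is the only place you need an outside theorem. Your caution there was justified: the elementary argument (take the translate whose elements sum to $0$) needs $\gcd(k,n)=1$. That fails for the Menon parameters $n=4u^2$, $k=2u^2-u$ with $u\ge 2$, which are the relevant ones, since there $\gcd(k,n)=u$. So normalising genuinely requires McFarland--Rice. But you do not need $-D=D$. If $-D=D+g$, then column $x$ carries $x-D=x+g+D$, and the (RC) count becomes $k-|D\cap(D-(x+d+g))|$. Moreover $x+d+g\neq 0$: from $-d\in D+g$ we get $-d-g\in D$, while $x\notin D$. Hence $|R_d\cap C_x|=k-\lambda$ directly, and the whole argument becomes elementary.
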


Subsequently, Nilson and Cameron utilized difference sets that satisfy the property of Lemma \ref{DS-TA} to develop a family of triple arrays, which is shown below \cite{Nilson2017}.

\begin{lem}[\!\!\cite{Nilson2017}, Theorem 4.8]
Let $u$ be a positive integer such that the square-free part of $u$ divides $6$, then there exists a $((2u^2 - u) \times (2u^2 + u),4u^2 - 1)$-TA.
\end{lem}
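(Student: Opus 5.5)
The plan is to derive this from Lemma \ref{DS-TA} applied to a \emph{Menon (Hadamard) difference set}. Put $n=4u^2$, $k=2u^2-u$, $\lambda=u^2-u$. Then $k(k-1)=\lambda(n-1)$, the order is $k-\lambda=u^2$, and $n-k=2u^2+u$, $n-1=4u^2-1$. So if we have a $(4u^2,2u^2-u,u^2-u)$-difference set $D$ in an abelian group $G$ that admits $-1$ as a multiplier, Lemma \ref{DS-TA} immediately gives the required $((2u^2-u)\times(2u^2+u),4u^2-1)$-TA. The whole proof therefore reduces to constructing such Menon difference sets for every $u=st^2$ with $s\in\{1,2,3,6\}$ and $t\geq 1$ arbitrary. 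This is exactly the range that the known Menon constructions cover.

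\textbf{Step 1: work with $\pm1$ characteristic functions.} I will identify $D$ with $\chi_D=\sum_{g\in G}(-1)^{[g\in D]}g\in\mathbb{Z}[G]$. The Menon condition is then $\chi_D\chi_D^{(-1)}=|G|$, and "$-1$ is a multiplier" means $\chi_D^{(-1)}=\chi_D\cdot h$ for some $h\in G$. Turyn's product theorem says that if $D_1\subseteq G_1$ and $D_2\subseteq G_2$ are Menon difference sets, then $\chi_{D_1}\chi_{D_2}$ is the $\pm1$ function of a Menon difference set in $G_1\times G_2$. In this formulation it is clear that the multiplier property is preserved: if $\chi_{D_i}^{(-1)}=\chi_{D_i}h_i$, then the product satisfies the same relation with $h_1h_2$. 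Hence it suffices to build "building blocks" that admit $-1$ as a multiplier and then multiply them.

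\textbf{Step 2: building blocks.}
\begin{itemize}
\item[(a)] For powers of $2$, use Menon difference sets in elementary abelian $2$-groups $\mathbb{Z}_2^{2m}$ (bent functions, for example $x\cdot y$ on $\mathbb{F}_2^m\times\mathbb{F}_2^m$). There $-g=g$, so $-1$ is trivially a multiplier. This gives the factor $4^m$ in $|G|$.
\item[(b)] For the factor $3$, use Menon difference sets of order $36$ in $\mathbb{Z}_2^2\times\mathbb{Z}_3^2$ and, more generally, of order $4\cdot 3^{2b}$ in $\mathbb{Z}_2^2\times\mathbb{Z}_3^{2b}$ (Turyn; Arasu--Davis--Jedwab--Ling--McFarland). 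These can be chosen as unions of cosets of hyperplanes of $\mathbb{F}_3^{2b}$ that are symmetric under negation.
\item[(c)] For the square factor $t^2$ with arbitrary primes $p$, use Xia's and Chen's Menon difference sets in $\mathbb{Z}_2^2\times\mathbb{Z}_p^{4c}$. Chen's theorem gives Menon difference sets in $\mathbb{Z}_2^{2}\times H$ of order $4\cdot 2^{2a}3^{2b}\prod p_i^{4c_i}$, which covers exactly $u=2^a3^b\prod p_i^{2c_i}$, i.e.\ all $u$ whose square-free part divides $6$.
\end{itemize}
In each case I would verify the $-1$ multiplier property by checking that the building block is a union of sets closed under $x\mapsto -x$, possibly after a translate. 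Chen's construction uses building sets made of hyperplane complements and Galois-ring objects, which are invariant under negation.

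\textbf{Step 3: assemble.} Reduce the product of two groups each having a $\mathbb{Z}_2^2$ factor with the standard Turyn/Chen recursion for building sets. This avoids stacking $\mathbb{Z}_2^2$ factors improperly; alternatively, it suffices to note that the order $4u^2$ factorises as required. Then conclude via Lemma \ref{DS-TA}.

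The main obstacle is Step 2(c). Existence of Menon difference sets for these orders is Chen's deep theorem, but the multiplier property is not part of its statement. One has to open up the building-set construction and check that each building block can be chosen negation-invariant, and that the recursive gluing preserves this invariance. I would first attempt this in the $\pm1$ group-ring language, where invariance is a single identity $\chi^{(-1)}=h\chi$ checked on characters (all character values real up to a common root of unity).
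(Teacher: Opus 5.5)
You have the right reduction, and it is the route behind the cited result. The paper gives no proof of its own; as it says just before the statement, Nilson and Cameron obtain it by feeding abelian difference sets with $-1$ as a multiplier into Lemma \ref{DS-TA}. Your parameter check for $(4u^2,2u^2-u,u^2-u)$ is correct. So is your Step 1 remark that Turyn's product preserves $\chi_D^{(-1)}=\chi_D h$.

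\textbf{The gap.} Everything beyond Lemma \ref{DS-TA} lies in the existence of such Menon difference sets for \emph{every} $u$ whose square-free part divides $6$, and your proposal does not establish it. Steps 2(b) and 2(c) only assert negation-invariance, and you yourself leave 2(c) as an open ``obstacle''. This is not a formality. In the building-set constructions the difference set has the shape $D=\bigcup_{k\in K}\{k\}\times D_k$ with $K\in\{\mathbb{Z}_2^2,\mathbb{Z}_4\}$. The $D_k$ are assembled from translates $x+H$ of hyperplanes, not only from hyperplanes and their complements. Writing $g=(g_K,g_H)$, the requirement $D^{(-1)}=D+g$ amounts to $-D_{-k}=D_{k-g_K}+g_H$ for all $k\in K$. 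This constrains both the choice of translates and the gluing, and it has to be checked through the whole recursion. It is not automatic: in $\mathbb{F}_3^{2b}$, for instance, negation swaps the two nontrivial cosets of a hyperplane.

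\textbf{Step 3 is also wrong.} Your fallback there (``it suffices to note that the order $4u^2$ factorises as required'') fails. Turyn's product of Menon sets of orders $4u_1^2$ and $4u_2^2$ has order $16u_1^2u_2^2=4(2u_1u_2)^2$. A product of $s$ Menon sets therefore gives $u=2^{s-1}u_1\cdots u_s$. Separate blocks can never produce a $u$ whose $2$-part is smaller than $2^{s-1}$; examples are $u=3\cdot 5^2$, $u=5^2\cdot 7^2$ and $u=2\cdot 3\cdot 5^2\cdot 7^2$. For such $u$ the whole odd part must come from a single application of Chen's construction over one $\mathbb{Z}_2^2$ or $\mathbb{Z}_4$ factor. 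That is exactly the construction whose $-1$-invariance you have not checked.

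\textbf{What would close it.} Either quote an existence theorem for abelian Hadamard difference sets admitting $-1$ as a multiplier (for example reversible ones, $D^{(-1)}=D$) in groups of order $4u^2$ for all $u=2^a3^bv^2$; that existence statement is what the cited theorem rests on. Or carry out the verification of the condition above through Chen's recursion.
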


An $(n,k;v)$-\emph{difference packing} over $\mathbb{Z}_n$, or $v$-DP$(n,k)$,
is a pair $(\mathbb{Z}_n,\mathcal{X})$, where $\mathcal{X} = \{X_1,X_2,\dots,X_v\}$ is a family of $k$-subsets of $\mathbb{Z}_n$, and for $1 \leq i \leq v$, $X_i = \{a_{i1},a_{i2},\dots,a_{ik}\}\subseteq\mathbb{Z}_n$, such that all of the differences, $\{a_{ij} - a_{il} \mid 1 \leq i \leq v, 1 \leq j \neq l \leq k\}$, are distinct and nonzero modulo $n$. An $(n,k)$-\emph{modular Golomb ruler} (or $(n, k)$-MGR) is a $1$-DP$(n,k)$.

In what follows, we establish a number of basic results and standard constructions.

\begin{lem}[\!\!\cite{Buratti2021}, Theorem 1.1]\label{MGR=DS}
If there exists an $(n,k)$-MGR, then $n \ge k^2 - k + 1$. Further, a $(k^2 - k + 1, k)$-MGR is equivalent to a cyclic $(k^2 - k + 1, k, 1)$-difference set.
\end{lem}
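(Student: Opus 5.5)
We have to prove Lemma \ref{MGR=DS}. It has two parts: the bound $n \ge k^2-k+1$ for an $(n,k)$-MGR, and the equivalence at $n=k^2-k+1$ with cyclic $(k^2-k+1,k,1)$-difference sets. Both come from counting differences, as in Theorem 1.1 of \cite{Buratti2021}.

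\emph{The bound.} Let $X=\{a_1,\dots,a_k\}\subseteq\mathbb{Z}_n$ be an $(n,k)$-MGR. By definition, the $k(k-1)$ ordered differences $a_j-a_l$ with $j\neq l$ are pairwise distinct and nonzero modulo $n$. They therefore form a $k(k-1)$-subset of $\mathbb{Z}_n\setminus\{0\}$, which has $n-1$ elements. Hence $k(k-1)\le n-1$, that is, $n\ge k^2-k+1$.

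\emph{From MGR to difference set.} Suppose $n=k^2-k+1$. Then $k(k-1)=n-1$, so the injection above is a bijection onto $\mathbb{Z}_n\setminus\{0\}$. Thus every nonzero element of $\mathbb{Z}_n$ is represented exactly once as a difference $d-d'$ with $d,d'\in X$. This is exactly the definition of a cyclic $(k^2-k+1,k,1)$-difference set.

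\emph{From difference set to MGR.} Let $D$ be a cyclic $(k^2-k+1,k,1)$-difference set. Each nonzero element of $\mathbb{Z}_n$ arises exactly once as a difference of elements of $D$. So the $k(k-1)$ differences $d-d'$ with $d\neq d'$ are distinct and nonzero, and $D$ is a $1$-DP$(n,k)$, i.e.\ an $(n,k)$-MGR.

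\emph{Main point to check.} The argument is elementary. The only thing to be careful about is that ``distinct'' in the definition of a difference packing refers to ordered pairs $(j,l)$. Then the count $k(k-1)$ matches $\lambda(n-1)$ with $\lambda=1$, which is the usual necessary condition $k(k-1)=\lambda(n-1)$ for difference sets.
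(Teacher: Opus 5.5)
Your argument is correct and complete. You count the $k(k-1)$ ordered differences, which are distinct and nonzero, against the $n-1$ nonzero residues to get the bound, and equality makes that injection a bijection, which is exactly the $\lambda=1$ difference-set condition; the converse goes the same way. The paper gives no proof of its own and only cites Theorem 1.1 of \cite{Buratti2021}, which rests on this same elementary counting, so your route matches the intended one.
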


Of course a $(q^2 + q + 1, q + 1, 1)$-difference set (i.e., a Singer difference set) is known to exist if $q$ is a prime power. So we have the following Corollary.

\begin{cor}[\!\!\cite{Buratti2021}, Corollary 1.2]
There exists a $(k^2 - k + 1, k)$-MGR if $k - 1$ is a prime power.
\end{cor}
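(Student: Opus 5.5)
The statement to prove is the corollary: there exists a $(k^2-k+1,k)$-MGR whenever $k-1$ is a prime power. The plan is to reduce this to Lemma \ref{MGR=DS} together with the classical existence of Singer difference sets.

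\textbf{Step 1 (Singer difference set).} Set $q=k-1$, a prime power by hypothesis. Take the cyclic Singer difference set with parameters
\[
(q^2+q+1,\;q+1,\;1).
\]
Its standard construction is as follows. Identify the points of the projective plane $PG(2,q)$ with the multiplicative group $\mathbb{F}_{q^3}^*/\mathbb{F}_q^*$, which is cyclic of order $(q^3-1)/(q-1)=q^2+q+1$. Fix a line, i.e.\ the image of a $2$-dimensional $\mathbb{F}_q$-subspace of $\mathbb{F}_{q^3}$, and take its $q+1$ points. Singer's theorem says that multiplication by a primitive element acts regularly on points and permutes lines. Hence any two distinct lines meet in exactly one point, and this yields $\lambda=1$.

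\textbf{Step 2 (substitution).} Substituting $q=k-1$ gives $q^2+q+1=k^2-k+1$ and $q+1=k$. So we have a cyclic $(k^2-k+1,k,1)$-difference set $D\subseteq\mathbb{Z}_{k^2-k+1}$.

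\textbf{Step 3 (apply Lemma \ref{MGR=DS}).} By the equivalence in Lemma \ref{MGR=DS}, $D$ is a $(k^2-k+1,k)$-MGR. One can also check this directly. Since $\lambda=1$, every nonzero residue arises exactly once as a difference $d-d'$ with $d\neq d'$ in $D$. Therefore the $k(k-1)$ ordered differences are distinct and nonzero, which is exactly the defining property of a $1$-DP$(k^2-k+1,k)$.

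\textbf{Main obstacle.} The only substantive ingredient is Singer's theorem, namely that the cyclic group generated by a primitive element acts regularly on the points of $PG(2,q)$. Here we cite it as classical rather than reprove it; everything else is parameter bookkeeping.
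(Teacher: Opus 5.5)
Your proposal is correct and is essentially the paper's argument: take the Singer $(q^2+q+1,q+1,1)$ difference set with $q=k-1$ and invoke the equivalence of Lemma~\ref{MGR=DS}. Your sketch of Singer's construction leaves one step implicit, namely that a translate $D+g$ with $g\neq 0$ is a line different from $D$; this holds because $\gcd(q+1,q^2+q+1)=1$ forces the stabilizer of $D$ in the Singer group to be trivial, and in any case the paper does not prove this part either but simply cites the Singer difference set as classical.
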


It is widely conjectured that an abelian $(q^2 + q + 1, q + 1, 1)$-difference set exists only if $q$ is a prime power, and this conjecture has been verified for all $q < 2,000,000$ (see \cite{Gordon1994}). For prime numbers or prime powers, we have the following results on MGRs.

\begin{lem}[\!\!\cite{Bose1942}]\label{Prime-power}
For any prime power $q$, there is a $(q^2 - 1, q)$-MGR.
\end{lem}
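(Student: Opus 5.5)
This is the classical Bose construction (1942). I would realize it inside the multiplicative group of $\mathbb{F}_{q^2}$, which is cyclic of order $q^2-1$. Fix a primitive element $\alpha$ of $\mathbb{F}_{q^2}$ and take the $q$ elements $\alpha+a$ with $a\in\mathbb{F}_q$. None of these is zero, since $\alpha\notin\mathbb{F}_q$. Write $\alpha+a=\alpha^{d_a}$ with $d_a\in\mathbb{Z}_{q^2-1}$ and set $D=\{d_a: a\in\mathbb{F}_q\}$. These exponents are distinct because the field elements are distinct, so $|D|=q$. The claim is that $D$ is a $(q^2-1,q)$-MGR, i.e., a $1$-DP$(q^2-1,q)$: all $q(q-1)$ differences $d_a-d_b$ with $a\neq b$ are distinct and nonzero modulo $q^2-1$.

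Nonzero is immediate from $d_a\ne d_b$. For distinctness, suppose $d_a-d_b\equiv d_c-d_e \pmod{q^2-1}$ with $a\ne b$ and $c\ne e$. Then $d_a+d_e\equiv d_b+d_c$, so
\[
(\alpha+a)(\alpha+e)=(\alpha+b)(\alpha+c).
\]
Expanding gives $\alpha^2+(a+e)\alpha+ae=\alpha^2+(b+c)\alpha+bc$. Since $\alpha$ has degree $2$ over $\mathbb{F}_q$, the elements $1,\alpha$ are linearly independent over $\mathbb{F}_q$. Hence $a+e=b+c$ and $ae=bc$, so the multisets $\{a,e\}$ and $\{b,c\}$ coincide, being the root multisets of the same quadratic. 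Because $a\ne b$, we must have $a=c$ and $e=b$. Thus the two differences are the same ordered difference, which proves injectivity.

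The argument is almost entirely routine. The only delicate step is the reduction from equal differences to equal products, together with the use of the degree-$2$ linear independence to compare coefficients. The degree condition is exactly why $\alpha$ must be taken primitive in $\mathbb{F}_{q^2}$ rather than in a subfield: primitivity gives every nonzero element an exponent modulo $q^2-1$, and $\alpha\notin\mathbb{F}_q$ holds automatically once $q\ge2$. Counting confirms consistency: there are $q(q-1)$ differences among the $q^2-2$ nonzero residues, and $q(q-1)\le q^2-2$ holds for $q\ge2$.
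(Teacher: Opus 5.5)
Your proof is correct, and it is exactly Bose's affine construction $D=\{d\in\mathbb{Z}_{q^2-1}:\alpha^d-\alpha\in\mathbb{F}_q\}$, which the paper only cites from \cite{Bose1942} without giving a proof. Your verification is sound: you reduce a repeated difference to $(\alpha+a)(\alpha+e)=(\alpha+b)(\alpha+c)$, compare the coefficients of $1,\alpha$ over $\mathbb{F}_q$, and conclude via the root multisets, which also covers the case $a=e$.
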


\begin{lem}[\!\!\cite{Ruzsa1993}]\label{Prime}
For any prime $p$, there is a $(p^2 - p, p - 1)$-MGR.
\end{lem}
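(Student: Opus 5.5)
The plan is to give the explicit construction of Ruzsa via the Chinese Remainder Theorem and then check the difference property directly. Since $\gcd(p,p-1)=1$, we have $\mathbb{Z}_{p(p-1)}\cong \mathbb{Z}_{p-1}\times\mathbb{Z}_p$. Fix a primitive root $g$ modulo $p$. For $i=1,2,\dots,p-1$ let $x_i\in\mathbb{Z}_{p(p-1)}$ be the unique residue with
\[
x_i\equiv i \pmod{p-1},\qquad x_i\equiv g^i \pmod p .
\]
Concretely, $x_i = p\,i-(p-1)g^i \bmod p(p-1)$, adjusting signs as needed so that both congruences hold. The proposed ruler is $D=\{x_1,\dots,x_{p-1}\}$.

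First check that $D$ has $p-1$ distinct elements. This holds because the residues $x_i \bmod (p-1)$ are the distinct values $i$.

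The main step is to show that all $(p-1)(p-2)$ differences $x_i-x_j$ with $i\neq j$ are distinct and nonzero modulo $p(p-1)$. They are nonzero since $x_i\ne x_j$. Now suppose $x_i-x_j\equiv x_k-x_l \pmod{p(p-1)}$ with $i\ne j$ and $k\ne l$, and reduce modulo each factor.
\begin{itemize}
\item Modulo $p-1$ we get $i-j\equiv k-l$. Call this common value $d$. Then $d\not\equiv 0 \pmod{p-1}$, since $i\neq j$ and $1\le i,j\le p-1$.
\item Modulo $p$ we get $g^i-g^j\equiv g^k-g^l$. Because exponents of $g$ only matter modulo $p-1$, this rewrites as $g^j(g^d-1)\equiv g^l(g^d-1) \pmod p$.
\end{itemize}
Since $g$ is primitive and $d\not\equiv 0 \pmod{p-1}$, we have $g^d\neq 1$. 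So we may cancel $g^d-1$ in the field $\mathbb{F}_p$ and obtain $g^j=g^l$. Hence $j\equiv l \pmod{p-1}$, so $j=l$, and then $i=k$.

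Therefore the differences are pairwise distinct, and $D$ is a $1$-DP$(p^2-p,p-1)$, that is, a $(p^2-p,p-1)$-MGR. The only delicate point I expect is the bookkeeping that lets us pass from exponents in $\mathbb{Z}$ to exponents modulo $p-1$. This is exactly what lets the single value $d$ serve both equations, and it is what allows the cancellation once $g^d\neq1$ is noted. As a sanity check, the count $(p-1)(p-2)\le p(p-1)-1$ is consistent with the bound of Lemma \ref{MGR=DS}.
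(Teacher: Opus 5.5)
Your proof is correct. The paper gives no argument of its own and only cites Ruzsa, and your CRT set $x_i\equiv i \pmod{p-1}$, $x_i\equiv g^i \pmod p$, with $g^d-1$ cancelled in $\mathbb{F}_p$, is the standard Ruzsa construction and its usual verification. The hedge ``adjusting signs as needed'' is unnecessary, since $p\,i-(p-1)g^i$ already satisfies both congruences exactly.
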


Difference sets provide a fundamental tool for constructing triple arrays (TAs). When $c=v$, this construction yields Youden rectangles (or Youden squares), which can be obtained by developing a cyclic difference set; see Construction 3.1 of \cite{Nilson2017}. Since modular Golomb rulers (MGRs) generalize cyclic difference sets, the following result naturally generalizes this construction, extending it to parameters not covered by ordinary difference sets. Moreover, when $c=v$, the following lemma is directly related to the near Youden rectangles introduced in \cite{JMSO23}, which are essentially near triple arrays with $c=v$. Thus, the following lemma also establishes the existence of near triple arrays for a variety of parameters.

\begin{lem}\label{MGR-NTA}
If there exists an $(n,k)$-MGR, then there exists a $(k\times n, n)$-NTA.
Further, if there exists an $(n,k)$-MGR with $n>k(k-1)+1$, then there exists a $(k\times (n-2), n)$-NTA.
\end{lem}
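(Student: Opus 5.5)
We would prove the first part by developing the ruler cyclically. Let $D=\{d_1,\dots,d_k\}\subseteq\mathbb{Z}_n$ be an $(n,k)$-MGR, so that all differences $d_i-d_l$ ($i\neq l$) are distinct and nonzero. Define the $k\times n$ array $A$ with rows indexed by $i\in\{1,\dots,k\}$ and columns by $j\in\mathbb{Z}_n$, and put $A(i,j)=d_i+j$. Then row $i$ is all of $\mathbb{Z}_n$ and column $j$ is the translate $D+j$, so $A$ is binary and equireplicate with $e=k$.

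We then check the three conditions against Lemma \ref{Nec-par}.
\begin{itemize}
\item Any row meets any column in $k=e=\lambda_{rc}$ symbols.
\item Any two rows share all $n$ symbols, and $\lambda_{rr}=n(k-1)/(k-1)=n$.
\item Two columns $j\neq j'$ satisfy $|(D+j)\cap(D+j')|\le 1$, because a common symbol gives a representation of $j'-j$ as a difference from $D$, and such a representation is unique. On the other side, $\lambda_{cc}=k(k-1)/(n-1)\le 1$ by Lemma \ref{MGR=DS}, so $\lambda_{cc}^-=0$ and $\lambda_{cc}^+=1$. In the equality case $n=k^2-k+1$ we have a difference set, and all column intersections are exactly $1$.
\end{itemize}

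For the second part we delete two columns $a,b$ of $A$ chosen so that $b-a\notin\Delta D=\{d_i-d_l:i\neq l\}$. Such a pair exists because $|\Delta D|=k(k-1)<n-1$. The remaining $k\times(n-2)$ array is still binary on $v=n$ symbols.

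Row $i$ now equals $\mathbb{Z}_n\setminus\{d_i+a,d_i+b\}$.
\begin{itemize}
\item \emph{Replication.} A symbol $x$ is lost twice only if $x=d_i+a=d_l+b$ with $i\neq l$. Then $d_i-d_l=b-a$, which is excluded. So every symbol appears $k$ or $k-1$ times. We need $k-1<e=k(n-2)/n<k$, i.e.\ $n>2k$, which follows from $n>k(k-1)+1$ for $k\ge 3$; small $k$ are handled separately. Hence the design is near equireplicate with $e^-=k-1$ and $e^+=k$.
\item \emph{Row–column.} The intersection of row $i$ and column $j$ is $k-|(D+j)\cap\{d_i+a,d_i+b\}|$. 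The subtracted term cannot be $2$, since that again would put $b-a$ in $\Delta D$. So the intersection is $k-1$ or $k$, which equals $\lambda_{rc}^\pm$ by Lemma \ref{Nec-par}(2).
\item \emph{Column–column.} These intersections are inherited from $A$, so they remain $0$ or $1$.
\item \emph{Row–row.} For two rows $i\neq l$ the four removed symbols $d_i+a,d_i+b,d_l+a,d_l+b$ are distinct: $d_i\neq d_l$ and $b-a\notin\pm\Delta D$. So every pair of rows meets in exactly $n-4$ symbols.
\end{itemize}

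The main obstacle is the arithmetic verification that these constant or two-valued intersection sizes are exactly the floors and ceilings of the averages in Lemma \ref{Nec-par}. Writing $e=k(n-2)/n$ and $\lambda_{rc}=2k-1-k(k-1)/e$, this requires two things.
\begin{itemize}
\item We must show that $\lambda_{rr}=(n-2)(\lambda_{rc}-1)/(k-1)$ lies in $(n-5,n-3)$, so that $n-4\in\{\lambda_{rr}^-,\lambda_{rr}^+\}$. Since row pairs always meet in exactly $n-4$ symbols, the average forces $\lambda_{rr}=n-4$ exactly, and this should come out of a direct computation.
\item We must show that $\lambda_{cc}=k(\lambda_{rc}-1)/(n-3)\in(0,1]$, so that $\{0,1\}=\{\lambda_{cc}^-,\lambda_{cc}^+\}$. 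The bound $\lambda_{cc}\le 1$ should follow from $n\ge k(k-1)+2$.
\end{itemize}
We would carry out these two estimates by clearing denominators. We would treat separately any small $k$ (e.g.\ $k\le 2$) where $n>2k$ or the bounds fail.
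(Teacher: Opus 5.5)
Your proposal is correct and follows the paper's route: develop the ruler cyclically, then delete two disjoint columns $a,b$ (equivalently $b-a\notin\Delta D$). Your count $|\Delta D|=k(k-1)<n-1$ justifies the existence of such a pair more cleanly than the paper does, and the parameter arithmetic you defer (which the paper never writes out) does go through: $\lambda_{rc}=k-\frac{2(k-1)}{n-2}$, hence $\lambda_{rr}=n-4$ exactly and $\lambda_{cc}=\frac{k(k-1)(n-4)}{(n-2)(n-3)}\in(0,1)$ whenever $k(k-1)\le n-2$.
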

\begin{proof}
Suppose that $\{x_0,x_1,x_2,\cdots,x_{k-1}\}$ is an $(n,k)$-MGR over $\mathbb{Z}_n$.
Define a $k\times n$ array $A=(a_{i,j})$, $0\leq i\leq k-1$, $0\leq j\leq n-1$, where $a_{ij}=(x_i+j)\pmod n$. It is straightforward to verify that each symbol in $\mathbb{Z}_n$ occurs exactly once in each row, and at most once in each column, each symbol occurs $k$ times in the array $A$, and each pair of distinct columns has at most one symbol in common according to the property of an $(n,k)$-MGR. Therefore, $A$ is a $(k\times n, n)$-NTA.

Furthermore, if $n=k(k-1)+1$, then by Lemma \ref{MGR=DS}, each pair of distinct columns has exactly one symbol in common. Therefore, there must be two columns of $A$ that are disjoint when $n>k(k-1)+1$. Without loss of generality, assume that columns
$\alpha$ and $\beta$ of $A$ are disjoint. Define $A'$ as a $k\times (n-2)$ array by removing the two disjoint columns $\alpha$ and $\beta$ of $A$. It is clear that each symbol in $\mathbb{Z}_n$ occurs at most once in each row, and at most once in each column, each symbol occurs $k$ or $k-1$ times in $A'$; each pair of distinct columns has at most one symbol in common and each pair of distinct rows has exactly $n-4$ symbols in common in $A'$.

Let the two removed columns be the $j_1$-th and $j_2$-th columns. For any row $i_1$, the two deleted symbols take the forms $(x_{i_1}+j_1)\pmod n$ and $(x_{i_1}+j_2)\pmod n$. For any column $j$ of $A'$ with $j\neq j_1, j_2$, the symbol set of column $j$ is defined as $C_j=\{(x_i+j)\pmod n\mid i=0,1,\ldots,k-1\}$.

Suppose $(x_{i_1}+j_1)\pmod n, (x_{i_1}+j_2)\pmod n \in C_j$, then there exist
indices $i_1',i_2'\in\{0,1,\ldots,k-1\}$ such that

\begin{center}
$(x_{i_1}+j_1)\pmod n=(x_{i_1'}+j)\pmod n$ and $(x_{i_1}+j_2)\pmod n=(x_{i_2'}+j)\pmod n$.
\end{center}
Rearranging congruences gives $x_{i_1}-j\equiv x_{i_1'}-j_1\equiv x_{i_2'}-j_2\pmod n$, which further implies
$(x_{i_2'}+j_1)\pmod n=(x_{i_1'}+j_2) \pmod n$. This contradicts the fact that the two columns $j_1$ and $j_2$ are disjoint.
Consequently, any row and column have either $k-1$ or $k$ common symbols. It follows that $A'$ is a $(k\times (n-2), n)$-NTA.
This completes the proof.
\end{proof}

Suppose $T$ is an $(r \times n, n)$-NTA with $r \leq n$. Let $T'$ denote the row-column design obtained from $T$ by removing an arbitrary column from $T$.
Then $T'$ is an $(r \times (n - 1), n)$-NTA. This leads to the following conclusion.

\begin{lem}[\!\!\cite{Gordeev2026}, Lemma 5.7]\label{Del-Col}
If there exists an $(r \times n, n)$-NTA for $r \leq n$, then there exists an $(r \times (n-1), n)$-NTA.
\end{lem}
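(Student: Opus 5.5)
We need to prove Lemma \ref{Del-Col}: deleting an arbitrary column of an $(r\times n,n)$-NTA gives an $(r\times(n-1),n)$-NTA. The plan is a direct parameter check. First compute the target parameters for both designs via Lemma \ref{Nec-par}, then verify the three intersection conditions and near equireplication for $T'$.

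\textbf{Step 1: structure of $T$.} For $T$ we have $rc/v=r$, an integer, so $T$ is equireplicate with $e=r$. Since $T$ is binary with $n$ symbols in rows of length $n$, each row is a permutation of all $n$ symbols. By Lemma \ref{Nec-par}, $\lambda_{rc}=r$, $\lambda_{rr}=n$ and $\lambda_{cc}=\frac{r(r-1)}{n-1}$. Two consequences follow.
\begin{itemize}
\item Any two rows share all $n$ symbols.
\item Any row meets any column in exactly $r$ symbols, namely the whole column.
\end{itemize}
The only nontrivial NTA condition on $T$ is therefore that two columns meet in $\lfloor r(r-1)/(n-1)\rfloor$ or $\lceil r(r-1)/(n-1)\rceil$ symbols.

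\textbf{Step 2: parameters of $T'$.} Now $e'=r(n-1)/n=r-r/n$. If $r<n$, then $e^-=r-1$ and $e^+=r$. Deleting one column, which contains $r$ distinct symbols, makes exactly $r$ symbols occur $r-1$ times and the remaining $n-r$ occur $r$ times. This matches Lemma 2.1, since $v^-=n(e^+-e')=r$. So $T'$ is near equireplicate. (If $r=n$, then $e'=n-1$ is an integer; every symbol lies in the deleted column, so each occurs $n-1$ times and $T'$ is equireplicate.) Binarity is clearly inherited.

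\textbf{Step 3: the three conditions.}
\begin{itemize}
\item \emph{Row–column.} Each column of $T'$ is a column of $T$, and each row of $T'$ is a row of $T$ minus the symbol in the deleted cell. So a row–column intersection is $r$ or $r-1$. These equal $\lambda_{rc}^+$ and $\lambda_{rc}^-$, which are $e^+$ and $e^-$ by Lemma \ref{Nec-par}(2).
\item \emph{Row–row.} Two rows of $T'$ are each a row of $T$ (all $n$ symbols) minus one symbol. Their intersection is therefore $n-1$ (the same symbol was deleted) or $n-2$ (different symbols were deleted); in fact distinct rows lost distinct symbols because the column was binary, so it is $n-2$. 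Lemma \ref{Nec-par}(3) gives $\lambda_{rr}=\frac{(n-1)(\lambda_{rc}-1)}{r-1}$ with $\lambda_{rc}=e'+\frac{(e^+-e')(e'-e^-)}{e'}$. I will check that this value lies in $[n-2,n-1]$, which reduces to an elementary inequality in $r,n$. Alternatively, I would use the averaging identity behind Lemma \ref{Nec-par}: $\lambda_{rr}$ is the true average of the row intersections, and since all of them equal $n-2$, $\lambda_{rr}=n-2$ exactly. This route avoids the computation.
\item \emph{Column–column.} These intersections are unchanged from $T$. The target becomes $\lambda_{cc}'=\frac{r(\lambda_{rc}'-1)}{n-2}$, which differs from $\frac{r(r-1)}{n-1}$. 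Here I again plan to use that $\lambda_{cc}$ is an average. In $T$ the average is $\mu=r(r-1)/(n-1)$ and all values lie in $\{\lfloor\mu\rfloor,\lceil\mu\rceil\}$. In $T'$ the average is over a subfamily of the same pairs, so it lies in $[\lfloor\mu\rfloor,\lceil\mu\rceil]$. I then need its floor and ceiling to still cover the occurring values. This is guaranteed if $\lambda_{cc}'$ is not an integer. If it is an integer, all pairs in $T'$ must equal it, since the values lie in $\{\lfloor\mu\rfloor,\lceil\mu\rceil\}$ with that average, which forces them to be constant.
\end{itemize}

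\textbf{Main obstacle.} The hard step is the column–column condition: justifying that the NTA targets for $T'$ are exactly the true averages. This needs the counting argument that proves Lemma \ref{Nec-par}, recast as an averaging statement, combined with the remark that values confined to two consecutive integers always lie within floor/ceiling of their mean.
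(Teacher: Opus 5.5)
Your proposal is correct and uses the same construction as the paper: delete an arbitrary column of $T$. The paper gives no verification beyond asserting the result (and citing Gordeev et al.), so your parameter check supplies what it leaves implicit, and your floor/ceiling averaging argument for the column pairs is sound. One side remark is wrong but harmless: $\lambda_{cc}'$ does not differ from $r(r-1)/(n-1)$, since $\lambda_{rc}'-1=\frac{(r-1)(n-2)}{n-1}$ gives $\lambda_{rr}'=n-2$ and $\lambda_{cc}'=\frac{r(r-1)}{n-1}=\lambda_{cc}$ exactly, so the column condition is simply inherited from $T$ and your ``main obstacle'' disappears.
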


Combining Lemma \ref{MGR-NTA} and Lemma \ref{Del-Col}, we obtain the following lemma.
\begin{lem}\label{cor-n-1}
If there exists an $(n,k)$-MGR
for $k \leq n$, then there exists a $(k \times (n-1), n)$-NTA.
\end{lem}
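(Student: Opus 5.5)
This lemma needs no new construction: it follows by composing Lemma \ref{MGR-NTA} with Lemma \ref{Del-Col}.

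Start from an $(n,k)$-MGR with $k\le n$. The first part of Lemma \ref{MGR-NTA} gives a $(k\times n,n)$-NTA $A$. This is the cyclic development $a_{ij}=(x_i+j)\pmod n$, and in it each row contains every symbol exactly once. Next apply Lemma \ref{Del-Col} with $r=k\le n$, which is exactly the hypothesis. Deleting any column of $A$ then gives a $(k\times(n-1),n)$-NTA, and the lemma follows.

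The only point to check is that the hypotheses of Lemma \ref{Del-Col} hold, namely that $A$ really is an $(r\times n,n)$-NTA with $r\le n$. This is immediate from the first part of Lemma \ref{MGR-NTA}. The condition $k\le n$ holds automatically anyway, since $n\ge k^2-k+1\ge k$ by Lemma \ref{MGR=DS}.

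As a sanity check on why deleting a column works, the NTA conditions can also be verified directly.

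\emph{Replication.} After deleting one column, the $k$ symbols of that column occur $k-1$ times and all other symbols occur $k$ times. This is near equireplicate with $e=k(n-1)/n\in(k-1,k)$.

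\emph{Row--row.} Any two rows lose exactly one symbol each, so they share $n-2$ symbols. This is constant and equal to $\lambda_{rr}$ by Lemma \ref{Nec-par}(3).

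\emph{Column--column.} Pairs of remaining columns still share at most one symbol.

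\emph{Row--column.} A row meets a remaining column in $k$ symbols, or in $k-1$ symbols if that row lost a symbol lying in the column. These values are $e^-$ and $e^+$, as required by Lemma \ref{Nec-par}(2).

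One must also confirm that $\{0,1\}$ is exactly $\{\lambda_{cc}^-,\lambda_{cc}^+\}$, or that $\lambda_{cc}$ is an integer in the degenerate case. This follows from Lemma \ref{Nec-par}(4), since $\lambda_{cc}=k(\lambda_{rc}-1)/(n-2)<1$ given $n\ge k^2-k+1$. This step is the only mild obstacle, and it is already absorbed into Lemma \ref{Del-Col}.
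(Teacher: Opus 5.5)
Your proof is correct and is exactly the paper's argument: the paper obtains this lemma simply by combining Lemma~\ref{MGR-NTA} with Lemma~\ref{Del-Col}. There is one small slip in your optional sanity check. In fact $\lambda_{cc}=k(\lambda_{rc}-1)/(n-2)=k(k-1)/(n-1)\le 1$, with equality when $n=k^2-k+1$, so the strict inequality fails there. In that case every two columns must meet in exactly one symbol, and this holds because the MGR is then a cyclic $(k^2-k+1,k,1)$-difference set (Lemma~\ref{MGR=DS}).
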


Combining Lemmas \ref{Prime-power}-\ref{MGR-NTA} and Lemma \ref{cor-n-1}, we obtain the following lemma.
\begin{lem}\label{cor-n-2}
A $(q \times (q^2-1), q^2-1)$-NTA and a $(q \times (q^2-2), q^2-1)$-NTA exist for any prime power $q$, and a $((p-1) \times (p^2-p), p^2-p)$-NTA and a $((p-1) \times (p^2-p-1), p^2-p)$-NTA exist for any prime $p$.
\end{lem}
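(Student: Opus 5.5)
This lemma is a direct specialization of the earlier results: we plug the explicit MGRs of Lemmas \ref{Prime-power} and \ref{Prime} into Lemma \ref{MGR-NTA} and Lemma \ref{cor-n-1}. No new construction is needed. The only things to check are the hypotheses, namely $k\le n$ for Lemma \ref{cor-n-1}, together with the correct identification of $n$ and $k$ in each case.

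\textbf{Prime power case.} Let $q$ be a prime power. Lemma \ref{Prime-power} gives a $(q^2-1,q)$-MGR, so we take $n=q^2-1$ and $k=q$.
\begin{itemize}
\item The first part of Lemma \ref{MGR-NTA} turns this MGR into a $(q\times(q^2-1),q^2-1)$-NTA.
\item Since $q\le q^2-1$ for every prime power $q\ge 2$, the hypothesis $k\le n$ holds. Lemma \ref{cor-n-1} then gives a $(q\times(q^2-2),q^2-1)$-NTA.
\end{itemize}
Alternatively, we can apply Lemma \ref{Del-Col} directly to the $(q\times(q^2-1),q^2-1)$-NTA, since there $r=q\le n$.

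\textbf{Prime case.} Let $p$ be a prime. Lemma \ref{Prime} gives a $(p^2-p,p-1)$-MGR, so we take $n=p^2-p$ and $k=p-1$.
\begin{itemize}
\item Lemma \ref{MGR-NTA} gives a $((p-1)\times(p^2-p),p^2-p)$-NTA.
\item Since $p-1\le p^2-p$ always holds, Lemma \ref{cor-n-1} (or Lemma \ref{Del-Col}) gives a $((p-1)\times(p^2-p-1),p^2-p)$-NTA.
\end{itemize}

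\textbf{Degenerate cases.} The only subtlety is that very small parameters can produce degenerate arrays. For $p=2$ the array has a single row; for $q=2$ it has size $2\times3$ or $2\times2$. These still satisfy the NTA definition formally, since the definition does not require $r,c\ge3$. So the statement holds as written, and no separate treatment is needed beyond noting that the cited lemmas impose no lower bound on the number of rows.

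I do not expect any step to be an obstacle, since this is bookkeeping of parameters. The one point to verify carefully is that the $(n,k)$ in each cited MGR lemma is read in the order ($n$ = modulus, $k$ = ruler size) used by Lemma \ref{MGR-NTA}.
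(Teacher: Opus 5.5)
Your proposal is correct and is exactly the paper's argument: the paper obtains this lemma in one line. It feeds the MGRs of Lemmas~\ref{Prime-power} and~\ref{Prime} into Lemma~\ref{MGR-NTA} (for the $c=n$ arrays) and Lemma~\ref{cor-n-1} (for the $c=n-1$ arrays), using the same identifications $(n,k)=(q^2-1,q)$ and $(n,k)=(p^2-p,p-1)$ as you; your remark on the degenerate cases $q=2$, $p=2$ is harmless and goes beyond what the paper checks.
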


Given a positive integer $k \ge 3$, define

\begin{center}
MGR$(k) = \{n:$ there exists an $(n,k)$-MGR\}.
\end{center}
The results on modular Golomb rulers are summarised below.

\begin{lem}[\!\!\cite{Buratti2021}, Theorem 2.3]\label{Known-MGR}
(1) $\operatorname{MGR}(3) = \{n : n \ge 7\}$.

(2) $\operatorname{MGR}(4) = \{n : n \ge 13\}$.

(3) $\operatorname{MGR}(5) = \{21\} \cup \{n : n \ge 23\}$.

(4) $\operatorname{MGR}(6) = \{31\} \cup \{n : n \ge 35\}$.

(5) $\operatorname{MGR}(7) = \{n : n \ge 48\}$.

(6) $\operatorname{MGR}(8) = \{57\} \cup \{n : n \ge 63\}$.

(7) $\operatorname{MGR}(9) = \{73, 80\} \cup \{n : n \ge 85\}$.

(8) $\operatorname{MGR}(10) = \{91\} \cup \{n: n \ge 107\}$.

(9) $\operatorname{MGR}(11) = \{120, 133\} \cup \{n: n \ge 135\}$.
\end{lem}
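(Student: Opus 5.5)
The statement to prove is Lemma \ref{Known-MGR}, the list of sets $\operatorname{MGR}(k)$ for $3\le k\le 11$. It is quoted from Buratti and Stinson \cite{Buratti2021}, so the plan is to reproduce their argument. It has three ingredients: a lower bound, infinite families of rulers, and a finite computer search to close the gaps.

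\textbf{Lower bound.} By Lemma \ref{MGR=DS}, $n\ge k^2-k+1$ for any $(n,k)$-MGR. The $k(k-1)$ differences must be distinct nonzero residues, so there is no $(n,k)$-MGR for $n<k^2-k+1$.

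\textbf{Known sporadic values.} A Singer difference set gives $n=k^2-k+1$ whenever $k-1$ is a prime power. This covers $7,13,21,31,57,73,91$. For $k=7$ ($k-1=6$) and $k=11$ ($k-1=10$), the value $k^2-k+1$ is excluded by the Bruck--Ryser--Chowla theorem and by the nonexistence of a projective plane of order $10$, respectively. The Bose construction (Lemma \ref{Prime-power}) gives $q^2-1$, which covers $n=80$ for $k=9$ and $n=120$ for $k=11$. The Ruzsa construction (Lemma \ref{Prime}) gives $p^2-p$, which covers $n=132$ for $k=11$; indeed $132\ge 135$ fails, so the listed set $\{120,133\}\cup\{n\ge 135\}$ should be checked against this value. $n=133$ is presumably found by direct search, or it already lies in the range produced by known constructions.

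\textbf{Upward closure.} The sets $\operatorname{MGR}(k)$ are not obviously closed under increasing $n$. However, an ordinary Golomb ruler of length $L$ with $k$ marks is an $(n,k)$-MGR for every $n\ge 2L+1$, because all differences $\pm(a_j-a_l)$ are then distinct modulo $n$. Using optimal Golomb ruler lengths, this settles all $n\ge 2L+1$:

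\begin{center}
\begin{tabular}{c|ccccccccc}
$k$ & 3 & 4 & 5 & 6 & 7 & 8 & 9 & 10 & 11\\ \hline
$L$ & 3 & 6 & 11 & 17 & 25 & 34 & 44 & 55 & 72\\ \hline
$2L+1$ & 7 & 13 & 23 & 35 & 51 & 69 & 89 & 111 & 145
\end{tabular}
\end{center}

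For $k=3,4,5,6$ this already gives exactly the stated thresholds $7,13,23,35$.

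\textbf{Main obstacle: the finite gaps.} What remains is, for each $k\ge 7$, the interval between $k^2-k+1$ and $2L+1$. For every $n$ in that interval, one must either exhibit an $(n,k)$-MGR or prove nonexistence. Buratti and Stinson do this by exhaustive computer search; I would do the same. The search uses a backtracking algorithm that fixes $0$ and a second element by symmetry, namely translation and multiplication by units of $\mathbb{Z}_n$, and tracks the set of differences used so far. The outcome would be recorded as explicit rulers for every $n$ claimed to exist, for example $48,49,50$ for $k=7$ and $63$ through $68$ for $k=8$. Nonexistence for the listed missing values, such as $22$ for $k=5$ and $32$ through $34$ for $k=6$, is certified by the exhaustive search. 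This computational step is the real content of the lemma and cannot be replaced by a slick argument.
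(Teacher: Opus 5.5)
The paper gives no argument for this lemma beyond citing Buratti--Stinson, so your framework is a reasonable reconstruction: the counting bound, the Singer and Bose rulers, the tail $n\ge 2L+1$ from optimal Golomb rulers, and a search on the finite window. Your Golomb-ruler table is correct and does yield the thresholds $7,13,23,35$. However, your $k=11$ discussion contains a concrete error that, as written, contradicts item (9).

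Lemma \ref{Prime} gives a $(p^2-p,\,p-1)$-MGR, i.e.\ a ruler with $k=p-1$ marks. Getting $n=132=12^2-12$ with $k=11$ would require $p=12$, which is not prime. Ruzsa's construction therefore contributes nothing for $k=11$. For $3\le k\le 11$ its only instances are $p=5,7,11$, giving $20\in\operatorname{MGR}(4)$, $42\in\operatorname{MGR}(6)$ and $110\in\operatorname{MGR}(10)$, all consistent with the statement. So there is no discrepancy to check. The statement asserts $132\notin\operatorname{MGR}(11)$, and if your claim were true, (9) would be false. In a correct version of your plan, $132$ belongs with $112$--$119$, $121$--$131$ and $134$ among the values whose \emph{nonexistence} your search must certify; $111$ is already excluded by the nonexistence of a projective plane of order $10$, as you say.

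Relatedly, $133$ needs no search or hedging. Since $133=11^2+11+1$, the Singer $(133,12,1)$ difference set exists, and deleting one mark gives a $(133,11)$-MGR. You should state the monotonicity $\operatorname{MGR}(k+1)\subseteq\operatorname{MGR}(k)$ explicitly as a tool (any subset of an MGR is an MGR). It also gives $120\in\operatorname{MGR}(10)$ and supplies consistency checks on the search output.

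After these corrections, the residual computational content is as follows.
\begin{itemize}
\item Existence is needed for $49,50$ when $k=7$ ($48$ comes from Bose with $q=7$), for $64$--$68$ when $k=8$ ($63$ from Bose with $q=8$), for $85$--$88$ when $k=9$, for $107$--$109$ when $k=10$ ($110$ from Ruzsa with $p=11$), and for $135$--$144$ when $k=11$.
\item Nonexistence is needed for $22$, $32$--$34$, $44$--$47$ ($43$ falls to Bruck--Ryser--Chowla), $58$--$62$, $74$--$79$, $81$--$84$, $92$--$106$, and the $k=11$ values listed above.
\end{itemize}
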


\begin{lem}[\!\!\cite{Buratti2021}, Theorem 2.7]\label{Known-MGR2}
For any integer $k\geq 3$ and any integer $n\geq 3k^2-1$, there exists an $(n, k)$-MGR.
\end{lem}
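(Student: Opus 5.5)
The plan is to reduce the modular statement to an \emph{integer} Sidon (Golomb ruler) statement, and then obtain short integer rulers by cutting a window out of a known algebraic modular Golomb ruler.

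The reduction is as follows. Let $B=\{b_1<\dots<b_k\}$ be a set of integers with all differences $b_i-b_j$ ($i\neq j$) distinct, and let $D=b_k-b_1$ be its length. Then for every $n\ge 2D+1$ the reduction of $B$ modulo $n$ is an $(n,k)$-MGR. Indeed, all differences lie in $[-D,D]\setminus\{0\}$, an interval of length $2D<n$, so distinct nonzero integer differences remain distinct and nonzero modulo $n$. Since this gives an MGR for \emph{every} $n$ beyond a threshold, it suffices to produce, for each $k$, an integer ruler with $k$ marks and $2D+1\le 3k^2-1$, that is, length $D\le (3k^2-2)/2$.

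To get such a ruler I would start from a Bose $(q^2-1,q)$-MGR from Lemma~\ref{Prime-power}, with $q$ the smallest prime power satisfying $q\ge k$.
\begin{enumerate}
\item Place the $q$ marks on the circle $\mathbb{Z}_{q^2-1}$. Averaging over all rotations of an arc shows that some arc of length about $D\approx k(q^2-1)/q$ contains at least $k$ marks; keep $k$ of them.
\item If $D<(q^2-1)/2$, the circular differences of these $k$ marks lift to distinct integer differences. This holds because two integer differences congruent modulo $q^2-1$ and both lying in $(-D,D)$ must be equal, so the chosen marks form an integer Golomb ruler of length $D$.
\item The reduction then yields MGRs for all $n\ge 2D+1\approx 2kq$.
\end{enumerate}
One can refine the choice by using the Singer rulers (Lemma~\ref{MGR=DS}) or the Ruzsa rulers (Lemma~\ref{Prime}) when they give a shorter arc, and by optimizing over the multiplier automorphisms $x\mapsto tx$ of the ruler to find the shortest $k$-arc.

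The main obstacle is the constant $3$. The bound $2kq$ is at most $3k^2-1$ only when $q\le 1.5k$ roughly, which holds for large $k$ by known bounds on prime gaps (e.g.\ a prime always lies in $(k,1.2k]$ for $k\ge 25$). The averaging estimate of the arc also has boundary slack that must be controlled. So for all sufficiently large $k$ the analytic argument suffices, while small $k$ (say $k\le 25$) must be checked separately. For those I would argue by explicit computation: either exhibit short integer Golomb rulers, since optimal Golomb rulers are known to have length well below $1.5k^2$ for small $k$, or appeal directly to the tables in Lemma~\ref{Known-MGR} for $k\le 11$ and a computer search for the remaining values. I expect this finite verification, together with making the prime-gap threshold explicit, to be the delicate part; the reduction and the window argument themselves are routine.
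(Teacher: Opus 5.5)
The paper does not prove this lemma. It is quoted from Buratti and Stinson, so your argument has to stand on its own. Your overall route is sound: take $k$ marks of a Bose $(q^2-1,q)$-MGR, regard them as an integer Golomb ruler of length $L$, reduce modulo every $n\ge 2L+1$, and control $q$ by an explicit prime-gap theorem. Your first reduction is correct.

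Step 2, however, fails as you justify it, because its hypothesis $D<(q^2-1)/2$ is never met. Bertrand's postulate gives $q<2k$, so your arc length $D\approx kq$ always exceeds $(q^2-1)/2$. The implication you invoke there (congruent differences lying in a short interval are equal) is the one needed for the \emph{other} passage, from $\mathbb{Z}$ to $\mathbb{Z}_n$. For the lift from $\mathbb{Z}_{q^2-1}$ to $\mathbb{Z}$ you only need the trivial converse. If $b_i-b_j=b_l-b_m$ in $\mathbb{Z}$, then the two differences are congruent modulo $q^2-1$, which the MGR property forbids. So integer representatives of any $k$ marks form a Golomb ruler, whatever their spread.

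With that correction, the rest is a short computation, and you need neither the averaging nor a computer search. Take representatives in $[0,q^2-2]$; then $L\le q^2-2$, which gives an $(n,k)$-MGR for every $n\ge 2q^2-3$. The condition $2q^2-3\le 3k^2-1$ is equivalent to $q^2\le (3k^2+2)/2$.
\begin{itemize}
\item For $3\le k\le 24$, the least prime power $q\ge k$ satisfies $q\le 7k/6$. The extreme case is $k=6$, $q=7$, where $49\le 55$.
\item For $k\ge 25$, Nagura's theorem puts a prime in $(k,6k/5)$, so $q^2<1.44k^2$.
\end{itemize}
Your window argument would only lower the threshold to about $2kq$, which the constant $3$ does not require.
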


\begin{lem}\label{4nn}
Both an $(r\times n, n)$-NTA and an $(r\times (n-1), n)$-NTA exist for any parameters $r$ and $n$ listed below.

\vskip 6pt
\centering
\begin{tabular}{|c c|c c|}
\hline
$r=4$ & $n\geq 13$ & $r=5$ & $n\geq 23$ or $n=21$ \\
$r=6$ & $n\geq 35$ or $n=31$ & $r=7$ & $n\geq 48$ \\
$r=8$ & $n\geq 63$ or $n=57$ & $r=9$& $n\geq 85$ or $n=73,80$ \\
$r=10$ & $n\geq 107$ or $n=91$ & $r=11$ & $n\geq 135$ or $n=120,133$ \\
$\forall$ $r$ & $n\geq 3r^2-1$ &  &  \\\hline
\end{tabular}
\end{lem}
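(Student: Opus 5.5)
This lemma follows by direct combination of earlier results. Fix a pair $(r,n)$ from the table. First I produce an $(n,r)$-MGR for that pair. Then Lemma \ref{MGR-NTA} gives an $(r\times n,n)$-NTA. Finally Lemma \ref{cor-n-1} (equivalently, Lemma \ref{Del-Col} applied to the array just obtained) gives an $(r\times(n-1),n)$-NTA.

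The only hypothesis of Lemma \ref{Del-Col} to check is $r\le n$. In every row of the table $n\ge r^2-r+1>r$, since $r\ge 4$, so this holds trivially.

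The existence of the MGRs is read off directly:
\begin{itemize}
\item For $r=4$: $n\ge 13$ by Lemma \ref{Known-MGR}(2).
\item For $r=5$: $n=21$ or $n\ge 23$ by Lemma \ref{Known-MGR}(3).
\item For $r=6,\dots,11$: each listed set of $n$ coincides exactly with $\operatorname{MGR}(r)$ in Lemma \ref{Known-MGR}(4)--(9).
\end{itemize}
Note that $\operatorname{MGR}(7)$ contains no sporadic small value, which matches the table entry $n\ge 48$.

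For the last row (arbitrary $r$ with $n\ge 3r^2-1$), Lemma \ref{Known-MGR2} supplies an $(n,r)$-MGR whenever $r\ge3$. This suffices, since the paper only considers $r\ge 3$. The case $r\le 2$ is in any case outside the nontrivial range and can be noted as excluded by convention.

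There is no genuine obstacle. The only care needed is matching each table entry against the corresponding item of Lemma \ref{Known-MGR}, and recording that $r\le n$ so that Lemma \ref{Del-Col} applies.
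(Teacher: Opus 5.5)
Your proposal is correct and follows the paper's proof: take the $(n,r)$-MGR from Lemma~\ref{Known-MGR} or Lemma~\ref{Known-MGR2}, apply Lemma~\ref{MGR-NTA} to get the $(r\times n,n)$-NTA, then delete a column via Lemma~\ref{Del-Col}. Your entry-by-entry match against the MGR spectra and your check that $r\le n$ are the routine verifications the paper leaves implicit.
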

\begin{proof}
By Lemma \ref{MGR-NTA} and Lemmas \ref{Known-MGR}-\ref{Known-MGR2}, an $(r\times n, n)$-NTA can be obtained. It then follows directly from Lemma \ref{Del-Col} that an $(r\times (n-1), n)$-NTA also exists.
\end{proof}

Combining Lemma \ref{MGR-NTA} and Lemma \ref{Known-MGR}, we derive the following lemma.
\begin{lem}\label{4n-2-n}
An $(r\times (n-2), n)$-NTA exists for any parameters $r$ and $n$ listed below.

\vskip 6pt
\centering
\begin{tabular}{|c c|c c|}
\hline
$r=4$ & $n\geq 14$ & $r=5$ & $n\geq 23$ \\
$r=6$ & $n\geq 35$ & $r=7$ & $n\geq 48$ \\
$r=8$ & $n\geq 63$ & $r=9$& $n\geq 85$ or $n=80$ \\
$r=10$ & $n\geq 107$ & $r=11$ & $n\geq 135$ or $n=120,133$ \\
$\forall$ $r$ & $n\geq 3r^2-1$ &  &  \\\hline
\end{tabular}
\end{lem}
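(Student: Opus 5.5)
The statement follows by feeding the known existence results for modular Golomb rulers into the second half of Lemma \ref{MGR-NTA}, which is applicable only when the ruler is not a perfect difference set, i.e.\ when $n>k(k-1)+1$. So for each pair $(r,n)$ in the table I will identify an $(n,r)$-MGR and check that strict inequality.

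For the specific rows $r=4,\dots,11$, take $k=r$ and read $\operatorname{MGR}(r)$ from Lemma \ref{Known-MGR}. I will drop every value $n=r^2-r+1$, since there the ruler is a cyclic $(r^2-r+1,r,1)$-difference set and Lemma \ref{MGR-NTA} gives nothing. The dropped values are $13,21,31,57,73,91$ for $r=4,5,6,8,9,10$. For $r=7$ and $r=11$ the corresponding values $43$ and $111$ are not in $\operatorname{MGR}(r)$ anyway.

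Every remaining member of $\operatorname{MGR}(r)$ exceeds $r(r-1)+1$, which I will verify row by row.
\begin{itemize}
\item $r=4$: $n\ge 14>13$.
\item $r=5$: $n\ge 23>21$.
\item $r=6$: $n\ge 35>31$.
\item $r=7$: $n\ge 48>43$.
\item $r=8$: $n\ge 63>57$.
\item $r=9$: $n=80$ or $n\ge 85$, both $>73$.
\item $r=10$: $n\ge 107>91$.
\item $r=11$: $n\in\{120,133\}$ or $n\ge 135$, all $>111$.
\end{itemize}
For each such $n$, the second part of Lemma \ref{MGR-NTA} then yields an $(r\times (n-2),n)$-NTA. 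This reproduces exactly the table entries.

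For the general row, take any $r\ge 3$ and $n\ge 3r^2-1$; Lemma \ref{Known-MGR2} provides an $(n,r)$-MGR. Since $3r^2-1>r^2-r+1$ for all $r\ge 1$ (the difference is $2r^2+r-2>0$), the hypothesis $n>r(r-1)+1$ holds. Lemma \ref{MGR-NTA} again gives the required $(r\times(n-2),n)$-NTA.

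There is no real obstacle here. The only care needed is to exclude the difference-set values $n=r^2-r+1$, where every two columns meet and no disjoint pair of columns exists to delete. Excluding them explains why $n=13,21,31,57,73,91$ are absent from the table, even though they appeared in Lemma \ref{4nn}.
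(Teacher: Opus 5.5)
Your proof is correct and follows the paper's route: the paper obtains this lemma exactly by applying the second part of Lemma~\ref{MGR-NTA} to the rulers supplied by Lemma~\ref{Known-MGR}, which forces the difference-set values $n=r^2-r+1$ to be discarded. Your explicit appeal to Lemma~\ref{Known-MGR2} (with $r\ge 3$) for the general row is a useful addition, since the paper's one-line justification cites only Lemma~\ref{Known-MGR} even though that row needs Lemma~\ref{Known-MGR2}.
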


\subsection{Direct constructions}

The following lemma, which directly generalizes Lemma 5.12 in \cite{Gordeev2026}, is clearly inspired by that result.

\begin{lem}\label{direct-Con}
(1) There exists a $(4\times 3k,4k+\eta)$-NTA for
$\eta\in\{0,1,2\}$ and every $k\geq5$ with $k\neq6$.

(2) There exists a $(4\times(3k+1),4k+2)$-NTA for every
$k\geq5$ with $k\neq6$.
\end{lem}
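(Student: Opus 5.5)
The plan is a cyclic direct construction in the spirit of Lemma 5.12 of \cite{Gordeev2026}. For part (1) with $\eta=0$ I take the symbol set $\mathbb{Z}_k\times\{0,1,2,3\}$ and index the $3k$ columns by $\mathbb{Z}_k\times\{0,1,2\}$. I then choose three base columns $B_0,B_1,B_2$, each a $4\times1$ vector whose entries are symbols $(a,s)$, and develop them additively in the first coordinate modulo $k$, keeping the row positions fixed.

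First I record the target parameters. For $\eta=0$ we have $e=3$, so $\lambda_{rc}=3$, $\lambda_{rr}=2k$ and $\lambda_{cc}=8/(3k-1)<1$. Hence every symbol occurs exactly $3$ times, every row meets every column in exactly $3$ symbols, every two rows share exactly $2k$ symbols, and every two columns share at most one symbol.

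The base columns are designed so that each symbol class $s\in\{0,1,2,3\}$ occupies exactly three of the four rows, with the missing row equal to $s$. Then each class contributes $k$ symbols per row and lies in $3$ rows. Any pair of rows is covered by exactly two of the four triples, which gives row intersections $2k$. In each row the $k$ symbols of each of the three admissible classes then appear exactly once. Each base column should also contain symbols from every class other than the class of the row in which they sit. This makes the row--column count a fixed, row-independent bookkeeping condition, which I will arrange to equal $3$ by choosing which class appears in which cell.

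The main condition, and the main obstacle, is $\lambda_{cc}\le1$. Two developed columns share two symbols exactly when some pair of classes $(s,t)$ has a repeated difference among the first coordinates. So I need the multiset of ``mixed differences'' $a-a'$, taken over pairs of cells of base columns carrying the same class, to have no repetitions modulo $k$. This is a small difference-packing problem. I would solve it with explicit linear formulas in $k$, for example entries $0,1,3$ and their negatives, verified for all $k\ge5$. The failure at $k=6$ presumably reflects an unavoidable collision of differences modulo $6$, so that case is excluded and treated elsewhere. Binarity within rows is automatic from the development, and within columns it needs distinct entries in each base column.

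For $\eta=1,2$, I add one or two new symbols $\infty_1,\infty_2$ and replace a carefully chosen set of cells: $3$ cells per infinity, lying in distinct rows and distinct columns, whose original symbols are distinct. This leaves those symbols with $2$ occurrences, consistent with Lemma 2.1 ($v^+=8k-\ldots$). I then recheck the conditions against Lemma \ref{Nec-par}: the row--column intersections must lie in $\{2,3\}$, the row intersections in $\{\lambda_{rr}^\pm\}$, and the column intersections stay at most $1$ provided each $\infty$ sits in columns that are pairwise disjoint after replacement.

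For part (2), with $v=4k+2$ and $3k+1$ columns, I append one extra column containing $\infty_1,\infty_2$ together with two suitable old symbols. I then adjust as in the $\eta=2$ case, checking the same three intersection conditions.
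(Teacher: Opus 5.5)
Your $\eta=0$ construction is the paper's, in different notation. The paper puts $4a+b$ (your $(a,b)$) in row $(b+p)\bmod 4+1$ of column $x\equiv a+pb \pmod k$ of group $p$. So each class misses exactly one row, and each column contains each class exactly once; this is what makes every row--column intersection equal to $3$. The condition $\lambda_{cc}\le1$ then reduces to the values $(p-q)b$, $0\le b\le 3$, being distinct modulo $k$, which for $k\ge5$ fails only at $k=6$. Leaving the base columns unspecified is harmless there. Your guess that $k=6$ is an unavoidable collision is wrong, though. Giving class $s$ the first coordinates $0$, $s$ and $(0,2,4,1)_s$ in the three groups yields distinct differences modulo every $k\ge5$, and the paper's array $A$ in Example~\ref{smallexample2} is exactly such a repair.

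The real gap is everything after $\eta=0$. The lemma is existential, so for $\eta=1,2$ and part (2) the proof \emph{is} an explicit choice of replaced cells plus its verification for all $k\ge5$, $k\ne6$. You only list the conditions to be checked, and they do not take care of themselves.

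\emph{The case $\eta=2$.} Here $\lambda_{rr}=2k-1\in\mathbb{Z}$, so every pair of rows must lose exactly one common symbol. With three cells per $\infty$, the natural choice (for each $\infty$, three symbols of one class in the three rows containing that class) gives some pairs $2k-2$ and others $2k$. So a less symmetric configuration is forced.

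\emph{Row--column intersections.} A row that loses two symbols lying in a common unmodified column meets that column in only one symbol. A row and a column that both receive $\infty$ in different cells meet in four symbols unless a removed symbol compensates. Controlling this is the actual work in the paper. Its choice for $T_2$ (replacing $9$ at $(2,3)$ and $11$ at $(1,(5\bmod k)+k+1)$) genuinely fails at $k=8$ and has to be changed.

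\emph{Part (2).} It cannot be done ``as in the $\eta=2$ case''. With $\infty_1,\infty_2$ already in the new column, three further cells each would give them four occurrences. But $v^-=2$ and $v^+=4k$ allow each $\infty$ at most two old cells. They also force each old symbol of the new column to give up one old occurrence, and require exactly two pairs of rows to meet in $2k+1$ symbols. The paper uses $0\mapsto 4k$ at $(1,1)$ and $10\mapsto 4k+1$ at $(3,3)$, with new column $(0,4k,10,4k+1)^{T}$. It then checks that no two entries of that column share an old column.

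Finally, $v^+$ is $4k-2$ for $\eta=1$ and $4k-4$ for $\eta=2$, not $8k-\dots$. Your replication counts are consistent with these values, so the three-cell scheme may well be completable, but it must be exhibited and checked.
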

\begin{proof}
(1) Let $T=(t_{i,j})$, $1\leq i\leq 4$, $1\leq j\leq 3k$, be a $4\times3k$ row-column design on the point set $X=\mathbb Z_{4k}=\{0,1,\ldots,4k-1\}$.
For $0\leq a<k$ and $0\leq b<4$, place the symbol $4a+b$ in the following three cells: $(b+1,a+1)$, $((b+1)\pmod 4+1, (a+b)\pmod k+k+1)$, and $((b+2)\pmod 4+1, (a+2b)\pmod k+2k+1)$. Thus, for each symbol $4a+b$, its three occurrences lie in the three column groups
\begin{center}
$C_0=\{1,\ldots,k\},\ C_1=\{k+1,\ldots,2k\}, \
C_2=\{2k+1,\ldots,3k\}$.
\end{center}
Clearly, every symbol occurs exactly once in each of the three
column groups, and hence exactly three times in $T$. Therefore, $T$
is equireplicate, and each symbol appears at most once in each column.
Let $\mathcal R_i$, $1\leq i\leq 4$, denote the set of elements in the $i$-th row.
Then
\begin{center}
$\mathcal R_1=\{4a+l\mid0\leq a<k,\ l=0,2,3\}$,\ \
$\mathcal R_2=\{4a+l\mid0\leq a<k,\ l=0,1,3\}$,
\end{center}
\begin{center}
$\mathcal R_3=\{4a+l\mid0\leq a<k,\ l=0,1,2\}$,\ \
$\mathcal R_4=\{4a+l\mid0\leq a<k,\ l=1,2,3\}$.
\end{center}
It follows that $|\mathcal R_i|=3k$ for each $1\leq i\leq 4$. Thus, each symbol appears at most once in each row, and hence $T$ is binary. It is straightforward to verify that $|\mathcal R_{i_1}\cap \mathcal R_{i_2}|=2k$ for any $1\leq i_1\neq i_2\leq 4$.

We now verify the intersections between columns. To facilitate computations modulo
$k$, identify the columns in each group with
$\mathbb Z_k$. Then the three columns containing $4a+b$ are
\begin{center}
$a,\ \ (a+b)\pmod k,\ \ (a+2b)\pmod k$.
\end{center}
Let $C_{p,x}$ denote the column corresponding to $x\in\mathbb Z_k$
in the $p$-th column group, where $p\in\{0,1,2\}$. Then
\begin{center}
$4a+b\in C_{p,x}
\quad\Longleftrightarrow\quad
a+pb\equiv x\pmod k$.
\end{center}
Consequently, if $4a+b\in C_{p,x}\cap C_{q,y}$, then
\begin{center}
$a+pb\equiv x\pmod k,\ \ a+qb\equiv y\pmod k$.
\end{center}
Subtracting these congruences gives $(p-q)b\equiv x-y\pmod k$.
We consider the three possible pairs of distinct column groups.

If $\{p,q\}=\{0,1\}$, then $b\equiv \pm(x-y)\pmod k$.
Since $b\in\{0,1,2,3\}$ and $k\ge5$, there is at most one possible
value of $b$ satisfying $b\equiv \pm(x-y)\pmod k$. Once $b$ is determined, the congruence $a+pb\equiv x\pmod k$ uniquely determines $a$. Therefore, $|C_{p,x}\cap C_{q,y}|\le1$.

Similarly, if $\{p,q\}=\{1,2\}$, then we also have $b\equiv \pm(x-y)\pmod k$,
and hence $|C_{p,x}\cap C_{q,y}|\le1$.

It remains to consider $\{p,q\}=\{0,2\}$. In this case, the above system of congruences yields $2b\equiv \pm(y- x)\pmod k$. Suppose that two distinct symbols $4a+b$ and $4a'+b'$ belong to
both $C_{0,x}$ and $C_{2,y}$. Then $2b\equiv2b'\pmod k$. Since $b,b'\in\{0,1,2,3\}$ and $b\neq b'$, we have $k\mid2(b-b')$. Now $0<|2(b-b')|\le6$.
Because $k\ge5$, the only possible value of $k$ dividing
$2(b-b')$ is $k=6$. Indeed, for $k=6$, taking $|b-b'|=3$ gives $2(b-b')\equiv0\pmod6$. For every $k\ge5$, $k\neq6$, this is impossible. Hence
$|C_{0,x}\cap C_{2,y}|\le1$ whenever the two columns are distinct.

Two distinct columns belonging to the same column group
cannot contain a common symbol, because every symbol occurs exactly
once in each column group. Thus, any two distinct columns have at most one common symbol.

Finally, we prove that every row and every column of \(T\) have exactly three common symbols. Fix a symbol $\alpha=4a+b$, where $0\le a<k,\ 0\le b<4$.
By the construction, \(\alpha\) occurs in the following three rows:
$b+1,\ (b+1)\pmod 4+1,\ (b+2)\pmod 4+1$. More explicitly,
$$\begin{array}{c|c}
b & \text{rows containing }4a+b\\ \hline
0 & 1,2,3\\
1 & 2,3,4\\
2 & 3,4,1\\
3 & 4,1,2
\end{array}$$
Hence, each symbol occurs in exactly three rows and is absent from exactly one row. The unique row in which \(4a+b\) is absent is
$$\begin{cases}
4,&b=0,\\
1,&b=1,\\
2,&b=2,\\
3,&b=3.
\end{cases}$$
Thus, as \(b\) runs through \(0,1,2,3\), the four symbols \(4a+b\) are respectively absent from the four different rows.
Now fix an arbitrary column \(C_{p,x}\), where \(p\in\{0,1,2\}\) and \(x\in\mathbb Z_k\). From the construction, the symbol \(4a+b\) occurs in \(C_{p,x}\) precisely when $a+pb\equiv x\pmod k$. In fact, for the first group, that is, \(p=0\), the column coordinate is \(a\); for the second group \(p=1\), the column coordinate is \(a+b\); for the third group \(p=2\), the column coordinate is \(a+2b\).
For each fixed \(b\in\{0,1,2,3\}\), the congruence
$a+pb\equiv x\pmod k$ has a unique solution
$a\equiv x-pb\pmod k$.
Therefore, the column \(C_{p,x}\) contains exactly one symbol corresponding to each value of \(b\). In other words, there are exactly four symbols in \(C_{p,x}\), say
$x_b=4a_b+b,\ b=0,1,2,3$, where $a_b\equiv x-pb\pmod k$.
Hence, if \(\mathcal{R}_i\) is any fixed row, exactly one of the four symbols in \(C_{p,x}\) is absent from \(\mathcal{R}_i\). The other three symbols occur in \(\mathcal{R}_i\).
Therefore, $|\mathcal{R}_i\cap C_{p,x}|=4-1=3$, we conclude that every row and every column have exactly three common symbols. This proves the required row-column intersection condition.
It follows that, for every $k\ge5$, $k\neq6$, $T$ is a $(4\times3k,4k)$-NTA.

We next enlarge the symbol set by one symbol.

Let $T_1$ be obtained from $T$ by replacing the symbol $0$ in cell $(1,1)$
and the symbol $2$ in cell $(4,k+3)$ with a new symbol $4k$. Clearly,
$T_1$ is binary. The new symbol $4k$ occurs twice, whereas the symbols $0$ and $2$
occur twice and every other old symbol occurs three times. Thus,
$T_1$ is near equireplicate. The two columns containing these cells have exactly one common symbol in $T$, namely $2$; so replacing this occurrence of $2$
by $4k$ preserves the property that any two columns have at most
one common symbol. Similarly, the replacement of the occurrence
of $0$ does not create any new column intersection of size greater
than one. The elements in the first column of $T_1$ are $4k,1,2,3$ and
the elements in the $(k+3)$-th column of $T_1$ are $4k-1,4k,5,8$. It follows that
they intersect in exactly one element, that is, the number of intersections is $1$.
Therefore, the number of intersections between any two columns is either $0$ or $1$.

The modification changes the row intersections only by one, and a
direct inspection gives $|\mathcal R_1\cap\mathcal R_2|=|\mathcal R_1\cap\mathcal R_3|=|\mathcal R_3\cap\mathcal R_4|=2k-1$, while $|\mathcal R_2\cap\mathcal R_3|=|\mathcal R_1\cap\mathcal R_4|=|\mathcal R_2\cap\mathcal R_4|=2k$.
Thus the two possible row intersection numbers differ by one. The
row-column intersections remain equal to either two or three,
and the column-pair intersections are still at most one.
Consequently, $T_1$ is a $(4\times3k,4k+1)$-NTA.

Now obtain $T_2$ from $T_1$ by replacing the symbol $9$ in cell
$(2,3)$ and the symbol $11$ in cell $\bigl(1,(5\bmod k)+k+1\bigr)$
by a new symbol $4k+1$. The two selected cells lie in columns $3$ and $(5\bmod k)+k+1$, respectively. Denote a column by \((p,x)\), where \(p\in\{0,1,2\}\) and \(x\in\mathbb Z_k\); here \(p\) is the column group starting from \(0\), and \(x\) is the coordinate within the group. Compared with \(T\), the first row of \(T_2\) loses \(0\) and \(11\), the second row loses \(9\), and the fourth row loses \(2\); the new symbol \(4k\) lies in rows \(1\) and \(4\), and \(4k+1\) lies in rows \(1\) and \(2\). Only the first row may have its intersection number reduced to \(1\) because a single column contains two deleted symbols.

The column supports of symbols \(0\) and \(11\) in \(T\) are respectively
\begin{center}
$\{(0,0),(1,0),(2,0)\},\
\{(0,2),(1,5\bmod k),(2,8\bmod k)\}$.
\end{center}
For the two to intersect requires \(k\mid 5\), or \(k\mid 8\). Since \(k\ge 5\) and \(k\ne 6\), this is possible only for \(k=5\) or \(8\). For $k=5$, one can check that the array given in Example 1, obtained by the above construction, is indeed a $(4\times 15,20)$-NTA. However, for $k=8$,
note that the set of elements in the $17$-th column of $T_2$ is $\{18,11,0,25\}$, and
$0,11,25$ are not in the first row, so the first row and the $17$-th column intersect in exactly one element. This does not satisfy the row-column intersection condition. So for this case, we reconstruct $T_2$ as follows: obtain $T_2$ from $T_1$ by replacing the symbol $5$ in cell $(2,2)$ and the symbol $7$ in cell $(1,13)$ with a new symbol $33$. Then one can easily check that $T_2$ is a $(4\times 24,34)$-NTA. For the remaining values of $k$, the two columns $3$ and $(5\bmod k)+k+1$ have exactly one common symbol, namely $11$, so the replacement preserves the at most one
intersection property for pairs of columns. Again, the new symbol $4k+1$ occurs twice, the symbols $9$ and $11$ occur twice, together with the previous symbols $0,2,4k$ occur twice, and all other symbols occur three times. Hence $T_2$ is near equireplicate. The row-row and row-column intersection
conditions are preserved by the choice of the two cells. Therefore, $T_2$ is a $(4\times3k,4k+2)$-NTA.

(2) Let $T'=(t'_{i,j})$ be obtained from $T$ by first replacing the symbol $0$ in cell $(1,1)$ with the symbol
$4k$ and the symbol $10$ in cell $(3,3)$ with the symbol $4k+1$, and  then adding a column $3k+1$.
The entries in the new column are chosen so that the row intersection numbers
become the two required values. In particular, choose four
distinct symbols $0,4k,10,4k+1$ and put
\begin{center}
$t'_{1,3k+1}=0,\
t'_{2,3k+1}=4k,\
t'_{3,3k+1}=10,\
t'_{4,3k+1}=4k+1$.
\end{center}
For the present construction, it can be readily verified that $T'$ is near equireplicate and binary.  Let $\mathcal R_i'$, $1\leq i\leq 4$, denote the set of elements in the $i$-th row, and $\mathcal C_j$, $1\leq j\leq 3k+1$, denote the set of elements in the $j$-th column. Then
\begin{center}
$\mathcal R_1'=\{4a+l\mid0\leq a<k,\ l=0,2,3\}\cup\{4k\}$,\ \
$\mathcal R_2'=\{4a+l\mid0\leq a<k,\ l=0,1,3\}\cup\{4k\}$, \ \
$\mathcal R_3'=\{4a+l\mid0\leq a<k,\ l=0,1,2\}\cup\{4k+1\}$,\ \
$\mathcal R_4'=\{4a+l\mid0\leq a<k,\ l=1,2,3\}\cup\{4k+1\}$.
\end{center}
It follows that
\begin{center}
$|\mathcal R_1'\cap\mathcal R_2'|
=
|\mathcal R_3'\cap\mathcal R_4'|
=2k+1$,
\end{center}
and
\begin{center}
$|\mathcal R_1'\cap\mathcal R_3'|
=|\mathcal R_1'\cap\mathcal R_4'|
=|\mathcal R_2'\cap\mathcal R_3'|
=|\mathcal R_2'\cap\mathcal R_4'|
=2k$.
\end{center}
By construction and the column-column intersection property of $T$, any two of the first $3k$ columns intersect in either $0$ or $1$ common element. Hence, it remains only to examine the intersection of the last column with any one of the first $3k$ columns. Clearly, $\mathcal{C}_{3k+1}=\{0,4k,10,4k+1\}$, $4k$ occurs in the first column, $4k+1$ in the third, $0$ in columns $k+1$ and $2k+1$, and $10$ in columns $k+5$ and $(6 \pmod k)+2k+1$. Note that $k\geq 5$ and $k\neq 6$. Therefore, no two elements of $\mathcal{C}_{3k+1}$ occur in the same column. Consequently, the last column intersects any one of the first $3k$ columns in either $0$ or $1$ common element. This shows that any two columns of $T'$ have $0$ or $1$ common symbol.

It remains to verify the row-column intersection numbers. Every row and every column of \(T\) have exactly three common symbols. By construction, it is straightforward to check that $|\mathcal R_1'\cap \mathcal C_3|=|\mathcal R_2'\cap \mathcal C_{3k+1}|=|\mathcal R_3'\cap \mathcal C_1|=|\mathcal R_4'\cap \mathcal C_{3k+1}|=2$.
Apart from these exceptions, every other pair consisting of a row and a column intersects in exactly three elements.

Therefore, $T'$ is a $(4\times(3k+1),4k+2)$-NTA.
This completes the proof.
\end{proof}

To illustrate the proof of Lemma \ref{direct-Con}, we present the following example.
\begin{exam}\label{smallexample1}
There exist a $(4\times 15,20+\eta)$-NTA for $\eta\in\{0,1,2\}$ and a $(4\times 16,22)$-NTA.
\end{exam}
\begin{proof}
Following the construction described in Lemma \ref{direct-Con}, we present the required near triple arrays below.

\vskip 6pt
\footnotesize{}
\mbox{}\hspace{0.002in}
$T=$ \begin{tabular}{|c c c c c | c c c c c | c c c c c |}
    \hline
    0 & 4 & 8 & 12 & 16 & 11 & 15 & 19 & 3 & 7 & 6 & 10 & 14 & 18 & 2 \\
    1 & 5 & 9 & 13 & 17 & 0 & 4 & 8 & 12 & 16 & 19 & 3 & 7 & 11 & 15  \\
    2 & 6 & 10 & 14 & 18 & 17 & 1 & 5 & 9 & 13 & 0 & 4 & 8 & 12 & 16  \\
    3 & 7 & 11 & 15 & 19 & 14 & 18 & 2 & 6 & 10 & 13 & 17 & 1 & 5 & 9 \\
    \hline
    \end{tabular}

\vskip 6pt
$T_1=$    \begin{tabular}{|c c c c c | c c c c c | c c c c c |}
    \hline
    \textcolor{red}{20} & 4 & 8 & 12 & 16 & 11 & 15 & 19 & 3 & 7 & 6 & 10 & 14 & 18 & 2 \\
    1 & 5 & 9 & 13 & 17 & 0 & 4 & 8 & 12 & 16 & 19 & 3 & 7 & 11 & 15  \\
    2 & 6 & 10 & 14 & 18 & 17 & 1 & 5 & 9 & 13 & 0 & 4 & 8 & 12 & 16  \\
    3 & 7 & 11 & 15 & 19 & 14 & 18 & \textcolor{red}{20} & 6 & 10 & 13 & 17 & 1 & 5 & 9 \\
    \hline
    \end{tabular}

\vskip 6pt
$T_2=$
    \begin{tabular}{|c c c c c | c c c c c | c c c c c |}
    \hline
\textcolor{red}{20} & 4 & 8 & 12 & 16 & \textcolor{blue}{21} & 15 & 19 & 3 & 7 & 6 & 10 & 14 & 18 & 2 \\
1 & 5 & \textcolor{blue}{21} & 13 & 17 & 0 & 4 & 8 & 12 & 16 & 19 & 3 & 7 & 11 & 15  \\
 2 & 6 & 10 & 14 & 18 & 17 & 1 & 5 & 9 & 13 & 0 & 4 & 8 & 12 & 16  \\
 3 & 7 & 11 & 15 & 19 & 14 & 18 & \textcolor{red}{20} & 6 & 10 & 13 & 17 & 1 & 5 & 9 \\
    \hline
    \end{tabular}

\vskip 6pt
$T'=$
    \begin{tabular}{|c c c c c | c c c c c | c c c c c | c |}
    \hline
    \textcolor{red}{20} & 4 & 8 & 12 & 16 & 11 & 15 & 19 & 3 & 7 & 6 & 10 & 14 & 18 & 2 & \textcolor{red}{0} \\
    1 & 5 & 9 & 13 & 17 & 0 & 4 & 8 & 12 & 16 & 19 & 3 & 7 & 11 & 15 & \textcolor{blue}{20}  \\
    2 & 6 & \textcolor{red}{21} & 14 & 18 & 17 & 1 & 5 & 9 & 13 & 0 & 4 & 8 & 12 & 16 & \textcolor{red}{10} \\
    3 & 7 & 11 & 15 & 19 & 14 & 18 & 2 & 6 & 10 & 13 & 17 & 1 & 5 & 9 & \textcolor{blue}{21} \\
    \hline
    \end{tabular}

 \normalsize{}
\vskip 6pt
\noindent
It is straightforward to verify that $T$, $T_1$ and $T_2$ are the $(4\times 15,20+\eta)$-NTAs for $\eta\in\{0,1,2\}$, respectively, and $T'$ is a $(4\times 16,22)$-NTA.
\end{proof}

When $k=6$, the $(4\times 18,24)$-NTA cannot be constructed via Lemma \ref{direct-Con}. By Lemma \ref{Nec-par}, we readily see that $\lambda_{cc}\in[0,1]$.
However, as shown in the proof of Lemma \ref{direct-Con}, we can list a
$4\times 18$ row-column design on $24$ symbols as follows.

\vskip 6pt
\begin{tabular}{|cccccc | cccccc | cccccc |}\hline
\textcolor{red}{0} & \textcolor{red}{4} & \textcolor{red}{8} & \textcolor{red}{12} & \textcolor{red}{16} & \textcolor{red}{20} & 15 & 19 & 23 & 3 & 7 & 11 & 10& 14 & 18 & 22 & 2 &6 \\
1 & 5 & 9 & 13 & 17 & 21 & 0 & 4 & 8 & 12 & 16 & 20 & \textcolor{red}{3}& \textcolor{red}{7}& \textcolor{red}{11}& \textcolor{red}{15}& \textcolor{red}{19}& \textcolor{red}{23}\\
2 & 6 & 10 & 14 & 18 & 22 & 21 & 1 & 5 & 9 & 13 & 17 & \textcolor{red}{0} & \textcolor{red}{4} & \textcolor{red}{8} & \textcolor{red}{12} & \textcolor{red}{16} & \textcolor{red}{20} \\
\textcolor{red}{3} & \textcolor{red}{7} & \textcolor{red}{11} & \textcolor{red}{15} & \textcolor{red}{19} & \textcolor{red}{23} & 18 & 22 & 2 & 6 & 10 & 14 & 17 & 21 & 1 & 5 & 9 & 13 \\
\hline
\end{tabular}

\vskip 6pt
\noindent
It is easy to verify that any two rows intersect in $12$ elements, and that every row and every column intersect in $3$ elements, whereas any two columns intersect in $0,1$, or $2$ elements. For example, the first and second columns intersect in $0$ elements, the first and $7$-th columns intersect in $1$ element, and the first and $13$-th columns intersect in $2$ elements. Thus, it is not a near triple array.
For this reason, we will adjust its elements so that it satisfies the NTA conditions, thereby obtaining a $(4 \times 18, 24)$-NTA.

\begin{exam}\label{smallexample2}
There exist a $(4 \times 18, 24+\eta)$-NTA for $\eta\in\{0,1,2\}$ and a $(4\times 19, 26)$-NTA.
\end{exam}
\begin{proof}
Let

\footnotesize{}
\vskip 6pt
\noindent
\mbox{}\hspace{0.002in}
$A=$
\begin{tabular}{|cccccc | cccccc | cccccc |}
    \hline
    0 & 4 & 8 & 12 & 16 & 20 & 15 & 19 & 23 & 3 & 7 & 11 & \textcolor{blue}{6} & \textcolor{blue}{10} & \textcolor{blue}{14} & \textcolor{blue}{18} & \textcolor{blue}{22} & \textcolor{blue}{2} \\
    1 & 5 & 9 & 13 & 17 & 21 & 0 & 4 & 8 & 12 & 16 & 20 & \textcolor{blue}{23} & \textcolor{blue}{3} & \textcolor{blue}{7} & \textcolor{blue}{11} & \textcolor{blue}{15} & \textcolor{blue}{19} \\
    2 & 6 & 10 & 14 & 18 & 22 & 21 & 1 & 5 & 9 & 13 & 17 & 0 & 4 & 8 & 12 & 16 & 20 \\
    3 & 7 & 11 & 15 & 19 & 23 & 18 & 22 & 2 & 6 & 10 & 14 & 17 & 21 & 1 & 5 & 9 & 13 \\
    \hline
    \end{tabular}

\vskip 6pt
\noindent
$A_1=$
    \begin{tabular}{|cccccc | cccccc | cccccc |}
    \hline
    24 & 4 & 8 & 12 & 16 & 20 & 15 & 19 & 23 & 3 & 7 & 11 & 6 & 10 & 14 & 18 & 22 & 2 \\
    1 & 5 & 9 & 13 & 17 & 21 & 0 & 4 & 8 & 12 & 16 & 20 & 23 & 3 & 7 & 11 & 15 & 19 \\
    2 & 6 & 10 & 14 & 18 & 22 & 21 & 1 & 5 & 9 & 13 & 17 & 0 & 4 & 8 & 12 & 16 & 20 \\
    3 & 7 & 11 & 15 & 19 & 23 & 18 & 22 & 24 & 6 & 10 & 14 & 17 & 21 & 1 & 5 & 9 & 13 \\
    \hline
    \end{tabular}

\vskip 6pt
\noindent
$A_2=$       \begin{tabular}{|cccccc | cccccc | cccccc |}
    \hline
    24 & 4 & 8 & 12 & 16 & 20 & 15 & 19 & 23 & 3 & 7 & 25 & 6 & 10 & 14 & 18 & 22 & 2 \\
    1 & 5 & 25 & 13 & 17 & 21 & 0 & 4 & 8 & 12 & 16 & 20 & 23 & 3 & 7 & 11 & 15 & 19 \\
    2 & 6 & 10 & 14 & 18 & 22 & 21 & 1 & 5 & 9 & 13 & 17 & 0 & 4 & 8 & 12 & 16 & 20 \\
    3 & 7 & 11 & 15 & 19 & 23 & 18 & 22 & 24 & 6 & 10 & 14 & 17 & 21 & 1 & 5 & 9 & 13 \\
    \hline
    \end{tabular}

\vskip 6pt
\noindent
$A'=$ \begin{tabular}{|cccccc | cccccc | cccccc | c |}
    \hline
    24 & 4 & 8 & 12 & 16 & 20 & 15 & 19 & 23 & 3 & 7 & 11 & 6 & 10 & 14 & 18 & 22 & 2 & 0 \\
    1 & 5 & 9 & 13 & 17 & 21 & 0 & 4 & 8 & 12 & 16 & 20 & 23 & 3 & 7 & 11 & 15 & 19 & 24 \\
    2 & 6 & 25 & 14 & 18 & 22 & 21 & 1 & 5 & 9 & 13 & 17 & 0 & 4 & 8 & 12 & 16 & 20 & 10 \\
    3 & 7 & 11 & 15 & 19 & 23 & 18 & 22 & 2 & 6 & 10 & 14 & 17 & 21 & 1 & 5 & 9 & 13 & 25 \\
    \hline
    \end{tabular}

 \normalsize{}
\vskip 6pt
\noindent
It is straightforward to verify that $A$, $A_1$ and $A_2$ are $(4\times 18,24+\eta)$-NTAs for $\eta\in\{0,1,2\}$ respectively, and $A'$ is a $(4\times 19,26)$-NTA.
\end{proof}

The following lemma directly generalizes Lemma 5.12 in \cite{Gordeev2026}, by which it was clearly inspired. We then extend the result of Lemma \ref{direct-Con} to the general case, with the conclusion stated below.

\begin{lem}
There exists an $(m \times (m-1)k, mk)$-NTA for any $m\geq 3$ and $k\geq 1+(m-1)(m-2)$.
\end{lem}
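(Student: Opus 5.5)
Generalize the construction of Lemma \ref{direct-Con}(1). Take the symbol set $\mathbb{Z}_k\times\mathbb{Z}_m$, writing symbols as $ma+b$ with $0\le a<k$ and $0\le b<m$. Split the $(m-1)k$ columns into $m-1$ groups $C_0,\dots,C_{m-2}$, each of size $k$ and indexed by $\mathbb{Z}_k$. For $p\in\{0,\dots,m-2\}$, place symbol $ma+b$ in group $C_p$, in column $x\equiv a+pb\pmod k$ and row $(b+p)\bmod m+1$.

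\textbf{Binary and equireplicate.} Each symbol then occurs once in every group, so exactly $m-1$ times, and $e=m-1$. Within each group, the map $(a,b)\mapsto(\text{column},\text{row})=(a+pb,\,b+p)$ is a bijection onto the $k\times m$ cells of that group. Hence every cell is filled exactly once, and each group contains each symbol once. The rows visited by $ma+b$ are $b+p$ for $p=0,\dots,m-2$, which are distinct mod $m$. So no symbol repeats in a row, and $T$ is binary.

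\textbf{Row and row-column intersections.} Symbol $ma+b$ misses exactly one row, namely row $(b+m-1)\bmod m+1$, which depends only on $b$. Row $i$ therefore consists of all $ma+b$ with $b\ne b_i$ for one fixed residue $b_i$. This gives
\[
|\mathcal R_i|=(m-1)k,\qquad |\mathcal R_i\cap\mathcal R_j|=(m-2)k=\lambda_{rr},
\]
which agrees with Lemma \ref{Nec-par}, since $\lambda_{rr}=(m-1)k(m-2)/(m-1)$. Fix a column $C_{p,x}$. For each $b$ the congruence $a\equiv x-pb$ has a unique solution, so the column contains exactly one symbol with each residue $b$. Consequently every row meets every column in exactly $m-1=e=\lambda_{rc}$ symbols.

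\textbf{Column intersections (the main obstacle).} Here $\lambda_{cc}=m(m-2)/((m-1)k-1)$. Since $k\ge 1+(m-1)(m-2)$, we get $(m-1)k-1\ge m(m-2)$ whenever $m\ge 3$, so $\lambda_{cc}\le1$ and we need every pair of columns to share at most one symbol. Two columns in the same group are disjoint. Suppose $ma+b$ and $ma'+b'$ with $b\ne b'$ both lie in $C_{p,x}\cap C_{q,y}$ with $p\ne q$. Subtracting the defining congruences gives $(p-q)b\equiv(p-q)b'\pmod k$, so
\[
k\mid (p-q)(b-b').
\]
Now $0<|(p-q)(b-b')|\le (m-2)(m-1)<k$, which is impossible. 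If instead $b=b'$, then $a\equiv a'$ and the two symbols coincide. So any two columns share at most one symbol. This is where the hypothesis on $k$ is used exactly, and it matches the role of $k\ne 6$ and $k\ge5$ in Lemma \ref{direct-Con}.

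\textbf{Conclusion.} All conditions hold, so $T$ is an $(m\times(m-1)k,mk)$-NTA.
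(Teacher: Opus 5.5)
Your proposal is correct and is essentially the paper's proof. It uses the same placement of $ma+b$ in row $(b+p)\bmod m+1$ and column $(a+pb)\bmod k$ of group $C_p$, and the same counting of row and row--column intersections via the unique missing residue $b$. It also reaches the same divisibility contradiction $k\mid(p-q)(b-b')$ with $0<|(p-q)(b-b')|\le(m-1)(m-2)<k$.

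You also check explicitly two details the paper leaves implicit. The first is that each group's placement is a bijection onto its $k\times m$ cells. The second is that $\lambda_{cc}\le 1$; this holds strictly for $m\ge 3$, so $\{0,1\}$ is exactly the required set of column intersection sizes.
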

\begin{proof}
Let $T=(t_{i,j})$, $1\leq i\leq m$, $1\leq j\leq (m-1)k$, be an $m\times(m-1)k$ row-column design on the point set
$X=\mathbb Z_{mk}=\{0,1,\ldots,mk-1\}$.
For $0\leq a<k$ and $0\leq b<m$, put the symbol $ma+b$ in the following $m-1$ cells: 
\begin{center}
$\left(\left((b+p) \bmod m\right)+1,\ \left((a+pb) \bmod k\right)+pk+1\right), \ 0 \le p \le m-2, $
\end{center}
Thus, for each symbol $ma+b$, its $m-1$ occurrences lie in the $m-1$ column groups
\begin{center}
$C_0=\{1,2,\ldots,k\},\ C_1=\{k+1,k+2,\ldots,2k\},$
$C_2=\{2k+1,2k+2,\ldots,3k\}$,\\\ $\ldots,\ C_{m-2}=\{(m-2)k+1,(m-2)k+2,\ldots,(m-1)k\}$.
\end{center}

Clearly, every symbol occurs exactly once in each of the $m-1$
column groups, and hence exactly $m-1$ times in $T$. Therefore, $T$
is equireplicate and each symbol appears at most once in each column.
Let $\mathcal R_i$, $1\leq i\leq m$, denote the set of elements in the $i$-th row.
Then we have
\begin{center}
$\mathcal R_i=\{ma+l\mid0\leq a<k,\ l\in\mathbb{Z}_{m}\setminus\{i \mod m\}\}$.
\end{center}
It follows that $|\mathcal R_i|=(m-1)k$ for each $1\leq i\leq m$. Thus, each symbol appears at most once in each row, and then $T$ is binary. It is straightforward to verify that $|\mathcal R_{i_1}\cap \mathcal R_{i_2}|=(m-2)k$ for any $1\leq i_1\neq i_2\leq m$.

We now verify the intersections between columns. Two distinct columns belonging to the same column group cannot contain a common symbol, because every symbol occurs exactly once in each column group. To facilitate computations modulo $k$, identify the columns in each group with $\mathbb Z_k$. Then the $m-1$ columns containing $ma+b$ are
\begin{center}
$a,\ \ (a+b)\pmod k,\ \ (a+2b)\pmod k, \ \,\ldots, \ \ (a+(m-2)b)\pmod k.$
\end{center}
Let $C_{p,x}$ denote the column corresponding to $x\in\mathbb Z_k$
in the $p$-th column group, where $p\in\{0,1,2,\ldots,m-2\}$. Then, we have
\begin{center}
$ma+b\in C_{p,x}
\quad\Longleftrightarrow\quad
a+pb\equiv x\pmod k$.
\end{center}
Suppose that two distinct symbols $ma+b$ and $ma'+b'$ belong to both $C_{p,x}$ and $C_{q,y}$, where $p\neq q$. If $b=b'$, then from $a + pb \equiv a' + pb' \pmod k$ and $0 \le a, a' < k$, we obtain $a = a'$, which contradicts the distinctness of the symbols. Hence, $b \neq b'$. Then $(p-q)(b-b')\equiv 0 \pmod k$, we have $k\mid(p-q)(b-b')$. Now, $0<|p-q|\le m-2$ and $0<|b-b'|\le m-1$, so $0<|(p-q)(b-b')|\leq (m-2)(m-1)$, since $k\geq 1+(m-1)(m-2)> (m-2)(m-1)$, this is a contradiction. Thus, any two distinct columns have $0$ or $1$ common symbol.

Finally, we prove that every row and every column of \(T\) have exactly $m-1$ common symbols. Fix a symbol $\alpha=ma+b,\ 0\le a<k,\ 0\le b<m$.
By the construction, \(\alpha\) occurs in the following $m-1$ rows:
$b+1,\ [(b+1)\pmod m]+1,\ [(b+2)\pmod m]+1,\ldots,\ [(b+(m-2))\pmod m]+1$. More explicitly,
$$\begin{array}{c|c}
b & \text{rows containing }ma+b\\ \hline
0 & 1,2,3,\ldots,m-2,m-1\\
1 & 2,3,4,\ldots,m-1,m\\
2 & 3,4,5,\ldots,m,1\\
\vdots& \vdots\\
m-1 & m,1,2,\ldots,m-3,m-2
\end{array}$$
Hence, each symbol occurs in exactly $m-1$ rows and is absent from exactly one row. The unique row in which \(ma+b\) is absent is
$$\begin{cases}
m,&b=0,\\
1,&b=1,\\
2,&b=2,\\
~\vdots&~~~\vdots\\
m-1,&b=m-1,\\
\end{cases}$$
Thus, as \(b\) runs through \(0,1,2,\ldots,m-1\), the $m$ symbols \(ma+b\) are respectively absent from the $m$ different rows.
Now fix an arbitrary column \(C_{p,x}\), where \(p\in\{0,1,2,\ldots,m-2\}\) and \(x\in\mathbb Z_k\). From the construction, the symbol \(ma+b\) occurs in \(C_{p,x}\) precisely when $a+pb\equiv x\pmod k$. In fact, for the first group, that is, \(p=0\), the column coordinate is \(a\); for the second group \(p=1\), the column coordinate is $(a+b)\pmod k$; for the third group \(p=2\), the column coordinate is $(a+2b)\pmod k$, $\ldots$, for the $m-1$ group \(p=m-2\), the column coordinate is $(a+(m-2)b)\pmod k$. For each fixed \(b\in\{0,1,2,\ldots,m-1\}\), the congruence
$a+pb\equiv x\pmod k$ has a unique solution $a\equiv x-pb\pmod k$.
Therefore, the column \(C_{p,x}\) contains exactly one symbol corresponding to each value of \(b\). In other words, there are exactly $m$ symbols in \(C_{p,x}\), say
$x_b=ma_b+b,\ b=0,1,2,\ldots,m-1$, where $a_b\equiv x-pb\pmod k$.
Hence, if \(\mathcal{R}_i\) is any fixed row, exactly one of the $m$ symbols in \(C_{p,x}\) is absent from \(\mathcal{R}_i\). The other $m-1$ symbols occur in \(\mathcal{R}_i\).
Therefore, $|\mathcal{R}_i\cap C_{p,x}|=m-1$, we conclude that every row and every column have exactly $m-1$ common symbols. This proves the required row column intersection condition. It follows that, for $m\geq 3$ and $k\geq 1+(m-1)(m-2)$, $T$ is an $(m\times(m-1)k,mk)$-NTA.
\end{proof}

\subsection{Constructions for NTAs based on matching designs}

Let $v, k$ and $\lambda$ be integers such that \(v \ge k \ge 2\) and \(\lambda \ge 1\).
Let \(X\) be a finite set of elements, called \emph{points}, and let \(\mathcal{A}\) be a finite collection of subsets of \(X\), called \emph{blocks}. The pair \((X, \mathcal{A})\) is called a \((v,k,\lambda)\) \emph{balanced incomplete block design} or, simply, a \((v,k,\lambda)\)-BIBD, if the following conditions hold:

(i) \(|X| = v\).

(ii) \(|A| = k\) for all \(A \in \mathcal{A}\).

(iii) Every pair of distinct points is contained in exactly \(\lambda\) blocks.

\noindent
It is easy to calculate that a $(v,k,\lambda)$-BIBD has exactly $b=\frac{\lambda v(v-1)}{k(k-1)}$ blocks and every point occurs in exactly  $r=\frac{\lambda(v-1)}{k-1}$ blocks. Therefore, a $(v,k,\lambda)$-BIBD is also called a $(v,b,r,k,\lambda)$-BIBD and $\{v,b,r,k,\lambda\}$ is called the set of its \emph{parameters}.

The \emph{column design} of a triple array is a block design with points and blocks corresponding to, respectively, columns and symbols of the array, with a point appearing in a block when the corresponding column contains the corresponding symbol. The \emph{row design} can be defined similarly. In the column design of a triple array, all blocks have the same size and each pair of points is covered by the same number of blocks, so it is a balanced incomplete block design (BIBD).

It is worth noting that in the definition of a BIBD, the elements within a block are unordered. However, the subsequent definitions require ordered structures, yet we continue to use set notation. For two sequences
$D_1=\{x_{1},x_2,\cdots,x_k\}$ and $D_2=\{y_{1},y_2,\cdots,y_k\}$, write $D_1D_2$ for the sequence of $k$ ordered pairs $x_1y_1$, $x_2y_2$, $\cdots$, $x_ky_k$. Let $(X, \mathcal{A})$ and $(Y, \mathcal{C})$ be a $(v,b,r,k,\lambda_1)$-BIBD and a $(r,b,v,k,\lambda_2)$-BIBD, respectively, where $\mathcal{A}=\{A_1,A_2,\cdots,A_b\}$ and $\mathcal{C}=\{C_1,C_2,\cdots,C_b\}$. $(X, \mathcal{A})$ and $(Y, \mathcal{C})$ are called \emph{a pair of matching designs} if every member of $X\times Y$ occurs exactly once in $\mathcal{A}\mathcal{C}=\{A_1C_1,A_2C_2,\cdots,A_bC_b\}$, namely $X\times Y=\bigcup\limits_{i=1}^bA_iC_i$.

\begin{exam}\label{MD}
Let $X=\{1,2,3,4,5\}$, $\mathcal{A}=\{A_1,A_2,\cdots,A_{10}\}$, $Y=\{1,2,3,4,5,6\}$, $\mathcal{C}=\{C_1,C_2,\cdots,C_{10}\}$.
One can easily check that $(X,\mathcal{A})$ and $(Y,\mathcal{C})$ are a $(5,10,6,3,3)$-BIBD and a $(6,10,5,3,2)$-BIBD, respectively, and they are a pair of matching designs.

\vskip 6pt
\mbox{}\hspace{0.8in}
$\begin{array}{|cccc|ccccc|}\hline
A_1:\ 1\ \ 2\ \ 4 && C_1:\ \ 1\ \ 3\ \ 5 &&& A_1C_1:\ 11 & 23 & 45 &  \\
A_2:\ 1\ \ 3\ \ 4 && C_2:\ \ 2\ \ 1\ \ 6 &&& A_2C_2:\ 12 & 31 & 46 &  \\
A_3:\ 1\ \ 2\ \ 3 && C_3:\ \ 3\ \ 4\ \ 6 &&& A_3C_3:\ 13 & 24 & 36 &  \\
A_4:\ 1\ \ 2\ \ 5 && C_4:\ \ 4\ \ 1\ \ 2 &&& A_4C_4:\ 14 & 21 & 52 &  \\
A_5:\ 1\ \ 3\ \ 5 && C_5:\ \ 5\ \ 2\ \ 3 &&& A_5C_5:\ 15 & 32 & 53 &  \\
A_6:\ 1\ \ 4\ \ 5 && C_6:\ \ 6\ \ 4\ \ 5 &&& A_6C_6:\ 16 & 44 & 55 & \\
A_7:\ 2\ \ 4\ \ 5 && C_7:\ \ 2\ \ 3\ \ 6 &&& A_7C_7:\ 22 & 43 & 56 & \\
A_8:\ 2\ \ 3\ \ 4 && C_8:\ \ 5\ \ 4\ \ 2 &&& A_8C_8:\ 25 & 34 & 42 &\\
A_9:\ 2\ \ 3\ \ 5 && C_9:\ \ 6\ \ 5\ \ 1 &&& A_9C_9:\ 26 & 35 & 51 & \\
A_{10}: 3\ \ 4\ \ 5 && C_{10}:\ \ 3\ \ 1\ \ 4 &&& A_{10}C_{10}:\ 33 & 41 & 54 & \\\hline
\end{array}$
\end{exam}

Matching designs have been widely applied in the construction of double arrays. Relevant detailed approaches are presented in McSorley \cite{McSorley37}.

\begin{con}[\!\!\cite{McSorley37}, Lemma 2.3]\label{MD-DA}
There exists a pair of matching designs, a $(v,b,r,k,\lambda_1)$-BIBD and a $(r,b,v,k,\lambda_2)$-BIBD, if and only if there exists a $(v\times r,b)$-DA, i.e., a
$(b,k,\lambda_{1},\lambda_{2}:v\times r)$-DA.
\end{con}
\begin{proof}
Suppose that $(X,\mathcal{A})$ and $(Y,\mathcal{D})$ are a $(v,b,r,k,\lambda_1)$-BIBD and an $(r,b,v,k,\lambda_2)$-BIBD, respectively, and they are a pair of matching designs.
Let $X=\{1,2,\ldots,v\}$, $\mathcal{A}=\{A_1,A_2,\cdots,A_{b}\}$, $Y=\{1,2,\ldots,r\}$, $\mathcal{D}=\{D_1,D_2,\cdots,D_{b}\}$, where $A_i=\{a_{i1},a_{i2},\ldots,a_{ik}\}$ and $D_i=\{d_{i1},d_{i2},\ldots,d_{ik}\}$, $i=1,2,\ldots,b$.

Define a $v\times r$ array $T=(t_{a_{ij},d_{ij}})$, $i=1,2,\ldots,b$ and $j=1,2,\ldots,k$, where $t_{a_{ij},d_{ij}}=i$.
Each symbol $i\in\{1,2,\ldots,b\}$ occurs at most once in each
row or column, as the elements of each block of $\mathcal{A}$ or $\mathcal{D}$ are distinct; each symbol $i\in\{1,2,\ldots,b\}$ appears exactly $k$ times in the array since $j$ has $k$ distinct choices, that is, $j\in\{1,2,\ldots,k\}$.
In addition, any two distinct rows contain $\lambda_1$ common symbols since $(X,\mathcal{A})$ is a $(v,b,r,k,\lambda_1)$-BIBD, and any two distinct columns contain $\lambda_2$ common symbols since $(Y,\mathcal{D})$ is a $(r,b,v,k,\lambda_2)$-BIBD. It follows that $T$ is a $(v\times r,b)$-DA.

Suppose that $T=(t_{ij})$ is a $(v\times r,b)$-DA. Let $R_i=\{t_{i1},t_{i2},\cdots,t_{ir}\}$, $i=1,2,\cdots,v$, and $C_j=\{t_{1j},t_{2j},\cdots,t_{vj}\}$, $j=1,2,\cdots,r$. For
each $l=1,2,\cdots,b$, define $A_l=\{x\mid l\in R_x\}$ and $D_l=\{x\mid l\in C_x\}$. It is easy to verify that $(X,\mathcal{A})$ and $(Y,\mathcal{D})$ are a $(v,b,r,k,\lambda_1)$-BIBD and a $(r,b,v,k,\lambda_2)$-BIBD, respectively, where $\mathcal{A}=\{A_1,A_2,\cdots,A_b\}$ and
$\mathcal{D}=\{D_1,D_2,\cdots,D_b\}$.
This completes the proof.
\end{proof}

\textbf{Remark 1}~  As shown in Construction \ref{MD-DA}, matching designs are applicable for constructing double arrays. However, such designs do not always produce triple arrays.

The blocks $A_i$ and $D_i$ $(i=1,2,\ldots,6)$ are given below. Let $X=\{1,2,3\}$, $\mathcal{A}=\{A_1,A_2,\cdots,A_{6}\}$, $Y=\{1,2,3,4\}$ and $\mathcal{D}=\{D_1,D_2,\cdots,D_{6}\}$. It can be readily seen that
$(X,\mathcal{A})$ and $(Y,\mathcal{D})$ are a $(3,6,4,2,2)$-BIBD and a $(4,6,3,2,1)$-BIBD, respectively, and they are a pair of matching designs.

\vskip 6pt
\mbox{}\hspace{1.0in}
$\begin{array}{|cccc|cccc|}\hline
A_1:\ 1\ \ 2\  && D_1:\ \ 1\ \ 2\  &&& A_1D_1:\ 11 & 22 &   \\
A_2:\ 1\ \ 3\  && D_2:\ \ 2\ \ 3\  &&& A_2D_2:\ 12 & 33 &   \\
A_3:\ 1\ \ 2\  && D_3:\ \ 3\ \ 1\  &&& A_3D_3:\ 13 & 21 &   \\
A_4:\ 1\ \ 3\  && D_4:\ \ 4\ \ 2\  &&& A_4D_4:\ 14 & 32 &  \\
A_5:\ 2\ \ 3\  && D_5:\ \ 3\ \ 4\  &&& A_5D_5:\ 23 & 34 &   \\
A_6:\ 2\ \ 3\  && D_6:\ \ 4\ \ 1\  &&& A_6D_6:\ 24 & 31 &   \\\hline
\end{array}$

\vskip 6pt

\noindent
By Construction \ref{MD-DA}, we obtain a $(3\times 4,6)$-DA, $T$, which is listed below.

\begin{center}
$T=\begin{array}{|cccc|}\hline
  1 & 2 & 3 & 4  \\
  3 & 1 & 5 & 6  \\
  6 & 4 & 2 & 5 \\\hline
\end{array}$
\end{center}
Not every double array is a triple array. In fact, an exhaustive search shows that there is no $(3\times 4,6)$ triple array \cite{McSorley2005}. Thus, the existence of matching designs is only a necessary condition for the existence of triple arrays, but not a sufficient one.

\begin{con}\label{MD-TA}
Let $(X, \mathcal{A})$ be a $(v,b,r,k,\lambda_1)$-BIBD, where $X = \{1,2,\dots,v\}$ and $\mathcal{A} = \{A_1,A_2,\dots,A_b\}$, and $(Y, \mathcal{D})$ be an  $(r,b,v,k,\lambda_2)$-BIBD, where $Y = \{1,2,\dots,r\}$ and $\mathcal{D} = \{D_1,D_2,\dots,D_b\}$, and they are a pair of matching designs. Define
$R_i=\{x\mid i\in A_x\}$ for any $i\in X$ and $C_j=\{x\mid j\in D_x\}$ for any $j\in Y$. If $\lvert R_i \cap C_j \rvert =\lambda_{3}$ for any $i\in X$ and $j\in Y$, then there exists a $(v\times r,b)$-TA, namely a $(b,k,\lambda_{1},\lambda_{2},\lambda_{3}:v\times r)$-TA.
\end{con}
\begin{proof}
Let $T$  be defined as the proof of Construction \ref{MD-DA}, then $T$ is a $(v\times r,b)$-DA. Since $\lvert R_i \cap C_j \rvert =\lambda_{3}$ for any $i\in X$ and $j\in Y$, every row and every column contain precisely $\lambda_{3}$ common symbols. This completes the proof.
\end{proof}

The existence of a $(5\times 6, 10)$-TA
was already given by Agrawal in 1966 \cite{Agrawal1966}. Below, to illustrate Construction \ref{MD-TA}, we also provide an example for this small parameter.
\begin{exam}
There exists a $(5\times 6, 10)$-TA.
\end{exam}
\begin{proof}
By Construction \ref{MD-TA} and Example \ref{MD}, a $(5\times 6, 10)$-TA is obtained and listed below.
\begin{center}
$T=\begin{array}{|cccccc|}\hline
  1 & 2 & 3 & 4 & 5 & 6 \\
  4 & 7 & 1 & 3 & 8 & 9 \\
  2 & 5 & 10 & 8 & 9 & 3 \\
  10 & 8 & 7 & 6 & 1 & 2 \\
  9 & 4 & 5 & 10 & 6 & 7 \\\hline
\end{array}$
\end{center}
\end{proof}

The definition of a near triple array is derived by relaxing the three intersection conditions of triple array. Accordingly, for certain arrays where the parameters
$e$, $\lambda_{rr}$, $\lambda_{cc}$ and $\lambda_{rc}$ may form a binary set (i.e., take two consecutive values), the row and column designs are not balanced incomplete block designs any more, but maximally balanced maximally uniform designs, which were studied by Bofill and Torras \cite{Bofill2004}.

\emph{Maximally balanced maximally uniform designs} (MBMUDs) are block designs in which the block sizes are either constant or each equal to one of two adjacent integers, and the same condition holds for occurrence numbers of symbols (also called \emph{replication numbers}), and for covering numbers of pairs of symbols (also called \emph{concurrences}). We use the notation $(v,b,\{r^-,r^+\},\{k^-,k^+\},\{\lambda_{cc}^-,\lambda_{cc}^+\})$-MBMUD or $(v,b,r,k,\lambda_{cc})$-MBMUD for short, where $v$ is the number of points, $b$ the number of blocks, the replication number of each point is either $r^-$ or $r^+$, the block size is either $k^-$ or $k^+$, and the number of occurrences of each pair of points is $\lambda_{cc}^-$ or $\lambda_{cc}^+$.  If the replication number of each point in an MBMUD is constant, say $r$, then it is called \emph{regular}. For our further study, the MBMUDs adopted are all regular. Similarly, we define \emph{matching designs for maximally balanced maximally uniform designs} (MBMUDs), whose definition is analogous to that for BIBDs. Let $(X, \mathcal{A})$ and $(Y, \mathcal{C})$ be a $(v,b,r,\{k^-,k^+\},\{\lambda_1^-,\lambda_1^+\})$-MBMUD and a $(r,b,v,\{k^-,k^+\},\{\lambda_2^-,\lambda_2^+\})$-MBMUD, respectively, where $\mathcal{A}=\{A_1,A_2,\cdots,A_b\}$ and $\mathcal{C}=\{C_1,C_2,\cdots,C_b\}$. $(X, \mathcal{A})$ and $(Y, \mathcal{C})$ are called \emph{a pair of matching designs} if every member of $X\times Y$ occurs exactly once in $\mathcal{A}\mathcal{C}=\{A_1C_1,A_2C_2,\cdots,A_bC_b\}$, i.e., $X\times Y=\bigcup\limits_{i=1}^bA_iC_i$. Note that, the numbers of block size $k^-$ of these two MBMUDs are the same. We give an example of a pair of matching designs for maximally balanced maximally uniform designs in Example \ref{MD-MBMUD} below.

\begin{exam}
$(X, \mathcal{A})$ is a $(13,16,4,\{3,4\},\{0,1\})$-MBMUD, where

\mbox{}\hspace{1.4in}
$X = \{1,2,3,4,5,6,7,8,9,10,11,12,13\}$,

$\mathcal{A} = \{\{1,2,3,4\}, \{1,5,6,7\}, \{1,8,9,10\}, \{1,11,12,13\}, \{2,5,8\}, \{3,6,9\}, \{4,7,10\}$,

\mbox{}\hspace{0.38in}
$\{2,9,11\},~ \{3,10,12\}, ~\{4,8,13\}, ~~\{2,6,12\},~ \{3,7,13\},~ \{4,5,11\},~ \{5,9,12\}, $

\mbox{}\hspace{0.43in}$\{6,10,13\}, \{7,8,11\}\}.$
\end{exam}

It follows that the column design of an NTA is an MBMUD with fixed replication numbers, and the row design has an analogous structure. If one or both of the parameters $e$ and $\lambda_{cc}$ for an NTA are integers, the corresponding column design belongs to other thoroughly investigated design classes.

Namely, if $e$ is an integer, then column design of an NTA has constant block sizes and replication numbers, and covering numbers of pairs $\lambda_{cc}^{-}$ or $\lambda_{cc}^{+}$. Such block designs are known as \emph{regular graph designs}, which were originally introduced by John and Mitchell\cite{John1977}. If $\lambda_{cc}$ is an integer, then the column design of an NTA has constant replication numbers and covering numbers, while its block sizes are either $e^{-}$ or $e^{+}$. This yields a $(c, \{e^{-}, e^{+}\}, \lambda_{cc})$-pairwise balanced design (PBD). For a detailed introduction to pairwise balanced designs, see Part IV of the \emph{Handbook of Combinatorial Designs}\cite{Handbook}. Finally, if both $e$ and $\lambda_{cc}$ are integers, as happens for triple array, then each block of column design of the NTA contains exactly $e$ symbols, and any pair of symbols is covered by $\lambda_{cc}$ blocks. Hence, the column design of the NTA is a $(c, e, \lambda_{cc})$-BIBD. The same conclusion holds for the row design.

The proof of the following theorem is analogous to that of Construction \ref{MD-TA}, so it is omitted here.

\begin{thm}\label{CNTA}
Let $(X, \mathcal{A})$ be a $(v,b,r,\{k^-,k^+\},\{\lambda_1^-,\lambda_1^+\})$-MBMUD, where $X = \{1,2,\\\dots,v\}$ and $\mathcal{A} = \{A_1, A_2,\dots, A_b\}$, and $(Y, \mathcal{B})$ be a $(r,b,v,\{k^-,k^+\},\{\lambda_2^-,\lambda_2^+\})$-MBMUD, where $Y= \{1,2,\dots,r\}$ and $\mathcal{B} = \{B_1, B_2,\dots, B_b\}$, and they are a pair of matching designs. Define $R_i=\{x\mid i\in A_x\}$ for any $i\in X$ and $C_j=\{x\mid j\in B_x\}$ for any $j\in Y$.
If $\lvert R_i \cap C_j \rvert \in\{\lambda_3^-,\lambda_3^+\}$ for any $i\in X$ and $j\in Y$, then there exists a $(v\times r,b)$-NTA, i.e., a $(b,\{k^-,k^+\},\{\lambda_1^-,\lambda_1^+\},\{\lambda_2^-,\lambda_2^+\},\{\lambda_3^-,\lambda_3^+\}:v\times r)$-NTA.
\end{thm}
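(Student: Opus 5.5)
The plan is to mirror the proof of Construction \ref{MD-TA}, building the array exactly as in Construction \ref{MD-DA} and then checking the NTA conditions one at a time.

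\textbf{Construction of the array.} Write each block in paired order, $A_x=\{a_{x1},\dots,a_{xk_x}\}$ and $B_x=\{b_{x1},\dots,b_{xk_x}\}$. Here $k_x\in\{k^-,k^+\}$ is the common size of $A_x$ and $B_x$; this common size is forced because $A_xB_x$ is a sequence of ordered pairs. Define the $v\times r$ array $T$ by $t_{a_{xj},b_{xj}}=x$.

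\textbf{Well-definedness.} The matching condition says that every $(i,j)\in X\times Y$ occurs exactly once among the pairs $A_xB_x$. Hence every cell of $T$ receives exactly one symbol, and $T$ is a genuine $v\times r$ row-column design on $b$ symbols.

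\textbf{Binary property and replication numbers.} The points within each block $A_x$ are distinct, and so are the points within each $B_x$. Therefore symbol $x$ occurs at most once in each row and at most once in each column, so $T$ is binary. Symbol $x$ occurs exactly $k_x\in\{k^-,k^+\}$ times. Counting cells gives $vr=\sum_x k_x$, so the average replication $e=vr/b$ has $e^\pm=k^\pm$ (in the constant case $e=k$). Thus $T$ is equireplicate or near equireplicate.

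\textbf{Row and column intersections.} Row $i$ of $T$ has symbol set exactly $R_i=\{x\mid i\in A_x\}$, and column $j$ has symbol set $C_j=\{x\mid j\in B_x\}$. The number of common symbols of rows $i$ and $i'$ is the number of blocks of $\mathcal A$ containing both points, which lies in $\{\lambda_1^-,\lambda_1^+\}$. Likewise, column pairs meet in $\lambda_2^\pm$ symbols, and by hypothesis each row meets each column in $\lambda_3^\pm$ symbols.

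\textbf{Main obstacle: matching the parameters.} The delicate step is to check that these given values agree with the values $\lambda_{rr}^\pm$, $\lambda_{cc}^\pm$, $\lambda_{rc}^\pm$ required by Lemma \ref{Nec-par}.

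For the row pairs, average over all pairs. Double counting gives $\sum_x\binom{k_x}{2}=\binom{v}{2}\bar\lambda_1$, so the average row intersection $\lambda_{rr}$ equals the average concurrence $\bar\lambda_1$ of the MBMUD. Every pair concurrence lies in $\{\lambda_1^-,\lambda_1^+\}$, two adjacent integers, so the average lies in $[\lambda_1^-,\lambda_1^+]$. Hence $\lfloor\lambda_{rr}\rfloor$ and $\lceil\lambda_{rr}\rceil$ coincide with the values taken, reading these as the floor and ceiling of the average when the design is balanced.

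The same argument applies to the column pairs with $\lambda_2$. For row–column pairs, the identity $\sum_{i,j}|R_i\cap C_j|=\sum_x k_x^2$ shows that the average of $|R_i\cap C_j|$ is $\lambda_{rc}$. So the hypothesis $\lambda_3^\pm$, interpreted as two adjacent integers bracketing $\lambda_{rc}$, gives condition 3.

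Therefore $T$ is a $(v\times r,b)$-NTA with the stated parameters.
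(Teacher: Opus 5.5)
Your proposal is correct and follows the same route the paper intends: build the array as in Construction \ref{MD-DA}, then read off binarity, the replication numbers, and the row, column and row--column intersections from the two MBMUDs, the matching condition and the hypothesis, as in Construction \ref{MD-TA}. Your extra averaging step fills in something the paper leaves implicit. It shows, via $\sum_x k_x=vr$, $\sum_x\binom{k_x}{2}=\binom{v}{2}\bar\lambda_1$ and $\sum_{i,j}|R_i\cap C_j|=\sum_x k_x^2$, that $k^\pm,\lambda_1^\pm,\lambda_2^\pm,\lambda_3^\pm$ are exactly the floors and ceilings of $e,\lambda_{rr},\lambda_{cc},\lambda_{rc}$.
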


\begin{exam}\label{MD-MBMUD}
The blocks $A_i$ and $B_i(i=1,2,\ldots,16)$ are given below. Let $X=\{1,2,3,4\}$, $\mathcal{A}=\{A_1,A_2,\ldots,A_{16}\}$, $Y=\{1,2,\ldots,13\}$ and $\mathcal{B}=\{B_1,B_2,\ldots,B_{16}\}$. It can be readily seen that
$(X,\mathcal{A})$ and $(Y,\mathcal{B})$ are a $(4,16,13,\{3,4\},10)$-MBMUD and a $(13,16,4,\{3,4\},\{0,\\1\})$-MBMUD, respectively.

\mbox{}\hspace{0.4in}
$\begin{array}{|cccc|cccccc|}\hline
A_1: 1\ 2\ 3\ 4 && B_1: 4\ 3\ 1\ 2 &&& A_1B_1: 1,4 & 2,3 & 3,1 & 4,2 & \\
A_2: 1\ 2\ 3\ 4 && B_2: 1\ 5\ 7\ 6 &&& A_2B_2: 1,1 & 2,5 & 3,7 & 4,6 &  \\
A_3: 1\ 2\ 3\ 4 && B_3: 10\ 1\ 8\ 9 &&& A_3B_3: 1,10 & 2,1 & 3,8 & 4,9 &  \\
A_4: 1\ 2\ 3\ 4 && B_4: 11\ 13\ 12\ 1 &&& A_4B_4: 1,11 & 2,13 & 3,12 & 4,1 &\\
A_5: 1\ 2\ 3   && B_5: 2\ 8\ 5 &&& A_5B_5: 1,2 & 2,8 & 3,5 & & \\
A_6: 1\ 2\ 3    && B_6: 9\ 6\ 3 &&& A_6B_6: 1,9 & 2,6 & 3,3 & &\\
A_7: 1\ 2\ 3    && B_7: 7\ 10\ 4 &&& A_7B_7: 1,7 & 2,10 & 3,4 & &\\
A_8: 2\ 3\ 4    && B_8: 9\ 2\ 11 &&& A_8B_8: 2,9 & 3,2 & 4,11 & &\\
A_9: 2\ 3\ 4    && B_9: 12\ 10\ 3 &&& A_9B_9: 2,12 & 3,10 & 4,3 &&\\
A_{10}: 2\ 3\ 4 && B_{10}: 4\ 13\ 8 &&& A_{10}B_{10}: 2,4 & 3,13 & 4,8 && \\
A_{11}: 1\ 2\ 4 && B_{11}: 6\ 2\ 12 &&& A_{11}B_{11}: 1,6 & 2,2 & 4,12 && \\
A_{12}: 1\ 2\ 4 && B_{12}: 3\ 7\ 13 &&& A_{12}B_{12}: 1,3 & 2,7& 4,13 & &\\
A_{13}: 1\ 2\ 4 && B_{13}: 5\ 11\ 4 &&& A_{13}B_{13}: 1,5& 2,11 & 4,4 && \\
A_{14}: 1\ 3\ 4 && B_{14}: 12\ 9\ 5 &&& A_{14}B_{14}: 1,12 & 3,9 & 4,5 && \\
A_{15}: 1\ 3\ 4 && B_{15}: 13\ 6\ 10 &&& A_{15}B_{15}: 1,13 & 3,6 & 4,10 && \\
A_{16}: 1\ 3\ 4 && B_{16}: 8\ 11\ 7 &&& A_{16}B_{16}: 1,8 & 3,11 & 4,7 & &\\\hline
\end{array}$

\vskip 6pt
\noindent
It is easy to verify that these two MBMUDs satisfy the conditions of Theorem \ref{CNTA}, i.e., they are a pair of matching designs, and $|R_i\cap C_j|\in\{3,4\}$ for any $i\in X$ and $j\in Y$. Therefore, by Theorem \ref{CNTA}, a $(4\times13, 16)$-NTA (or a $(16,\{3,4\},10,\{0,1\},\{3,4\}:4\times 13)$-NTA) is obtained and listed below.

\begin{center}
$\begin{array}{|ccccccccccccc|}\hline
2 & 5 & 12 & 1 & 13 & 11 & 7 & 16 & 6 & 3 & 4 & 14 & 15 \\
3 & 11 & 1 & 10 & 2 & 6 & 12 & 5 & 8 & 7 & 13 & 9 & 4 \\
1 & 8 & 6 & 7 & 5 & 15 & 2 & 3 & 14 & 9 & 16 & 4 & 10 \\
4 & 1 & 9 & 13 & 14 & 2 & 16 & 10 & 3 & 15 & 8 & 11 & 12 \\\hline
\end{array}$
\end{center}
\end{exam}

\begin{lem}\label{small1-c-v}
A $(4\times c, v)$-NTA exists for any parameters $c$ and $v$ listed below.

\vskip 6pt
\centering
\begin{tabular}{|c c|c c|}
\hline
$c=13$ & \(13\leq v \leq23\) & $c=14$ & \(14\leq v \leq23\) \\
$c=15$ & \(15\leq v \leq22\) & $c=16$ & \(16\leq v \leq23\) \\
$c=17$ & \(17\leq v \leq23\) & $c=18$& \(18\leq v \leq23\) \\
$c=19$ & \(19\leq v \leq24\) & $c=20$ & \(20\leq v \leq25\) \\
$c=21$ & \(21\leq v \leq24\) & $c=22$ & \(22\leq v \leq24\) \\
$c=23$ & \(23\leq v \leq25\) & $c=24$ & \(24\leq v \leq26\) \\
$c=25$ & \(25\leq v \leq27\) &  &  \\\hline
\end{tabular}
\end{lem}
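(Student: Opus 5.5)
The plan is to cover every pair $(c,v)$ in the table in two ways. Whatever an earlier lemma already gives is taken from there, and the remaining cases are settled by explicit arrays, found by computer, in the same style as Examples \ref{smallexample1}, \ref{smallexample2} and \ref{MD-MBMUD}.

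\textbf{Cases supplied by earlier lemmas.} With $r=4$, Lemma \ref{4nn} gives both a $(4\times n,n)$-NTA and a $(4\times(n-1),n)$-NTA for every $n\ge 13$. This settles $v=c$ for all $13\le c\le 25$, and $v=c+1$ for all $c\ge 12$. Lemma \ref{4n-2-n} gives $v=c+2$ whenever $c+2\ge 14$, i.e. $c\ge 12$. Example \ref{smallexample1} supplies $(15,20),(15,21),(15,22)$ and $(16,22)$. Example \ref{smallexample2} supplies $(18,24),(18,25),(18,26)$ and $(19,26)$; these partly exceed the table. The $(4\times 13,16)$-NTA of Example \ref{MD-MBMUD} also fits here.

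\textbf{Cases to construct directly.} What remains is the band $c+3\le v\le$ (the stated upper end), with the exception of the entries above. For instance, for $c=13$ we still need $v\in\{17,\dots,23\}$, and for $c=14$ we need $v\in\{17,\dots,23\}$. For each such pair the first step is to compute the target parameters $e,\lambda_{rr},\lambda_{cc},\lambda_{rc}$ from Lemma \ref{Nec-par}. For these sizes $e$ lies between about $2$ and $4$, $\lambda_{cc}\in[0,1]$ or $[1,2]$, and $\lambda_{rc}$ takes the values $e^\pm$.

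The main tool will be Theorem \ref{CNTA}, applied as follows.
\begin{itemize}
\item Build a regular $(c,v,4,\{e^-,e^+\},\{\lambda_{cc}^-,\lambda_{cc}^+\})$-MBMUD as the column design. Where possible, derive it from a cyclic or near-cyclic packing on $\mathbb{Z}_c$, since MGR-type structures give $\lambda_{cc}\le1$.
\item Pair it with the row design on $4$ points, whose blocks are the $e^\pm$-subsets of rows.
\item Search for a matching, i.e. an ordering of each block, so that every cell of the $4\times c$ grid is used exactly once and the row--column intersections lie in $\{e^-,e^+\}$.
\end{itemize}

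A complementary technique is to modify a known NTA by symbol splitting, as in the passage from $T$ to $T_1$ and $T_2$ in Lemma \ref{direct-Con}. Starting from an $(4\times c,v)$-NTA, replace two occurrences of existing symbols, chosen in disjoint-enough columns, by a new symbol. This yields a $(4\times c,v+1)$ candidate, whose intersection conditions are then checked locally.

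The main obstacle is the cases with larger $v-c$, such as $(13,23)$ and $(14,23)$. There $e<2$ and $\lambda_{rr}$ is fractional, while the row-pair, column-pair and row-column conditions all bind simultaneously, so the splitting steps can break the row-column condition (as happened for $k=8$ in Lemma \ref{direct-Con}). For these cases I would rely on a computer search, such as simulated annealing or an exact SAT/ILP formulation over $4\times c$ arrays. The resulting arrays would be listed explicitly, for example in an appendix, and each would be verified mechanically against the three intersection conditions and the near-equireplication condition of Lemma \ref{Nec-par}.
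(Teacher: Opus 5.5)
Your case split is the same as the paper's. The pairs with $v\le c+2$ come from Lemmas \ref{4nn} and \ref{4n-2-n}. The pairs $(15,20),(15,21),(15,22),(16,22)$ come from Lemma \ref{direct-Con} and Example \ref{smallexample1}. Everything else comes from matched MBMUDs via Theorem \ref{CNTA}, with the resulting arrays written out in Appendix A; your use of Example \ref{MD-MBMUD} for $(13,16)$ is the first of those arrays, relabelled.

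What your proposal lacks is the part that actually proves the lemma for the remaining thirty-four pairs:
\begin{itemize}
\item $c=13$ and $c=14$ with $17\le v\le 23$;
\item $(15,18)$ and $(15,19)$;
\item $(16,v)$ for $v\in\{19,20,21,23\}$;
\item $(17,v)$ for $20\le v\le 23$;
\item $(18,v)$ for $21\le v\le 23$;
\item $(19,v)$ for $22\le v\le 24$;
\item $(20,v)$ for $23\le v\le 25$;
\item $(21,24)$.
\end{itemize}
For a finite existence statement like this, the explicit arrays are the proof. Describing a search (MBMUD plus matching, symbol splitting, SAT/ILP) certifies nothing until it returns arrays that are then checked. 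Existence cannot be assumed in advance, because genuine exceptions occur close by, e.g.\ $(10,12)$ and $(11,13)$. As written, these thirty-four cases remain unproved.

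There are also a few slips in your parameter remarks.
\begin{itemize}
\item Since $c\ge 13$ and $\lambda_{rc}\le e^+\le 4$, Lemma \ref{Nec-par} gives $\lambda_{cc}=4(\lambda_{rc}-1)/(c-1)\le 12/(c-1)\le 1$. So the range $[1,2]$ never arises.
\item The hardest cases do not have $e<2$. For $(13,23)$, $e=52/23\approx 2.26$, and for $(14,23)$, $e=56/23\approx 2.43$. The target sets for $e,\lambda_{rr},\lambda_{cc},\lambda_{rc}$ are $\{2,3\},\{5,6\},\{0,1\},\{2,3\}$ and $\{2,3\},\{7,8\},\{0,1\},\{2,3\}$ respectively.
\item The four pairs from Example \ref{smallexample2} lie entirely outside the table, not partly, so they contribute nothing here.
\end{itemize}
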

\begin{proof}
For $c \leq v \leq c+2$, a $(4 \times c, v)$-NTA exists by Lemma \ref{4nn} and Lemma \ref{4n-2-n}. For $(c,v)\in\{(15,20),(15,21),(15,22),(16,22)\}$, a $(4 \times c, v)$-NTA exists by Lemma \ref{direct-Con} or Example \ref{smallexample1}. For the other desired NTAs, we construct the matching designs for maximally balanced maximally uniform designs with the properties stated as Theorem \ref{CNTA}. To save space, we do not list these matching designs (likewise Example \ref{MD-MBMUD}) one by one, and only list the corresponding NTAs in Appendix A.
\end{proof}

\section{Proof of Theorem \ref{MainTH}}

In this section, we will provide the proof of Theorem \ref{MainTH}. The general structure of the proof of Theorem \ref{MainTH} largely follows the proof of Theorem 5.10 in \cite{Gordeev2026}. We first need to construct an $(r\times c, v)$-NTA using the following construction and results.

\begin{lem}[\!\!\cite{Gordeev2026}, Lemma 5.6]\label{T1+T2}
Let $T_1$ and $T_2$ be an $(r \times c_1, v_1)$-NTA and an $(r \times c_2, v_2)$-NTA on disjoint sets of symbols with parameters $e_1, \lambda_{rr}^1, \lambda_{cc}^1, \lambda_{rc}^1$ and $e_2, \lambda_{rr}^2, \lambda_{cc}^2, \lambda_{rc}^2$, respectively. Note that these parameters $e_i, \lambda_{rr}^i, \lambda_{cc}^i$ and $\lambda_{rc}^i$ for $i=1,2$ refer to the average values of the corresponding parameters. If the following conditions are satisfied:

1. $\max(e_1^+, e_2^+) - \min(e_1^-, e_2^-) \leq 1$,

2. $\lambda_{rr}^1 \in \mathbb{Z}$ or $\lambda_{rr}^2 \in \mathbb{Z}$,

3. $\lambda_{cc}^1, \lambda_{cc}^2 \leq 1$.

\noindent
Let $T$ be a concatenation of $T_1$ and $T_2$ (each row of $T$ is a concatenation of two corresponding rows of $T_1$ and $T_2$), then $T$ is an $(r \times (c_1 + c_2), v_1 + v_2)$-NTA.
\end{lem}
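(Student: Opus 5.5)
The concatenated array $T$ is checked directly against the definition. I take $T_1,T_2$ on disjoint symbol sets with $v=v_1+v_2$ and $c=c_1+c_2$; writing $e_i=rc_i/v_i$, the average $e=rc/v$ lies between $e_1$ and $e_2$. First, $T$ is binary: each row is the disjoint union of a row of $T_1$ and a row of $T_2$, and each column is unchanged. Every symbol keeps its replication number from $T_1$ or $T_2$, so it lies in $\{e_1^-,e_1^+,e_2^-,e_2^+\}$. By condition 1 this set consists of at most two consecutive integers $m,m+1$, with $m\le e\le m+1$. If $e$ is an integer and the array is not constant, I must check that it is still equireplicate or near equireplicate. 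I expect the counting identity $\sum(\text{replications})=rc$ to force $e$ to be an integer only when all replications equal $e$: if $e=m$ then all equal $m$, and likewise for $m+1$. Hence $e^-,e^+$ are exactly the values that occur.

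\emph{Row–column intersections.} A row of $T$ meets a column coming from $T_i$ in exactly the row–column intersection inside $T_i$, which lies in $\{e_i^-,e_i^+\}$ by Lemma \ref{Nec-par}(2). These values are contained in $\{e^-,e^+\}=\{\lambda_{rc}^-,\lambda_{rc}^+\}$ by the previous step and Lemma \ref{Nec-par}(2) again.

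\emph{Column–column intersections.} Two columns from the same $T_i$ meet in $\lambda_{cc}^{i,\pm}\in\{0,1\}$ by condition 3, and columns from different parts meet in $0$. I still need $\{\lambda_{cc}^-,\lambda_{cc}^+\}\supseteq\{0,1\}$, i.e.\ $0\le\lambda_{cc}\le1$, and any value that actually occurs should lie in the allowed pair. From Lemma \ref{Nec-par}, $\lambda_{cc}=r(\lambda_{rc}-1)/(c-1)$. Since cross pairs give $0$, the average over all pairs is at most $1$, so $\lambda_{cc}\le 1$. When $\lambda_{cc}=1$ exactly, I plan to show this forces no cross pairs, so it is degenerate; I would handle it by noting that the average of actual intersections equals $\lambda_{cc}$ (the average formula holds for any binary design with these replications, by counting). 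Every actual value is $0$ or $1$ and their average is $\lambda_{cc}\in[0,1]$, so the values lie in $\{\lfloor\lambda_{cc}\rfloor,\lceil\lambda_{cc}\rceil\}$ except when $\lambda_{cc}\in\{0,1\}$ is an integer. In that case all values equal $\lambda_{cc}$ because the average attains an endpoint.

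\emph{Row–row intersections (main obstacle).} Two rows of $T$ meet in $a+b$, where $a\in\{\lambda_{rr}^{1,-},\lambda_{rr}^{1,+}\}$ and $b\in\{\lambda_{rr}^{2,-},\lambda_{rr}^{2,+}\}$. By condition 2 one of these sets, say the first, is a singleton $\{\lambda_{rr}^1\}$, so the sums take at most two consecutive values $\lambda_{rr}^1+\lambda_{rr}^{2,\pm}$. The actual average intersection of $T$ equals $\lambda_{rr}$, by Lemma \ref{Nec-par}(3), which is valid for the combined design because it comes from double counting. Using the same endpoint argument as for columns, the actual values, which lie in a pair of consecutive integers and average to $\lambda_{rr}$, are exactly $\lambda_{rr}^\pm$. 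The delicate point is justifying that the averaging identities of Lemma \ref{Nec-par} hold for the concatenated array before we know it is an NTA. I would prove this directly by double counting, $\sum_{\text{row pairs}}|R\cap R'|=\sum_x\binom{e_x}{2}$, and in the same way for the column pairs.
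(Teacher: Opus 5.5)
Your proposal is correct. The paper gives no proof of its own for this lemma (it is imported from Gordeev et al.), and your direct check against the definition is the natural argument:
\begin{itemize}
\item condition 1 gives (near) equireplication and puts the row--column sizes in $\{e^-,e^+\}$;
\item disjointness of the symbol sets plus condition 3 puts the column-pair sizes in $\{0,1\}$;
\item condition 2 confines the row-pair sizes to two consecutive values;
\item the averaging/endpoint argument then identifies these values with $\lambda_{rr}^{\pm}$ and $\lambda_{cc}^{\pm}$.
\end{itemize}

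The ``delicate point'' you flag is automatic. By definition, $\lambda_{rr}$, $\lambda_{cc}$ and $\lambda_{rc}$ are the averages of the actual intersection sizes in $T$, so no identity needs to be established first.

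The only tacit assumption is $v\le rc$, which you need for $\{e^-,e^+\}=\{\lambda_{rc}^-,\lambda_{rc}^+\}$ via Lemma \ref{Nec-par}(2). If instead $v>rc$, condition 1 forces every replication number to be at most $1$, and the concatenation is then trivially an NTA.
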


Regarding the existence of NTAs with small parameters, we have the following result, which is due to \cite{Gordeev2026}.

\begin{lem}\label{1small-c-v}
A $(4\times c, v)$-NTA exists for any parameters $4\leq c\leq 12$ and $c\leq v\leq \frac{7c}{2}$ except for $(c,v)\in\{ (4,9),(5,7),(5,10),(6,8),(7,9),(10,12),(11,13)\}$.
\end{lem}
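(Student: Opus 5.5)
The plan is not a new argument. Lemma \ref{1small-c-v} is a finite statement: $4\le c\le 12$ and $c\le v\le \lfloor 7c/2\rfloor$, so there are at most a few hundred pairs $(c,v)$. I would prove it by reading off the computational results of Gordeev, Markstr\"{o}m and \"{O}hman \cite{Gordeev2026}, whose Appendix B (Table B.1 and its companions) covers row-column designs with small numbers of rows, including $r=4$. The proof is then a bookkeeping check that every pair in the stated range is either realised by an array in that catalogue or is one of the seven listed exceptions.

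For the existence half, I would first dispose of as many pairs as possible with results proved earlier in this paper, to shrink the list that must be taken from \cite{Gordeev2026}.
\begin{itemize}
\item Lemma \ref{Del-Col} gives $(4\times(v-1),v)$-NTAs from $(4\times v,v)$-NTAs. These base arrays are the near Youden rectangles of \cite{JMSO23}, available for every $v\ge 4$ except $v=9$.
\item Lemma \ref{MGR-NTA} applies wherever an MGR with $k=4$ exists, i.e.\ $n\ge 13$ by Lemma \ref{Known-MGR}(2); with $n\le 12$ here, it only matters at the boundary.
\item Lemma \ref{T1+T2} concatenates two smaller four-row NTAs. Its conditions need $\lambda_{cc}^1,\lambda_{cc}^2\le 1$, replication numbers within one of each other, and one $\lambda_{rr}$ integral. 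This is the natural tool for large $v$ relative to $c$, since there $e<2$ and $\lambda_{cc}$ is small.
\end{itemize}
Every remaining pair $(c,v)$ would be covered by an explicit array from \cite{Gordeev2026}. Each such array is checked directly: binarity, replication in $\{e^-,e^+\}$, and the three intersection conditions against the values from Lemma \ref{Nec-par}.

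For the nonexistence half, I would rely on the exhaustive searches of \cite{Gordeev2026} for the seven pairs $(4,9),(5,7),(5,10),(6,8),(7,9),(10,12),(11,13)$. These are orderly or SAT-based searches up to row, column and symbol permutations. For $(4,9)$ the exception is consistent with the known nonexistence of a near Youden rectangle of size $4\times 9$ that Lemma \ref{Del-Col} would otherwise have used. For $(5,7)$, $(6,8)$, $(7,9)$, $(10,12)$ and $(11,13)$, namely $v=c+2$, Lemma \ref{MGR-NTA} needs $n=v\ge 14$, which is why these fall outside it.

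The main obstacle is that nonexistence cannot be argued by a short counting argument. Lemma \ref{Nec-par} pins down $\lambda_{rr},\lambda_{cc},\lambda_{rc}$, but for these parameters the averages are non-integral and the counting constraints are satisfiable. An independent verification would therefore itself be a computer search. If I had to redo it, I would fix the first row and first column up to symmetry, encode the intersection bounds as cardinality constraints, and run a SAT solver, cross-checking against the catalogue of \cite{Gordeev2026}.
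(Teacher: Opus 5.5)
Your plan matches the paper's. The paper proves this lemma in one line, by reading it off Table B.2 in Appendix B of \cite{Gordeev2026}; that is the four-row table, whereas Table B.1 is the three-row one. Your preliminary reductions via Lemmas \ref{Del-Col}, \ref{MGR-NTA} and \ref{T1+T2} are optional. The lemma as stated only asserts existence for the non-excepted pairs, so your nonexistence half is not needed here; the paper uses Table B.2 for that later, in the ``only if'' part of Theorem \ref{MainTH}.

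Two of your side remarks are wrong, however, and should be removed.

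First, $(c,v)=(4,9)$ denotes a $4\times 4$ array on $9$ symbols. It has nothing to do with a $4\times 9$ near Youden rectangle. Such a rectangle is a $(4\times 9,9)$-NTA, i.e.\ the pair $(c,v)=(9,9)$, and Lemma \ref{Del-Col} applied to it would give $(8,9)$, not $(4,9)$.

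Second, your claim that $4\times v$ near Youden rectangles exist for every $v\ge 4$ except $v=9$ is false. It even contradicts the lemma you are proving, since $(9,9)$ lies in the range and is not excepted. Here $e=4$, so each row is a permutation of the $9$ symbols, and $\lambda_{cc}=3/2$. Hence a $(4\times 9,9)$-NTA is exactly a $4\times 9$ near Youden rectangle, and one is easy to write down:
\begin{itemize}
\item Identify the symbols with the cells of a $3\times 3$ grid.
\item Let the column indexed by cell $(i,j)$ contain the other two cells of row $i$ and the other two cells of column $j$.
\item Every symbol lies in exactly four columns.
\item Two columns share one symbol if their cells share a row or a column, and two symbols otherwise.
\item K\"{o}nig's theorem splits the $4$-regular column--symbol incidence graph into four perfect matchings, which serve as the rows.
\end{itemize}

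Neither error breaks your main argument, since every pair is ultimately taken from Table B.2.
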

\begin{proof}
The conclusion follows from Table B.2 in Appendix B of \cite{Gordeev2026}.
\end{proof}

\begin{lem}\label{small-c-v}
A $(4\times c, v)$-NTA exists for any parameters $c$ and $v$ listed below.

\vskip 6pt
\centering
\begin{tabular}{|c c|c c|}
\hline
$c=13$ & \(24\leq v \leq26\) & $c=14$ & \(24\leq v \leq28\) \\
$c=15$ & \(23\leq v \leq30\) & $c=16$ & \(24\leq v \leq32\) \\
$c=17$ & \(24\leq v \leq34\) & $c=18$& \(24\leq v \leq36\) \\
$c=19$ & \(25\leq v \leq38\) & $c=20$ & \(26\leq v \leq40\) \\
$c=21$ & \(25\leq v \leq42\) & $c=22$ & \(25\leq v \leq44\) \\
$c=23$ & \(26\leq v \leq46\) & $c=24$ & \(27\leq v \leq48\) \\
$c=25$ & \(28\leq v \leq50\) &  &  \\\hline
\end{tabular}
\end{lem}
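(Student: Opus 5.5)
The plan is to obtain every pair $(c,v)$ in the table of Lemma \ref{small-c-v} by concatenating two smaller $4$-row NTAs via Lemma \ref{T1+T2}. The ingredients come from Lemma \ref{1small-c-v} ($4\le c\le 12$), Lemma \ref{small1-c-v}, Lemma \ref{direct-Con} and Examples \ref{smallexample1}, \ref{smallexample2}. The few pairs that cannot be split this way would be handled by the matching-design approach of Theorem \ref{CNTA}, with explicit arrays placed in an appendix. For each target $(c,v)$ I would write $c=c_1+c_2$ and $v=v_1+v_2$. The required $(4\times c_1,v_1)$- and $(4\times c_2,v_2)$-NTAs must then satisfy the three hypotheses of Lemma \ref{T1+T2}.

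The first step is to translate those hypotheses into arithmetic conditions. With $r=4$ we have $e_i=4c_i/v_i$. Condition 3 requires $\lambda_{cc}^i=4(\lambda_{rc}^i-1)/(c_i-1)\le 1$. For $e_i\le 2$ this holds automatically, since then $\lambda_{rc}^i\le 2$ and $c_i\ge 5$ suffices. When $e_i\in(2,3]$, it needs $c_i$ large enough relative to $\lambda_{rc}^i$, which is computed from Lemma \ref{Nec-par}(1). Condition 1 forces both pieces into a common window: either all replication numbers lie in $\{2,3\}$ (i.e. $4/3\le v_i/c_i\le 2$), or all lie in $\{1,2\}$ (i.e. $2\le v_i/c_i\le 4$). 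Condition 2 requires that one piece have integral $\lambda_{rr}=c_i(\lambda_{rc}^i-1)/3$. The simplest way to get this is to take one piece with $e_i$ an integer and $c_i(e_i-1)\equiv 0\pmod 3$. Examples are $v_i=c_i$ (so $e=4$), $v_i=2c_i$ (so $e=2$, $\lambda_{rr}=c_i/3$, needing $3\mid c_i$), and $v_i=4c_i/3$ (so $e=3$). The equireplicate NTAs $(4\times 3k,4k)$ from Lemma \ref{direct-Con}, e.g. $(4\times 6,8)$-type pieces obtained from Lemma \ref{1small-c-v}, and $(4\times c_i,2c_i)$ with $3\mid c_i$ are natural candidates for this integral piece.

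The second step is to split each table row by the ratio $v/c$. In the range $v\ge 2c$ (the upper part of each row, up to $v=2c$), I would concatenate a $(4\times 6,12)$ or $(4\times 3t,6t)$ piece ($e=2$, integral $\lambda_{rr}$) with a small piece from Lemma \ref{1small-c-v} whose ratio $v_2/c_2$ lies in $[2,4]$. For instance $(4\times c,v)=(4\times 6,12)\oplus(4\times(c-6),v-12)$, where $v-12\ge 2(c-6)$ holds exactly when $v\ge 2c$. For $v$ below $2c$ but above roughly $4c/3$, I would pair an $e=2$ or $e=3$ integral piece with a piece whose ratio lies in $[4/3,2]$. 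In each case I would choose $c_1\in\{6,9,12\}$ so that the complementary $c_2\le 12$, or $c_2$ lies in the range of Lemma \ref{small1-c-v}. The exceptional pairs $(4,9),(5,7),(5,10),(6,8),(7,9),(10,12),(11,13)$ must be avoided as pieces. I would tabulate the decompositions row by row.

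The main obstacle is the lower-left corner of each row, where $v/c$ is close to $4/3$ and $e$ is near $3$. There, condition 3 is tight: $\lambda_{cc}\le 1$ forces $c_i\ge 4\lambda_{rc}^i-3$, so small pieces fail. Examples are $(13,24)$–$(13,26)$, $(14,24)$, and $(19,25)$–$(25,28)$. For these I would first try concatenating with an $e=4$ piece such as $(4\times 4,4)$ or $(4\times 5,5)$. This fails condition 1 unless both pieces have $e\in[3,4]$, so I would instead look for splits into two pieces that both have $e\in\{2,3\}$ and adequately large $c_i$. Where no split works, I would fall back on a computer search for matching MBMUD pairs as in Theorem \ref{CNTA}, or on small ad hoc modifications of the Lemma \ref{direct-Con} arrays (symbol replacement and column addition, as in $T_1,T_2,T'$), and list the resulting arrays explicitly.
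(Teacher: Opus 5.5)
Your overall framework is the one the paper uses: concatenate two four-row NTAs via Lemma~\ref{T1+T2}, make one piece have integral $\lambda_{rr}$, and draw the pieces from Lemmas~\ref{1small-c-v} and~\ref{small1-c-v}. In fact every entry of the table follows this way, with no new arrays. The gap is in the region you flag as the main obstacle.

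For $19\le c\le 25$ the table contains the pairs with $v<4c/3$, i.e.\ $e=4c/v>3$: $(19,25)$, $(20,26)$, $(21,25)$--$(21,27)$, $(22,25)$--$(22,29)$, $(23,26)$--$(23,30)$, $(24,27)$--$(24,31)$ and $(25,28)$--$(25,33)$. Your remedy there is to split into two pieces with replication numbers in $\{2,3\}$, and this is impossible by counting. The pieces use disjoint symbol sets, so the $4c$ cells would receive at most $3v<4c$ occurrences. Some piece must therefore contain a symbol occurring $4$ times. Condition~1 then forces both pieces into the $\{3,4\}$ window, which you omitted when listing the windows compatible with condition~1. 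Your first attempt, small equireplicate pieces such as $(4\times4,4)$ and $(4\times5,5)$, fails regardless of condition~1: these have $\lambda_{cc}=4$ and $3$. As written, all these entries are left to an unspecified computer search, which is not a proof.

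The missing idea is to take $(4\times9,12)$ as the integral piece. It has $e=3$ exactly, so it fits both the $\{2,3\}$ and the $\{3,4\}$ window, and it has $\lambda_{rr}=6$ and $\lambda_{cc}=1$. Concatenate it with a $(4\times(c-9),v-12)$-NTA having $e\in[3,4]$ and at least $10$ columns; condition~3 for that piece is then checked directly. Examples:
\begin{itemize}
\item $(4\times19,25)$ from $(4\times10,13)$, where $\lambda_{cc}=14/15$;
\item $(4\times21,25)$ from $(4\times12,13)$ and $(4\times22,25)$ from $(4\times13,13)$, both with $\lambda_{cc}=1$;
\item $(4\times25,28)$ from $(4\times16,16)$.
\end{itemize}
This piece, together with $(4\times6,12)$, $(4\times9,18)$, $(4\times12,16)$ and $(4\times9,16)$, covers the whole table. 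Note that $(4\times9,16)$ has $\lambda_{rr}=4$ integral even though $e$ is not. For $(15,23)$, for instance, the paper uses $(4\times9,12)$ with $(4\times6,11)$.

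Two smaller slips:
\begin{itemize}
\item The entries $(13,24)$--$(13,26)$ and $(14,24)$ have $e\le 7/3$, not near $3$. They are immediate as $(4\times6,12)$ concatenated with $(4\times7,v-12)$ or with $(4\times8,12)$.
\item $(4\times6,8)$ cannot serve as an integral piece, since $(6,8)$ is one of the excluded pairs.
\end{itemize}
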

\begin{proof}
In the first and second columns of the table below, all NTAs with at most $12$ columns come from Lemma \ref{1small-c-v}, and the NTAs with $13$ columns in the following table come from Lemma \ref{small1-c-v}. It is readily verified that they satisfy the conditions of Lemma \ref{T1+T2}. Accordingly, we obtain the new NTAs in the third column by applying Lemma \ref{T1+T2}.


\footnotesize{}
\begin{center}
\begin{tabular}{|cc|c|}
\hline
The known NTA & The known NTA & New NTA\\\hline
\((4\times6,\, 12)\)-NTA & \((4\times7,\, v-12)\)-NTA for \(v\in [24,26]\) & \((4\times13,\, v)\)-NTA \\
\((4\times6,\, 12)\)-NTA & \((4\times8,\, v-12)\)-NTA for \(v\in [24,28]\) & \((4\times14,\, v)\)-NTA \\
\((4\times6,\, 11)\)-NTA & \((4\times9,\, 12)\)-NTA & \((4\times15,\, 23)\)-NTA \\
\((4\times6,\, 12)\)-NTA & \((4\times9,\, v-12)\)-NTA for \(v\in [24,30]\) & \((4\times15,\, v)\)-NTA \\
\((4\times9,\, 12)\)-NTA & \((4\times7,\, v-12)\)-NTA for \(v\in [24,26]\) & \((4\times16,\, v)\)-NTA \\
\((4\times6,\, 12)\)-NTA & \((4\times10,\, v-12)\)-NTA for \(v\in [27,32]\) & \((4\times16,\, v)\)-NTA \\
\((4\times9,\, 12)\)-NTA & \((4\times8,\, v-12)\)-NTA for \(v\in [24,28]\) & \((4\times17,\, v)\)-NTA \\
\((4\times6,\, 12)\)-NTA & \((4\times 11,\, v-12)\)-NTA for \(v\in [29,34]\) & \((4\times17,\, v)\)-NTA \\
\((4\times9,\, 12)\)-NTA & \((4\times9,\, v-12)\)-NTA for \(v\in [24,30]\) & \((4\times18,\, v)\)-NTA \\
\((4\times9,\, 18)\)-NTA & \((4\times9,\, v-18)\)-NTA for \(v\in [31,36]\) & \((4\times18,\, v)\)-NTA \\
\((4\times9,\, 12)\)-NTA & \((4\times10,\, v-12)\)-NTA for \(v\in [25,32]\) & \((4\times19,\, v)\)-NTA \\
\((4\times9,\, 16)\)-NTA & \((4\times10,\, v-16)\)-NTA for \(v\in [33,36]\) & \((4\times19,\, v)\)-NTA \\
\((4\times9,\, 18)\)-NTA & \((4\times10,\, v-18)\)-NTA for \(v\in [37,38]\) & \((4\times19,\, v)\)-NTA \\
\((4\times9,\, 12)\)-NTA & \((4\times11,\, v-12)\)-NTA for \(v\in [26,32]\) & \((4\times20,\, v)\)-NTA \\
\((4\times9,\, 18)\)-NTA & \((4\times11,\, v-18)\)-NTA for \(v\in [33,40]\) & \((4\times20,\, v)\)-NTA \\
\((4\times9,\, 12)\)-NTA & \((4\times12,\, v-12)\)-NTA for \(v\in [25,28]\) & \((4\times21,\, v)\)-NTA \\
\((4\times12,\, 16)\)-NTA & \((4\times9,\, v-16)\)-NTA for \(v\in [29,34]\) & \((4\times21,\, v)\)-NTA \\
\((4\times9,\, 18)\)-NTA & \((4\times12,\, v-18)\)-NTA for \(v\in [35,42]\) & \((4\times21,\, v)\)-NTA \\
\((4\times9,\, 12)\)-NTA & \((4\times13,\, v-12)\)-NTA for \(v\in [25,35]\) & \((4\times22,\, v)\)-NTA \\\hline
\end{tabular}
\end{center}

\normalsize{}
For the conclusions in the table below, as in the proof above, the known NTAs in the first column come from Lemma \ref{1small-c-v}, while those in the second column come from Lemma \ref{small1-c-v} or the results already proved above. It is readily verified that they satisfy the conditions of Lemma \ref{T1+T2}. Accordingly, we derive the new NTAs in the third column by applying Lemma \ref{T1+T2}.

\footnotesize{}
\begin{center}
\begin{tabular}{|cc|c|}
\hline
The known NTA & The known NTA & New NTA\\\hline
\((4\times9,\, 18)\)-NTA & \((4\times13,\, v-18)\)-NTA for \(v\in [36,44]\) & \((4\times22,\, v)\)-NTA \\
\((4\times9,\, 12)\)-NTA & \((4\times14,\, v-12)\)-NTA for \(v\in [26,36]\) & \((4\times23,\, v)\)-NTA \\
\((4\times9,\, 18)\)-NTA & \((4\times14,\, v-18)\)-NTA for \(v\in [37,46]\) & \((4\times23,\, v)\)-NTA \\
\((4\times9,\, 12)\)-NTA & \((4\times15,\, v-12)\)-NTA for \(v\in [27,37]\) & \((4\times24,\, v)\)-NTA \\
\((4\times9,\, 18)\)-NTA & \((4\times15,\, v-18)\)-NTA for \(v\in [38,48]\) & \((4\times24,\, v)\)-NTA \\
\((4\times9,\, 12)\)-NTA & \((4\times16,\, v-12)\)-NTA for \(v\in [28,39]\) & \((4\times25,\, v)\)-NTA \\
\((4\times9,\, 18)\)-NTA & \((4\times16,\, v-18)\)-NTA for \(v\in [40,50]\) & \((4\times25,\, v)\)-NTA \\ \hline
\end{tabular}
\end{center}
\end{proof}

\begin{lem}[\!\cite{Gordeev2026}, Lemma 5.2]\label{Large-v+i}
Let $T$ be an $(r \times c, v)$-NTA with $v \geq rc - c \cdot \min(r, c - 1)/2$. For each $0 < i \leq rc - v$, there exists an $(r \times c, v + i)$-NTA.
\end{lem}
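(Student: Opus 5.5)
The plan is to prove the statement by induction on $i$. It suffices to show that every $(r\times c,v)$-NTA $T$ with $v\ge rc-c\cdot\min(r,c-1)/2$ and $v<rc$ yields an $(r\times c,v+1)$-NTA. The hypothesis on $v$ is inherited by $v+1$, so iterating gives all $0<i\le rc-v$.

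\textbf{Step 1: reduce to symbols occurring once or twice.} The hypothesis gives $rc-v\le c\cdot\min(r,c-1)/2$.
\begin{itemize}
\item Since $rc-v\le rc/2$, we get $v\ge rc/2$, so $e=rc/v\le 2$. Hence $e^-=1$ and $e^+=2$ (or $e=2$ exactly), and every symbol occurs once or twice.
\item The number of symbols occurring twice is exactly $rc-v$.
\end{itemize}
Consequently two rows (respectively two columns) share only doubled symbols, and the total of the intersection numbers over all pairs of rows, and likewise over all pairs of columns, equals $rc-v$.

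Since $rc-v\le c(c-1)/2$, the average satisfies $\lambda_{cc}\le 1$, so all column intersections lie in $\{0,1\}$. For a row $R_i$ and a column $C_j$, we have $|R_i\cap C_j|=1+{}$(the number of doubled symbols in $C_j$ whose other occurrence lies in row $i$). This lies in $\{1,2\}$ automatically, since the occurrence in cell $(i,j)$ cannot be the one whose partner is in $C_j$.

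\textbf{Step 2: the move.} Choose a symbol $s$ occurring twice and replace one of its occurrences by a new symbol. The array stays binary, and the new array has $v+1$ symbols, each occurring once or twice; since $v+1\le rc$ this is near equireplicate, or equireplicate in the extreme cases. By Lemma~\ref{Nec-par}, the new $\lambda_{rc}$ lies in $[1,2]$, so the row--column condition is automatic by Step~1. The new $\lambda_{cc}$ is in $[0,1]$. Column intersections can only decrease and remain in $\{0,1\}$. When the new average is exactly $0$, we have $rc-(v+1)=0$, so no doubled symbol remains and all column intersections are $0$. Thus the column condition also holds.

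\textbf{Step 3: the row condition (the main point).} The row intersection numbers currently take values in $\{a,a+1\}$ with $a=\lambda_{rr}^-$, and after the move their total drops by exactly one. I would choose $s$ according to two cases.
\begin{itemize}
\item If some pair of rows has intersection $a+1$, take $s$ to be a doubled symbol lying in such a pair; one exists precisely because that intersection is at least $1$. After the move all counts stay in $\{a,a+1\}$, and the new average lies in $[a,a+1]$, so its floor and ceiling still cover these values.
\item If all pairs have intersection exactly $a$, the average was the integer $a$. The new average is $a-1/\binom r2$, whose floor and ceiling are $a-1$ and $a$, so splitting any doubled symbol works.
\end{itemize}
The care needed is the bookkeeping of floors and ceilings at integer averages, in both the row and column counts. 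This is where I would verify the formulas of Lemma~\ref{Nec-par} most carefully. Everything else is routine, and induction completes the proof.
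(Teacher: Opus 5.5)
Your overall plan is sound: induct on $i$, and split one occurrence of a doubled symbol, chosen among the common symbols of a pair of rows meeting in $\lambda_{rr}^{+}$ symbols whenever such a pair exists. The identification of the averages with the formulas of Lemma~\ref{Nec-par} is correct, as are the column argument (intersections only decrease and stay in $\{0,1\}$) and the floor/ceiling bookkeeping for the rows in Step~3. The present paper only cites this lemma from \cite{Gordeev2026} and gives no proof of its own, so your argument has to stand by itself.

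The step that fails is the claim in Step~1, which Step~2 invokes again, that $|R_i\cap C_j|\in\{1,2\}$ holds \emph{automatically}. Consider the doubled symbols of $C_j$ whose partners lie in row $i$. Those partners sit in distinct columns $j'\neq j$, so neither binarity nor $\lambda_{cc}\le 1$ limits their number to one. Your stated reason only explains why the count starts at $1$.

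Here is a concrete counterexample: the $3\times 3$ array with rows $(a,b,c)$, $(b,d,e)$, $(c,f,g)$. It satisfies the hypothesis ($rc-v=2\le 3$), is binary and near equireplicate, and even meets the row and column conditions (all intersections lie in $\{0,1\}$, with $\lambda_{rr}=\lambda_{cc}=2/3$). Yet $|R_1\cap C_1|=3$.

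The repair is to use the fact that $T$ itself is an NTA. By Lemma~\ref{Nec-par}(2), its row--column intersections lie in $\{e^-,e^+\}\subseteq\{1,2\}$. Suppose $s$ occupies cells $(i_0,j_0)$ and $(i_1,j_1)$, and the entry at $(i_0,j_0)$ is replaced by a new symbol. Then the only row--column intersections that change are $|R_{i_0}\cap C_{j_1}|$ and $|R_{i_1}\cap C_{j_0}|$, each dropping by one. Every row--column intersection still contains the entry of the common cell, so it stays at least $1$. Hence all row--column intersections remain in $\{1,2\}$. This is exactly $\{\lambda_{rc}^-,\lambda_{rc}^+\}$ for the new array when $v+1<rc$. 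When $v+1=rc$, every symbol is single and each intersection equals $1=\lambda_{rc}$; your phrase ``$\lambda_{rc}\in[1,2]$'' glosses over this endpoint. With this replacement for the row--column step, your proof is complete.
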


By Lemma \ref{small-c-v} and Lemma \ref{Large-v+i}, the following result can be immediately obtained.
\begin{lem}\label{v-rc}
There exists a $(4 \times c, v)$-NTA for any $13\leq c\leq 25$ and $2c\leq v \leq 4c $.
\end{lem}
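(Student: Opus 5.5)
The plan is to read Lemma \ref{v-rc} off Lemma \ref{small-c-v} together with Lemma \ref{Large-v+i}, with $r=4$. For each fixed $c$ with $13\leq c\leq 25$, Lemma \ref{small-c-v} provides a $(4\times c,v_0)$-NTA for every $v_0$ in a range whose upper end is exactly $2c$. For instance, $c=13$ gives $v_0\le 26$, $c=14$ gives $v_0\le 28$, and in general the upper end is $2c$ throughout. So the values $v\le 2c$ are already covered by that table, and in particular $v=2c$ is.

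The key step is to apply Lemma \ref{Large-v+i} to the single seed array with $v_0=2c$. The hypothesis required there is
$$v_0\geq rc-c\cdot\min(r,c-1)/2.$$
Since $c\geq 13$, we have $\min(4,c-1)=4$, so the right-hand side equals $4c-2c=2c$. The seed $v_0=2c$ therefore meets the hypothesis with equality. Lemma \ref{Large-v+i} then yields a $(4\times c,2c+i)$-NTA for every $0<i\leq 4c-2c=2c$. Together with the seed itself, this covers every $v$ with $2c\leq v\leq 4c$.

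There is essentially no obstacle here. The only thing to check is that, for each $c\in[13,25]$, the table in Lemma \ref{small-c-v} really contains $v=2c$. Its upper bounds are $26,28,30,\dots,50$ for $c=13,\dots,25$, so it does. It is also worth noting that the threshold in Lemma \ref{Large-v+i} is precisely $2c$, which is exactly why the table was built up to $2c$.
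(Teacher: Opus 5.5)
Your proposal is correct and follows the paper's route: the paper derives Lemma \ref{v-rc} directly from Lemma \ref{small-c-v}, whose range for each $c$ ends at $v=2c$, and Lemma \ref{Large-v+i} with $r=4$, where $\min(4,c-1)=4$ makes the threshold $4c-2c=2c$. You have just written out the verification that the paper leaves implicit.
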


\begin{lem}[\!\!\cite{Gordeev2026}, Lemma 5.5]\label{Large-v}
There exists an $(r \times c, v)$-NTA whenever $v \geq rc - \frac{c}{2}$.
\end{lem}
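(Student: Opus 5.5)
The statement to prove is Lemma \ref{Large-v}: for $v\ge rc-\frac{c}{2}$ an $(r\times c,v)$-NTA exists. The plan is a direct construction, which then also feeds Lemma \ref{Large-v+i} for the intermediate range. If $v\ge rc$, fill the $rc$ cells with distinct symbols, as already noted in the definition. So assume $rc-\frac c2\le v<rc$ and put $s=rc-v$, so that $0<s\le c/2$. Then $e=rc/v\in(1,2)$, with $e^-=1$ and $e^+=2$. By Lemma 2.1, exactly $s$ symbols occur twice and the remaining $v-s$ occur once.

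The design I will aim for places each repeated symbol in two cells that lie in distinct rows and distinct columns, using $2s\le c$ pairwise distinct columns in total. Concretely, for $t=1,\dots,s$, put symbol $t$ in cell $(1,2t-1)$ and in cell $(2,2t)$; when $r\ge 3$ it is better to spread the row pairs. Every other cell receives its own fresh symbol. The resulting array is binary, because the two copies of a symbol share neither a row nor a column.

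Next I check the intersections against Lemma \ref{Nec-par}.
\begin{itemize}
\item Column pairs: every column contains at most one repeated symbol, and the columns are used disjointly, so two columns share at most one symbol. Since $\lambda_{cc}=r(\lambda_{rc}-1)/(c-1)$ and $\lambda_{rc}-1<1$, we get $\lambda_{cc}<r/(c-1)$. The claim is that $\lambda_{cc}\le1$ here, so that column intersections in $\{0,1\}$ are allowed. I need to verify this for $s\le c/2$; if it fails for small $c$, the pairs $\{0,1\}$ should still equal $\{\lambda_{cc}^-,\lambda_{cc}^+\}$ whenever $\lambda_{cc}\in(0,1)$.
\item Row and column: $\lambda_{rc}\in(1,2)$, so the allowed values are $1$ and $2$. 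A column $j$ meets row $i$ in the symbols of $j$ that occur in row $i$. That is $1$ (the cell $(i,j)$ itself), plus $1$ if column $j$ contains a repeated symbol whose other copy lies in row $i$. So every row–column intersection is $1$ or $2$.
\item Row pairs: the intersection of rows $i$ and $i'$ equals the number of repeated symbols occupying exactly that row pair. The average $\lambda_{rr}=c(\lambda_{rc}-1)/(r-1)$ corresponds to distributing $s$ pairs over $\binom r2$ row pairs. So I must assign row pairs to the $s$ repeated symbols so that every row pair receives $\lfloor\cdot\rfloor$ or $\lceil\cdot\rceil$ of them. At the same time each row should hold at most $c$ cells; the load per row is about $2s/r\le c/r$, which is fine.
\end{itemize}

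The main obstacle is choosing this assignment so that the row-pair counts are balanced, i.e.\ lie in $\{\lambda_{rr}^-,\lambda_{rr}^+\}$, while the column positions remain distinct. I would cycle through the $\binom r2$ row pairs in round-robin order, which makes the counts differ by at most one. I would also confirm that the average row-pair count $s/\binom r2$ matches $\lambda_{rr}$ as computed from Lemma \ref{Nec-par}(3). Columns are then assigned greedily, giving each symbol two fresh columns; this never conflicts, since $2s\le c$.
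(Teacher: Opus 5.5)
Your construction is correct and gives a complete proof. The paper itself does not prove this statement; it imports it from \cite{Gordeev2026}, so there is no in-paper argument to compare against, and yours stands on its own.

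The two points you left as ``to verify'' are one-line computations, and you should simply carry them out. With $e^-=1$ and $e^+=2$, Lemma \ref{Nec-par}(1) gives
\[
\lambda_{rc}=3-\frac{2}{e}=1+\frac{2s}{rc}.
\]
Hence
\[
\lambda_{rr}=\frac{s}{\binom{r}{2}},\qquad \lambda_{cc}=\frac{s}{\binom{c}{2}}\le \frac{1}{c-1}.
\]
The first value is exactly the average your round-robin assignment of row pairs realises, with counts $\lfloor s/\binom{r}{2}\rfloor$ or $\lceil s/\binom{r}{2}\rceil$. The second shows that $\{0,1\}$ are the admissible column--column values for $c\ge 3$. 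For $c=2$ you get $\lambda_{cc}=1$, and the two columns do share exactly one symbol.

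Your bound $\lambda_{cc}<r/(c-1)$ alone would not give $\lambda_{cc}\le 1$ when $r\ge c-1$; the exact value is what closes this.

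Also, your first concrete placement, with every repeated symbol in rows $1$ and $2$, is only valid for $r=2$. For $r\ge 3$ it is the round-robin assignment that you actually need.

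The hypothesis $v\ge rc-\frac c2$ enters precisely through $2s\le c$. This lets the $s$ repeated symbols occupy pairwise disjoint column pairs, so each column carries at most one repeated symbol. That single fact gives column intersections in $\{0,1\}$ and row--column intersections in $\{1,2\}$ simultaneously.

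Since your construction treats every $s\le c/2$ directly, you do not need Lemma \ref{Large-v+i} at all. Your opening remark that it ``feeds'' that lemma is unnecessary for this statement.
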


\vskip 6pt
\textbf{Proof of Theorem \ref{MainTH}: }\
(1) For all integers $c\leq 12$ and $v < 4c - \frac{c}{2}$, a $(4 \times c, v)$-NTA exists except for $(c,v)\in\{ (4,9),(5,7),(5,10),(6,8),(7,9),(10,12),(11,13)\}$. In fact, Appendix B-Table B.2 of \cite{Gordeev2026} gives existence
in exactly the positive entries of the table; the only zero entries in the admissible range are the seven exceptional pairs listed above.
For $c\leq 12$ and $v \geq 4c - \frac{c}{2}$,
the existence of a $(4 \times c, v)$-NTA follows directly from Lemma \ref{Large-v}.

(2) For $13\leq c\leq 25$ and $v \leq 4c - \frac{c}{2}$, the existence of a $(4 \times c, v)$-NTA is ensured by Lemma \ref{small1-c-v}, Lemma \ref{small-c-v} and Lemma \ref{v-rc}. For $13\leq c\leq 25$ and $v \geq 4c - \frac{c}{2}$, by Lemma \ref{Large-v}, there exists a $(4 \times c, v)$-NTA.

(3) For $26\leq c\leq v$, we proceed by induction on $c$ , where the cases $12\leq c\leq25$ serve as the base case. Note that for any $(4\times c, v)$-NTA with $c\geq 13$, we have $\lambda_{cc} = \frac{4(\lambda_{rc} - 1)}{c - 1} \leq \frac{12}{c - 1} \leq 1$.

Let $c \geq 26$. If $v \geq 4c - \frac{c}{2}=\frac{7c}{2}$, the existence of a $(4 \times c, v)$-NTA follows from Lemma \ref{Large-v}.

For $c \leq v \leq c + 3$, we consider two near triple arrays with parameters $(4 \times 13, 13)$-NTA and $(4 \times (c-13), v-13)$-NTA. The former satisfies $e = 4$, $\lambda_{rr} = 13 \in \mathbb{Z}$ and $\lambda_{cc} = 1$, while the latter has $e^+ = 4$ and $\lambda_{cc} \leq 1$. Applying Lemma \ref{T1+T2}, we obtain the desired $(4 \times c, v)$-NTA.

For $c + 4 \leq v \leq 2c-6$, we consider near triple arrays with parameters $(4 \times 9, 12)$-NTA and $(4 \times(c-9), v-12)$-NTA. The former has $e = 3$, $\lambda_{rr} = 6 \in \mathbb{Z}$ and $\lambda_{cc} = 1$, and the latter has either $e^- = 3$ or $e^- = 2$ and $\lambda_{cc} \leq 1$. Therefore, there exists a $(4 \times c, v)$-NTA by Lemma \ref{T1+T2}.

For $2c - 6 < v < \frac{7c}{2}$, we consider near triple arrays with parameters $(4 \times 6, 12)$-NTA and $(4 \times(c-6), v-12)$-NTA. The first one has $e = 2$, $\lambda_{rr} = 2 \in \mathbb{Z}$ and $\lambda_{cc} < 1$, the second has either $e^+ = 2$ or $e^- = 2$ and $\lambda_{cc} \leq 1$. Applying Lemma \ref{T1+T2}, we obtain the desired $(4 \times c, v)$-NTA. \qed

\begin{lem}[\!\!\cite{Gordeev2026}, Lemma 5.9]\label{Dual}
If there exists an $(r \times n, n)$-NTA, then there exists an $((n-r) \times n, n)$-NTA and an $((n-r) \times (n-1), n)$-NTA.
\end{lem}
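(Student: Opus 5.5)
The plan is to take the complementary Latin rectangle. In an $(r\times n,n)$-NTA we have $c=v=n$ and $e=rn/n=r$, an integer, so the design is equireplicate. By binarity each row contains $n$ distinct symbols out of $n$, i.e.\ every row is a permutation of the symbol set, so $T$ is an $r\times n$ Latin rectangle. Let $C_j$ be the symbol set of column $j$, with $|C_j|=r$.

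\textbf{Step 1 (completion).} Form the bipartite graph between the columns and the symbols, joining column $j$ to every symbol not in $C_j$. Each column has degree $n-r$. Each symbol occurs exactly once in each row, hence in exactly $r$ columns, so each symbol also has degree $n-r$. A regular bipartite graph decomposes into perfect matchings (Hall's theorem / K\"onig). Each perfect matching supplies one new row, so we obtain $n-r$ new rows $S$, forming an $(n-r)\times n$ Latin rectangle whose $j$-th column has symbol set $\overline{C_j}$, the complement of $C_j$. This is the step I expect to need the most care, but it is the classical Latin rectangle completion argument. The degenerate case $r=n$ is vacuous.

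\textbf{Step 2 (parameter check).} $S$ is binary and equireplicate with $e'=n-r$.
\begin{itemize}
\item Any two rows of $S$ contain all $n$ symbols, so they meet in $n=\lambda_{rr}'$ symbols, which is an integer.
\item Any row meets any column in $n-r=e'=\lambda_{rc}'$ symbols, by Lemma~\ref{Nec-par} in the equireplicate case.
\item For columns, $|\overline{C_i}\cap\overline{C_j}|=n-2r+|C_i\cap C_j|$.
\end{itemize}
For the column condition, compare averages. Writing $\lambda_{cc}=r(r-1)/(n-1)$, we have
$$n-2r+\frac{r(r-1)}{n-1}=\frac{(n-r)^2-(n-r)}{n-1}=\frac{(n-r)(n-r-1)}{n-1}=\lambda_{cc}'.$$
So the map $x\mapsto x+(n-2r)$ shifts by an integer and sends $\{\lambda_{cc}^-,\lambda_{cc}^+\}$ onto $\{\lambda_{cc}'^-,\lambda_{cc}'^+\}$. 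Hence every pair of columns of $S$ meets in $\lambda_{cc}'^-$ or $\lambda_{cc}'^+$ symbols, and $S$ is an $((n-r)\times n,n)$-NTA.

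\textbf{Step 3.} Applying Lemma~\ref{Del-Col} to $S$, which is valid since $n-r\le n$, yields the $((n-r)\times(n-1),n)$-NTA.
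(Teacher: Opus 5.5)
Your proof is correct and follows the standard argument for this result. The paper gives no proof of its own and only cites Lemma 5.9 of \cite{Gordeev2026}. That argument, as I recall it, is the same as yours: complete the Latin rectangle to a Latin square and take the complementary $n-r$ rows. The row-row and row-column intersections become the constants $n$ and $n-r$, and the column intersections shift by the integer $n-2r$. Lemma~\ref{Del-Col} then gives the $(n-1)$-column case.
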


\begin{cor}
There exist $((n-4) \times n, n)$-NTAs and $((n-4) \times (n-1), n)$-NTAs for any $n > 4$.
\end{cor}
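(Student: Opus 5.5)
The corollary should follow by feeding the $r=4$ case of Theorem~\ref{MainTH} with $c=v=n$ into Lemma~\ref{Dual}.

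\emph{Step 1: a $(4\times n,n)$-NTA exists for every $n\ge 4$ outside a finite set.} Theorem~\ref{MainTH} gives existence whenever $v\ge c\ge 4$, apart from seven exceptional pairs. None of these pairs has $c=v$: they are $(4,9),(5,7),(5,10),(6,8),(7,9),(10,12),(11,13)$. So a $(4\times n,n)$-NTA exists for every $n\ge 4$. For $n\ge 13$ this also follows directly from Lemma~\ref{4nn} via the $(n,4)$-MGRs of Lemma~\ref{Known-MGR}(2). For $4\le n\le 12$ it comes from Lemma~\ref{1small-c-v}, since the condition $c\le v\le 7c/2$ holds with $c=v=n$.

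\emph{Step 2: apply Lemma~\ref{Dual}.} With $r=4\le n$, Lemma~\ref{Dual} yields both an $((n-4)\times n,n)$-NTA and an $((n-4)\times(n-1),n)$-NTA for every $n\ge 4$. The row count $n-4$ is positive exactly when $n>4$, which is the hypothesis of the corollary.

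\emph{Main obstacle: degenerate small cases.} For $n=5,6,7$ the dual has $1$, $2$ or $3$ rows. The paper elsewhere restricts attention to $r,c\ge 3$, so I need Lemma~\ref{Dual} to hold without a lower bound on $r$. I believe it does: the dual is obtained by complementing each column's symbol set within $\mathbb{Z}_n$ and rearranging into rows via a Latin-type argument, and this imposes no size restriction. If the lemma in \cite{Gordeev2026} does carry such a restriction, the cases $n\le 6$ would need a direct check instead, which is trivial for one or two rows.
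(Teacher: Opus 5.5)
Your proposal is correct and is essentially the paper's own argument. The paper's proof likewise applies Lemma~\ref{Dual} with $r=4$ to the $(4\times n,n)$-NTAs from Theorem~\ref{MainTH}, which exist for every $n\ge 4$ because none of the seven exceptional pairs has $c=v$. Your extra caution about the degenerate row counts $n-4\in\{1,2,3\}$ goes beyond the paper, which applies Lemma~\ref{Dual} as stated with no lower bound on the number of rows.
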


\begin{proof}
This follows immediately from Theorem \ref{MainTH} and Lemma \ref{Dual} with $r=4$ and $c = v = n$.
\end{proof}

\section{Conclusions}

In this paper, we have completely resolved the existence of near triple arrays with four rows. It is natural to pose the next research problem: determining the existence of $(5\times c,v)$-NTA or, more generally, $(r\times c,v)$-NTA for $r\geq 6$. The existence of $(5\times c,v)$-NTA  for sufficiently large $c$
may potentially be proved using a similar approach to that in the proof of Theorem \ref{MainTH}. By Theorem 6.4 in \cite{Gordeev2026}, there is no $(5\times 19,21)$-NTA. Let $v \ge k \ge 2$. A $(v,k,\lambda)$ \emph{packing} is a pair \((X, \mathcal{B})\), where \(X\) is a \(v\)-set of elements (\emph{points}) and \(\mathcal{B}\) is a collection of \(k\)-subsets of \(X\) (\emph{blocks}), such that every pair of distinct points occurs in at most \(\lambda\) blocks in \(\mathcal{B}\). If a $(5\times 22,22)$-NTA exists, then treating its columns as blocks yields a $(22,5,1)$ packing consisting of $22$ blocks. However, there is no
$(22,5,1)$ packing consisting of $22$ blocks, see Theorem 5 in \cite{DFGMP-2016}, and then we have the following result.

\begin{lem}
There is no $(5 \times 22, 22)$-NTA.
\end{lem}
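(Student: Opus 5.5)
The plan is to argue by contradiction and reduce to the packing bound, exactly as indicated just before the statement. Suppose $T$ is a $(5\times 22,22)$-NTA. First I will compute the average parameters from Lemma \ref{Nec-par}. Here $e=\frac{5\cdot 22}{22}=5$ is an integer, so $T$ is equireplicate and $\lambda_{rc}=5$. Then
\[
\lambda_{cc}=\frac{5(5-1)}{22-1}=\frac{20}{21},
\]
so $\lambda_{cc}^-=0$ and $\lambda_{cc}^+=1$. By the column condition in the definition of an NTA, any two distinct columns of $T$ therefore share at most one symbol.

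Next I will pass to a set system on the symbols. Because $T$ is binary, each column contains $5$ distinct symbols. Take the $22$ symbols as points, and let each of the $22$ columns give the $5$-subset of symbols it contains. Suppose a pair of symbols $\{x,y\}$ lay in two different columns. Those two columns would then share both $x$ and $y$, contradicting $\lambda_{cc}^+=1$. So every pair of points lies in at most one block. This also forces the $22$ column-sets to be distinct, since two equal column-sets would share $5>1$ symbols. Hence the columns form a $(22,5,1)$ packing with exactly $22$ blocks.

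Finally I will invoke Theorem 5 of \cite{DFGMP-2016}, which says no $(22,5,1)$ packing has $22$ blocks. This contradicts the previous step, so no $(5\times 22,22)$-NTA exists.

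The only substantive step is that cited nonexistence result. A naive counting argument does not reach it: $22$ blocks cover $22\cdot 10=220\le\binom{22}{2}=231$ pairs, and each point lies in $5$ blocks, which needs $20\le 21$ other points. So the obstruction is genuinely external, and I will rely on the citation rather than reprove it. Everything else is the routine parameter check above.
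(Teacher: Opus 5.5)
Your proposal is correct and follows the paper's own argument: the columns of a putative $(5\times 22,22)$-NTA form a $(22,5,1)$ packing with $22$ blocks, and Theorem 5 of \cite{DFGMP-2016} rules such a packing out. Your check that $\lambda_{cc}=20/21$ (so $\lambda_{cc}^+=1$) and your remark that each symbol lies in exactly $5$ columns (so the packing is a symmetric configuration $22_5$) spell out details the paper leaves implicit.
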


Therefore, to fully settle the existence of near triple arrays with five rows, the $(5 \times 22, 22)$-NTA cannot serve as the base of this recurrence. By Lemma \ref{4nn}, a $(5 \times 23, 23)$-NTA exists and may serve as one of the initial cases for a prospective recurrence. However, additional base cases and the conditions needed to cover the intended five-row parameter range remain to be established.

\vskip 12pt
\noindent\textbf{Declaration of competing interest}

The authors declare that there are no conflicts of interest.

\vskip 12pt
\noindent\textbf{Data availability}

No data was used for the research described in the article.

\vskip 12pt
\noindent\textbf{Acknowledgements}

This work was supported by National Natural Science Foundation of China (Grant No. 12471245) and Natural Science Foundation of Henan Province (Grant No. 262300421846).


\newpage

\begin{center}
\textbf{Appendix A}
\end{center}

\footnotesize{}
{
\renewcommand{\arraystretch}{1.1}  
\setlength{\tabcolsep}{3.5pt}         

A $(16, \{3,4\}, 10, \{0,1\}, \{3,4\}: 4\times 13)$-NTA:
\[
\begin{array}{|ccccccccccccc|}
\hline
1 & 4 & 11 & 0 & 12 & 10 & 6 & 15 & 5 & 2 & 3 & 13 & 14 \\
2 & 10 & 0 & 9 & 1 & 5 & 11 & 4 & 7 & 6 & 12 & 8 & 3 \\
0 & 7 & 5 & 6 & 4 & 14 & 1 & 2 & 13 & 8 & 15 & 3 & 9 \\
3 & 0 & 8 & 12 & 13 & 1 & 15 & 9 & 2 & 14 & 7 & 10 & 11 \\
\hline
\end{array}
\]

A $(17, \{3,4\}, 9, \{0,1\}, \{3,4\}: 4\times 13)$-NTA:
\[
\begin{array}{|ccccccccccccc|}
\hline
5 & 0 & 2 & 4 & 12 & 7 & 3 & 1 & 8 & 6 & 10 & 11 & 9 \\
0 & 3 & 15 & 6 & 16 & 14 & 13 & 8 & 2 & 1 & 4 & 5 & 7 \\
9 & 10 & 0 & 11 & 2 & 4 & 12 & 13 & 14 & 15 & 16 & 3 & 1 \\
13 & 14 & 7 & 0 & 5 & 12 & 6 & 11 & 9 & 10 & 8 & 15 & 16 \\\hline
\end{array}
\]

An $(18, \{2,3\}, \{8,9\}, \{0,1\}, \{2,3\}: 4\times 13)$-NTA:
\[
\begin{array}{|ccccccccccccc|}
\hline
0 & 11 & 2 & 7 & 6 & 10 & 9 & 13 & 12 & 8 & 3 & 4 & 5 \\
14 & 0 & 9 & 3 & 4 & 2 & 15 & 16 & 7 & 5 & 6 & 8 & 17 \\
10 & 15 & 1 & 4 & 14 & 16 & 3 & 5 & 2 & 12 & 17 & 13 & 11 \\
13 & 16 & 6 & 1 & 11 & 8 & 12 & 7 & 17 & 14 & 10 & 15 & 9 \\
\hline
\end{array}
\]

A $(19, \{2,3\}, \{7,8\}, \{0,1\}, \{2,3\}: 4\times 13)$-NTA:
\[
\begin{array}{|ccccccccccccc|}
\hline
0 & 12 & 8 & 1 & 7 & 2 & 10 & 6 & 13 & 11 & 5 & 14 & 9 \\
18 & 0 & 15 & 9 & 5 & 16 & 17 & 3 & 4 & 8 & 10 & 6 & 7 \\
11 & 16 & 1 & 13 & 15 & 6 & 3 & 12 & 5 & 7 & 18 & 17 & 14 \\
15 & 13 & 10 & 17 & 2 & 18 & 11 & 9 & 14 & 4 & 12 & 8 & 16 \\
\hline
\end{array}
\]

A $(20, \{2,3\}, \{7,8\}, \{0,1\}, \{2,3\}: 4\times 13)$-NTA:
\[
\begin{array}{|ccccccccccccc|}
\hline
14 & 0 & 1 & 9 & 11 & 10 & 8 & 12 & 13 & 4 & 16 & 5 & 15 \\
0 & 18 & 7 & 1 & 8 & 19 & 13 & 11 & 17 & 12 & 9 & 10 & 6 \\
6 & 15 & 14 & 17 & 16 & 2 & 3 & 9 & 4 & 8 & 19 & 18 & 10 \\
17 & 7 & 19 & 15 & 2 & 12 & 14 & 3 & 16 & 18 & 5 & 13 & 11 \\
\hline
\end{array}
\]

A $(21, \{2,3\}, \{6,7\}, \{0,1\}, \{2,3\}: 4\times 13)$-NTA:
\[
\begin{array}{|ccccccccccccc|}
\hline
6 & 0 & 1 & 15 & 2 & 17 & 7 & 18 & 14 & 11 & 12 & 13 & 16 \\
0 & 8 & 14 & 9 & 19 & 20 & 3 & 11 & 13 & 4 & 15 & 16 & 12 \\
19 & 20 & 10 & 1 & 11 & 12 & 13 & 3 & 18 & 8 & 5 & 17 & 6 \\
17 & 18 & 20 & 19 & 10 & 2 & 15 & 16 & 4 & 7 & 14 & 5 & 9 \\
\hline
\end{array}
\]

A $(22, \{2,3\}, \{6,7\}, \{0,1\}, \{2,3\}: 4\times 13)$-NTA:
\[
\begin{array}{|ccccccccccccc|}
\hline
0 & 16 & 1 & 14 & 2 & 8 & 18 & 15 & 7 & 12 & 6 & 19 & 17 \\
15 & 0 & 4 & 10 & 9 & 3 & 20 & 6 & 12 & 17 & 21 & 16 & 14 \\
20 & 5 & 15 & 1 & 21 & 13 & 3 & 11 & 19 & 9 & 7 & 14 & 18 \\
2 & 21 & 18 & 20 & 19 & 17 & 16 & 8 & 10 & 11 & 13 & 4 & 5 \\
\hline
\end{array}
\]

A $(23, \{2,3\}, \{5,6\}, \{0,1\}, \{2,3\}: 4\times 13)$-NTA:
\[
\begin{array}{|ccccccccccccc|}
\hline
0 & 21 & 18 & 1 & 2 & 19 & 20 & 8 & 6 & 13 & 7 & 17 & 12 \\
17 & 0 & 4 & 10 & 18 & 22 & 14 & 3 & 9 & 6 & 19 & 15 & 20 \\
5 & 16 & 1 & 21 & 9 & 14 & 3 & 18 & 11 & 22 & 12 & 7 & 17 \\
22 & 19 & 15 & 20 & 21 & 2 & 13 & 5 & 8 & 16 & 10 & 11 & 4 \\
\hline
\end{array}
\]

A $(17, \{3,4\}, 11, \{0,1\}, \{3,4\}: 4\times 14)$-NTA:
\[
\begin{array}{|cccccccccccccc|}
\hline
0 & 4 & 5 & 12 & 1 & 3 & 13 & 7 & 10 & 9 & 2 & 8 & 6 & 11 \\
3 & 0 & 15 & 9 & 4 & 1 & 8 & 14 & 2 & 5 & 6 & 7 & 16 & 10 \\
14 & 7 & 0 & 16 & 15 & 5 & 6 & 1 & 13 & 11 & 12 & 2 & 4 & 3 \\
8 & 11 & 13 & 0 & 9 & 12 & 1 & 10 & 4 & 2 & 14 & 16 & 3 & 15 \\
\hline
\end{array}
\]

An $(18, \{3,4\}, 10, \{0,1\}, \{3,4\}: 4\times 14)$-NTA:
\[
\begin{array}{|cccccccccccccc|}
\hline
0 & 13 & 2 & 1 & 8 & 3 & 9 & 4 & 11 & 5 & 6 & 10 & 12 & 7 \\
6 & 1 & 0 & 2 & 3 & 7 & 4 & 8 & 5 & 9 & 17 & 16 & 15 & 14 \\
11 & 15 & 10 & 16 & 14 & 0 & 1 & 12 & 17 & 13 & 2 & 4 & 3 & 5 \\
14 & 7 & 15 & 12 & 1 & 17 & 11 & 0 & 8 & 16 & 13 & 6 & 9 & 10 \\
\hline
\end{array}
\]

A $(19, \{2,3\}, \{9,10\}, \{0,1\}, \{2,3\}: 4\times 14)$-NTA:
\[
\begin{array}{|cccccccccccccc|}
\hline
0 & 13 & 11 & 6 & 7 & 12 & 2 & 10 & 9 & 1 & 8 & 5 & 3 & 4 \\
14 & 0 & 15 & 16 & 2 & 7 & 18 & 1 & 8 & 5 & 3 & 4 & 6 & 17 \\
9 & 12 & 1 & 2 & 13 & 3 & 11 & 16 & 4 & 17 & 14 & 15 & 18 & 10 \\
16 & 17 & 6 & 12 & 14 & 15 & 5 & 8 & 18 & 9 & 11 & 13 & 10 & 7 \\
\hline
\end{array}
\]

A $(20, \{2,3\}, \{8,9\}, \{0,1\}, \{2,3\}: 4\times 14)$-NTA:
\[
\begin{array}{|cccccccccccccc|}
\hline
8 & 0 & 11 & 1 & 10 & 13 & 14 & 7 & 9 & 5 & 6 & 15 & 4 & 12 \\
7 & 9 & 18 & 5 & 19 & 2 & 16 & 3 & 4 & 11 & 10 & 6 & 17 & 8 \\
0 & 13 & 4 & 16 & 2 & 7 & 6 & 12 & 15 & 19 & 18 & 17 & 14 & 5 \\
16 & 17 & 1 & 15 & 12 & 11 & 3 & 9 & 19 & 14 & 13 & 8 & 10 & 18 \\
\hline
\end{array}
\]

A $(21, \{2,3\}, \{8,9\}, \{0,1\}, \{2,3\}: 4\times 14)$-NTA:
\[
\begin{array}{|cccccccccccccc|}
\hline
0 & 14 & 1 & 11 & 2 & 15 & 9 & 10 & 13 & 7 & 16 & 12 & 8 & 6 \\
18 & 0 & 7 & 20 & 17 & 19 & 12 & 3 & 10 & 4 & 9 & 8 & 6 & 11 \\
16 & 17 & 19 & 1 & 7 & 8 & 3 & 20 & 9 & 18 & 5 & 13 & 14 & 15 \\
13 & 19 & 10 & 17 & 12 & 2 & 15 & 16 & 4 & 14 & 11 & 5 & 20 & 18 \\
\hline
\end{array}
\]

A $(22, \{2,3\}, \{7,8\}, \{0,1\}, \{2,3\}: 4\times 14)$-NTA:
\[
\begin{array}{|cccccccccccccc|}
\hline
7 & 0 & 1 & 18 & 10 & 2 & 17 & 16 & 12 & 14 & 11 & 13 & 6 & 15 \\
0 & 15 & 12 & 8 & 9 & 13 & 11 & 3 & 19 & 4 & 21 & 10 & 14 & 20 \\
17 & 8 & 20 & 1 & 21 & 11 & 3 & 19 & 18 & 10 & 5 & 16 & 12 & 6 \\
19 & 16 & 7 & 13 & 2 & 20 & 9 & 14 & 4 & 17 & 15 & 5 & 21 & 18 \\
\hline
\end{array}
\]

A $(23, \{2,3\}, \{7,8\}, \{0,1\}, \{2,3\}: 4\times 14)$-NTA:
\[
\begin{array}{|cccccccccccccc|}
\hline
0 & 8 & 7 & 1 & 2 & 19 & 16 & 15 & 14 & 17 & 13 & 12 & 6 & 18 \\
20 & 0 & 22 & 13 & 14 & 10 & 9 & 3 & 4 & 12 & 15 & 16 & 21 & 6 \\
7 & 14 & 1 & 20 & 9 & 13 & 3 & 18 & 11 & 21 & 5 & 22 & 19 & 17 \\
19 & 21 & 15 & 8 & 20 & 2 & 17 & 10 & 18 & 4 & 11 & 5 & 16 & 22 \\
\hline
\end{array}
\]

An $(18, \{3,4\}, 12, \{0,1\}, \{3,4\}: 4\times 15)$-NTA:
\[
\begin{array}{|ccccccccccccccc|}
\hline
3 & 0 & 10 & 6 & 1 & 9 & 13 & 8 & 14 & 4 & 5 & 2 & 7 & 12 & 11 \\
9 & 4 & 16 & 0 & 7 & 1 & 15 & 10 & 8 & 11 & 2 & 6 & 17 & 3 & 5 \\
0 & 7 & 5 & 14 & 16 & 6 & 1 & 17 & 15 & 2 & 12 & 13 & 3 & 4 & 8 \\
13 & 15 & 0 & 11 & 12 & 4 & 5 & 1 & 2 & 17 & 9 & 16 & 14 & 10 & 3 \\
\hline
\end{array}
\]

A $(19, \{3,4\}, 11, \{0,1\}, \{3,4\}: 4\times 15)$-NTA:
\[
\begin{array}{|ccccccccccccccc|}
\hline
0 & 15 & 14 & 4 & 1 & 17 & 11 & 16 & 5 & 12 & 6 & 2 & 18 & 13 & 3 \\
11 & 0 & 9 & 8 & 10 & 1 & 3 & 13 & 2 & 17 & 7 & 14 & 4 & 18 & 16 \\
6 & 7 & 0 & 16 & 14 & 5 & 1 & 9 & 11 & 2 & 18 & 15 & 12 & 8 & 10 \\
17 & 13 & 3 & 0 & 6 & 7 & 15 & 1 & 10 & 8 & 2 & 4 & 9 & 5 & 12 \\
\hline
\end{array}
\]

A $(19, \{3,4\}, 13, \{0,1\}, \{3,4\}: 4\times 16)$-NTA:
\[
\begin{array}{|cccccccccccccccc|}
\hline
4 & 12 & 6 & 0 & 7 & 11 & 9 & 1 & 8 & 5 & 2 & 10 & 14 & 3 & 15 & 13 \\
0 & 5 & 8 & 7 & 1 & 18 & 17 & 10 & 4 & 9 & 6 & 2 & 11 & 16 & 3 & 12 \\
9 & 0 & 16 & 13 & 4 & 5 & 1 & 8 & 2 & 15 & 18 & 14 & 3 & 7 & 6 & 17 \\
18 & 14 & 0 & 11 & 15 & 1 & 6 & 13 & 17 & 2 & 12 & 16 & 4 & 5 & 10 & 3 \\
\hline
\end{array}
\]

A $(20, \{3,4\}, 12, \{0,1\}, \{3,4\}: 4\times 16)$-NTA:
\[
\begin{array}{|cccccccccccccccc|}
\hline
10 & 6 & 14 & 0 & 7 & 13 & 1 & 9 & 2 & 11 & 12 & 4 & 15 & 5 & 8 & 3 \\
0 & 8 & 4 & 9 & 17 & 1 & 19 & 6 & 18 & 5 & 7 & 2 & 10 & 3 & 16 & 11 \\
5 & 0 & 18 & 19 & 1 & 16 & 4 & 14 & 15 & 17 & 2 & 13 & 6 & 12 & 3 & 7 \\
13 & 17 & 0 & 12 & 15 & 11 & 10 & 1 & 8 & 2 & 16 & 9 & 3 & 18 & 14 & 19 \\
\hline
\end{array}
\]

A $(21, \{3,4\}, 11, \{0,1\}, \{3,4\}: 4\times 16)$-NTA:
\[
\begin{array}{|cccccccccccccccc|}
\hline
6 & 8 & 0 & 14 & 15 & 3 & 7 & 12 & 2 & 1 & 11 & 10 & 4 & 13 & 9 & 5 \\
0 & 17 & 19 & 18 & 1 & 10 & 2 & 4 & 9 & 20 & 8 & 5 & 16 & 6 & 3 & 7 \\
2 & 3 & 4 & 0 & 19 & 16 & 20 & 18 & 17 & 12 & 5 & 15 & 14 & 1 & 11 & 13 \\
12 & 0 & 11 & 7 & 9 & 13 & 15 & 8 & 14 & 10 & 16 & 18 & 6 & 17 & 20 & 19 \\
\hline
\end{array}
\]

A $(23, \{2,3\}, \{9,10\}, \{0,1\}, \{2,3\}: 4\times 16)$-NTA:
\[
\begin{array}{|cccccccccccccccc|}
\hline
20 & 4 & 8 & 12 & 16 & 15 & 19 & 3 & 7 & 11 & 18 & 2 & 6 & 10 & 14 & 0 \\
1 & 5 & 9 & 13 & 17 & 0 & 4 & 8 & 12 & 16 & 7 & 11 & 15 & 19 & 22 & 20  \\
2 & 21 & 10 & 14 & 18 & 5 & 9 & 13 & 17 & 1 & 0 & 4 & 8 & 12 & 16 & 6 \\
3 & 7 & 11 & 15 & 19 & 10 & 14 & 22 & 2 & 6 & 9 & 13 & 17 & 1 & 5 & 21 \\
\hline
\end{array}
\]

A $(20, \{3,4\}, 14, \{0,1\}, \{3,4\}: 4\times 17)$-NTA:
\[
\begin{array}{|ccccccccccccccccc|}
\hline
16 & 5 & 0 & 15 & 12 & 14 & 6 & 1 & 2 & 10 & 11 & 8 & 4 & 3 & 9 & 7 & 13 \\
0 & 18 & 12 & 7 & 4 & 1 & 8 & 11 & 9 & 19 & 2 & 13 & 3 & 5 & 6 & 10 & 17 \\
4 & 8 & 6 & 0 & 10 & 17 & 1 & 7 & 14 & 5 & 15 & 2 & 18 & 16 & 19 & 3 & 9 \\
11 & 0 & 17 & 19 & 1 & 5 & 16 & 18 & 4 & 2 & 6 & 7 & 13 & 12 & 3 & 14 & 15 \\
\hline
\end{array}
\]

A $(21, \{3,4\}, 13, \{0,1\}, \{3,4\}: 4\times 17)$-NTA:
\[
\begin{array}{|ccccccccccccccccc|}
\hline
0 & 6 & 5 & 14 & 11 & 9 & 16 & 1 & 4 & 7 & 2 & 8 & 13 & 3 & 15 & 12 & 10 \\
18 & 0 & 19 & 8 & 1 & 6 & 7 & 12 & 17 & 9 & 5 & 2 & 4 & 10 & 11 & 3 & 20 \\
4 & 17 & 0 & 20 & 14 & 1 & 18 & 8 & 2 & 15 & 13 & 16 & 19 & 6 & 3 & 7 & 5 \\
9 & 16 & 11 & 0 & 4 & 19 & 1 & 13 & 12 & 2 & 18 & 10 & 3 & 14 & 17 & 20 & 15 \\
\hline
\end{array}
\]

A $(22, \{3,4\}, 12, \{0,1\}, \{3,4\}: 4\times 17)$-NTA:
\[
\begin{array}{|ccccccccccccccccc|}
\hline
0 & 2 & 13 & 10 & 1 & 4 & 16 & 12 & 15 & 8 & 14 & 11 & 7 & 6 & 3 & 5 & 9 \\
6 & 11 & 7 & 0 & 3 & 21 & 1 & 17 & 9 & 4 & 20 & 5 & 18 & 19 & 10 & 8 & 2 \\
14 & 17 & 0 & 15 & 20 & 1 & 5 & 6 & 3 & 19 & 2 & 21 & 4 & 13 & 16 & 18 & 12 \\
8 & 0 & 20 & 19 & 11 & 14 & 7 & 1 & 17 & 16 & 10 & 13 & 12 & 9 & 18 & 15 & 21 \\
\hline
\end{array}
\]

A $(23, \{2,3\}, \{11,12\}, \{0,1\}, \{2,3\}: 4\times 17)$-NTA:
\[
\begin{array}{|ccccccccccccccccc|}
\hline
11 & 0 & 7 & 12 & 3 & 15 & 10 & 2 & 13 & 1 & 16 & 5 & 4 & 9 & 6 & 8 & 14 \\
0 & 18 & 1 & 2 & 19 & 21 & 4 & 6 & 8 & 22 & 10 & 9 & 7 & 17 & 5 & 3 & 20 \\
17 & 13 & 20 & 18 & 14 & 1 & 19 & 16 & 5 & 12 & 3 & 11 & 15 & 2 & 21 & 22 & 4 \\
16 & 21 & 11 & 8 & 7 & 9 & 13 & 22 & 17 & 10 & 20 & 19 & 18 & 14 & 12 & 15 & 6 \\
\hline
\end{array}
\]

A $(21, \{3,4\}, 15, \{0,1\}, \{3,4\}: 4\times 18)$-NTA:
\[
\begin{array}{|cccccccccccccccccc|}
\hline
12 & 9 & 0 & 14 & 13 & 4 & 1 & 8 & 11 & 2 & 7 & 17 & 5 & 6 & 15 & 3 & 10 & 16 \\
4 & 6 & 13 & 0 & 9 & 1 & 7 & 18 & 5 & 10 & 2 & 8 & 3 & 12 & 11 & 19 & 14 & 20 \\
0 & 17 & 7 & 8 & 5 & 15 & 10 & 1 & 19 & 6 & 9 & 2 & 18 & 20 & 3 & 16 & 4 & 11 \\
5 & 0 & 18 & 15 & 1 & 6 & 17 & 12 & 2 & 16 & 20 & 13 & 14 & 3 & 7 & 8 & 19 & 4 \\
\hline
\end{array}
\]

A $(22, \{3,4\}, 14, \{0,1\}, \{3,4\}: 4\times 18)$-NTA:
\[
\begin{array}{|cccccccccccccccccc|}
\hline
10 & 0 & 14 & 16 & 4 & 1 & 9 & 13 & 6 & 11 & 2 & 15 & 7 & 5 & 12 & 3 & 8 & 17 \\
4 & 12 & 0 & 8 & 7 & 19 & 1 & 18 & 10 & 5 & 20 & 2 & 3 & 13 & 6 & 9 & 11 & 21 \\
15 & 18 & 6 & 0 & 1 & 5 & 14 & 8 & 21 & 2 & 7 & 9 & 19 & 3 & 17 & 16 & 20 & 4 \\
0 & 5 & 19 & 21 & 16 & 15 & 12 & 1 & 2 & 17 & 14 & 18 & 11 & 20 & 3 & 10 & 4 & 13 \\
\hline
\end{array}
\]

A $(23, \{3,4\}, 13, \{0,1\}, \{3,4\}: 4\times 18)$-NTA:
\[
\begin{array}{|cccccccccccccccccc|}
\hline
14 & 0 & 11 & 5 & 7 & 3 & 17 & 1 & 2 & 9 & 4 & 12 & 10 & 6 & 16 & 15 & 8 & 13 \\
18 & 6 & 0 & 12 & 1 & 8 & 10 & 22 & 11 & 7 & 21 & 2 & 19 & 9 & 5 & 3 & 20 & 4 \\
7 & 15 & 3 & 0 & 20 & 16 & 1 & 6 & 13 & 17 & 2 & 14 & 4 & 21 & 22 & 19 & 5 & 18 \\
0 & 10 & 21 & 19 & 15 & 1 & 18 & 13 & 20 & 2 & 8 & 22 & 16 & 14 & 11 & 9 & 17 & 12 \\
\hline
\end{array}
\]

A $(22, \{3,4\}, 16, \{0,1\}, \{3,4\}: 4\times 19)$-NTA:
\[
\begin{array}{|ccccccccccccccccccc|}
\hline
8 & 0 & 18 & 17 & 1 & 10 & 9 & 15 & 2 & 5 & 6 & 7 & 4 & 11 & 12 & 3 & 13 & 16 & 14 \\
20 & 15 & 0 & 11 & 14 & 5 & 1 & 19 & 4 & 2 & 8 & 12 & 9 & 3 & 6 & 7 & 21 & 13 & 10 \\
4 & 5 & 10 & 0 & 21 & 1 & 6 & 7 & 19 & 16 & 2 & 9 & 18 & 20 & 3 & 8 & 11 & 12 & 17 \\
0 & 9 & 6 & 7 & 4 & 8 & 16 & 1 & 13 & 14 & 15 & 2 & 3 & 5 & 17 & 21 & 18 & 20 & 19\\
\hline
\end{array}
\]

A $(23, \{3,4\}, 15, \{0,1\}, \{3,4\}: 4\times 19)$-NTA:
\[
\begin{array}{|ccccccccccccccccccc|}
\hline
11 & 16 & 6 & 0 & 10 & 12 & 1 & 7 & 8 & 2 & 9 & 17 & 15 & 5 & 3 & 13 & 14 & 4 & 18 \\
20 & 9 & 0 & 7 & 22 & 5 & 8 & 1 & 2 & 13 & 6 & 10 & 11 & 14 & 21 & 3 & 4 & 19 & 12 \\
0 & 5 & 22 & 18 & 1 & 17 & 6 & 21 & 16 & 15 & 2 & 20 & 19 & 3 & 10 & 9 & 8 & 7 & 4 \\
4 & 0 & 12 & 13 & 11 & 1 & 20 & 14 & 22 & 5 & 19 & 2 & 3 & 18 & 6 & 17 & 15 & 16 & 21 \\
\hline
\end{array}
\]

A $(24, \{3,4\}, 14, \{0,1\}, \{3,4\}: 4\times 19)$-NTA:
\[
\begin{array}{|ccccccccccccccccccc|}
\hline
11 & 12 & 0 & 14 & 8 & 1 & 17 & 4 & 2 & 16 & 7 & 10 & 6 & 18 & 3 & 13 & 5 & 15 & 9 \\
6 & 23 & 13 & 0 & 1 & 22 & 5 & 21 & 9 & 12 & 11 & 2 & 19 & 10 & 8 & 3 & 20 & 7 & 4 \\
0 & 17 & 7 & 8 & 18 & 16 & 1 & 14 & 6 & 19 & 2 & 5 & 3 & 4 & 21 & 23 & 15 & 22 & 20\\
21 & 0 & 19 & 20 & 23 & 10 & 13 & 1 & 22 & 2 & 18 & 14 & 15 & 3 & 9 & 16 & 11 & 12 & 17 \\\hline
\end{array}
\]

A $(23, \{3,4\}, 17, \{0,1\}, \{3,4\}: 4\times 20)$-NTA:
\[
\begin{array}{|cccccccccccccccccccc|}
\hline
5 & 10 & 12 & 0 & 1 & 11 & 16 & 17 & 2 & 6 & 9 & 18 & 3 & 15 & 7 & 8 & 4 & 19 & 14 & 13 \\
9 & 6 & 0 & 13 & 10 & 1 & 7 & 8 & 11 & 14 & 15 & 2 & 5 & 22 & 3 & 21 & 16 & 12 & 4 & 20 \\
0 & 17 & 21 & 19 & 20 & 9 & 1 & 12 & 5 & 2 & 7 & 8 & 13 & 6 & 11 & 3 & 18 & 10 & 22 & 4 \\
14 & 0 & 7 & 8 & 5 & 6 & 22 & 1 & 21 & 19 & 2 & 10 & 18 & 3 & 20 & 16 & 9 & 4 & 17 & 15 \\
\hline
\end{array}
\]

A $(24, \{3,4\}, 16, \{0,1\}, \{3,4\}: 4\times 20)$-NTA:
\[
\begin{array}{|cccccccccccccccccccc|}
\hline
5 & 0 & 13 & 18 & 10 & 1 & 7 & 8 & 14 & 2 & 19 & 11 & 9 & 6 & 17 & 3 & 4 & 12 & 15 & 16 \\
12 & 21 & 0 & 10 & 5 & 6 & 1 & 22 & 2 & 23 & 7 & 15 & 3 & 11 & 14 & 8 & 13 & 20 & 4 & 9 \\
16 & 9 & 7 & 0 & 1 & 19 & 18 & 17 & 8 & 6 & 2 & 21 & 5 & 20 & 3 & 23 & 11 & 4 & 10 & 22 \\
0 & 6 & 22 & 20 & 13 & 14 & 21 & 1 & 5 & 16 & 12 & 2 & 15 & 3 & 7 & 18 & 19 & 17 & 23 & 4 \\\hline
\end{array}
\]

A $(25, \{3,4\}, 15, \{0,1\}, \{3,4\}: 4\times 20)$-NTA:
\[
\begin{array}{|cccccccccccccccccccc|}
\hline
12 & 7 & 0 & 14 & 18 & 1 & 13 & 15 & 17 & 2 & 16 & 8 & 3 & 6 & 10 & 5 & 19 & 11 & 9 & 4 \\
6 & 21 & 13 & 0 & 20 & 11 & 1 & 22 & 10 & 23 & 5 & 2 & 9 & 24 & 7 & 3 & 8 & 4 & 12 & 14 \\
15 & 0 &  16 & 5 & 1 & 7 & 8 & 9 & 2 & 6 & 21 & 18 & 20 & 19 & 3 & 22 & 4 & 17 & 23 & 24 \\
0 & 17 & 22 & 23 & 12 & 19 & 21 & 1 & 24 & 11 & 2 & 14 & 13 & 3 & 16 & 18 & 10 & 20 & 4 & 15 \\
\hline
\end{array}
\]

A $(24, \{3,4\}, 18, \{0,1\}, \{3,4\}: 4\times 21)$-NTA:
\[
\begin{array}{|ccccccccccccccccccccc|}
\hline
5 & 16 & 7 & 0 & 1 & 6 & 10 & 11 & 2 & 12 & 9 & 8 & 17 & 18 & 3 & 14 & 15 & 19 & 20 & 4 & 13 \\
0 & 6 & 11 & 13 & 14 & 9 & 1 & 17 & 5 & 23 & 2 & 21 & 3 & 15 & 7 & 8 & 4 & 10 & 12 & 16 & 22 \\
12 & 10 & 0 & 20 & 21 & 1 & 7 & 8 & 18 & 6 & 14 & 2 & 5 & 3 & 13 & 23 & 9 & 4 & 11 & 22 & 19 \\
9 & 0 & 22 & 8 & 5 & 19 & 23 & 1 & 11 & 2 & 7 & 10 & 20 & 6 & 16 & 3 & 21 & 17 & 4 & 18 & 15\\
\hline
\end{array}
\]
}


\begin{thebibliography}{99}\addtolength{\itemsep}{-2.05ex}

\bibitem{Agrawal1966}
H. Agrawal, Some methods of construction of designs for two-way elimination of
heterogeneity, J. Amer. Statist. Assoc. 61 (1966) 1153-1171.

\bibitem{Bailey2017}
R. A. Bailey, Relations among partitions, In: A. Claesson, M. Dukes, S. Kitaev,
D. Manlove, and K. Meeks(Eds.), Surveys in Combinatorics 2017. London Mathematical Society Lecture Note Series. Cambridge University Press, 2017: 1-86.

\bibitem{Bagchi-Shah1989}
S. Bagchi, K. R. Shah, On the optimality of a class of row-column designs, J. Statist. Plann. Inference 23(3) (1989) 397-402.

\bibitem{Bagchi1998}
S. Bagchi, On two-way designs, Graphs Combin. 14(4) (1998) 313-319.

\bibitem{Bagchi2026}
S. Bagchi, B. Bagchi, Triple arrays from ovals in finite projective
planes. arXiv:2607.20275, 2026.

\bibitem{Bofill2004}
P. Bofill, C. Torras, MBMUDs: a combinatorial extension of BIBDs showing good optimality behaviour, J. Statist. Plann. Inference 124(1) (2004) 185-204.

\bibitem{Bose1942}
R. C. Bose, An affine analogue of Singer's theorem, J. Indian Math. Soc. 6 (1942)
1-15.

\bibitem{Buratti2021}
M. Buratti, D. Stinson, New results on modular Golomb rulers, optical
orthogonal codes and related structures, Ars Math. Contemp. 20 (2021) 1-27.


\bibitem{Handbook}
C. J. Colbourn, J. H. Dinitz, Handbook of Combinatorial Designs, 2nd ed., Chapman $\&{}$ Hall/CRC, 2007.


\bibitem{DFGMP-2016}
A. A. Davydov, G. Faina, M. Giulietti, S. Marcugini, F. Pambianco, On constructions and parameters of symmetric configurations $v_k$. Des. Codes Cryptogr. 80 (2016) 125-147.


\bibitem{Fon-Der-Flaass-1997}
D. G. Fon-Der-Flaass, Arrays of distinct representatives-a very simple NP-complete
problem. Discrete Math. 171(1) (1997) 295-298.

\bibitem{Gordon1994}
D. M. Gordon, The prime power conjecture is true for $n < 2000 000$, Electron. J. Combin. 1 (1994) \#R6.

\bibitem{Gordeev2026}
A. Gordeev, K. Markstr\"{o}m, L. -D. \"{O}hman, Near triple arrays, J. Combin. Theory Ser. A 219 (2026) 106121.

\bibitem{JMSO23}
G. J\"{a}ger, K. Markstr\"{o}m, D. Shcherbak, L. -D. \"{O}hman. Small Youden rectangles, near Youden rectangles, and their connections to other
row-column designs. Discrete Math. Theor. Comput. Sci.
25(1) (2023): \#9.

\bibitem{John1977}
J. A. John, T. J. Mitchell, Optimal incomplete block designs,
J. R. Stat. Soc. Ser. B Stat. Methodol. 39(1) (1977) 39-43.

\bibitem{McSorley2005}
J. P. McSorley, N. C. K. Phillips, W. D. Wallis, J. L. Yucas, Double arrays, triple arrays and balanced grids, Des. Codes Cryptogr. 35(1) (2005) 21-45.

\bibitem{McSorley37}
J. P. McSorley. Double arrays, triple arrays and balanced grids with $v=r+c
-1$. Designs, Codes and Cryptogr. 37(2) (2005) 313-318.

\bibitem{Nilson2015}
T. Nilson, L. -D. \"{O}hman, Triple arrays and Youden squares, Des. Codes Cryptogr. 75(3) (2015) 429-451.

\bibitem{Nilson2017}
T. Nilson, P. J. Cameron, Triple arrays from difference sets, J. Combin. Des. 25(11) (2017) 494-506.

\bibitem{Preece2005}
D. A. Preece, W. D. Wallis, J. L. Yucas, Paley triple arrays, Australas. J.
Combin. 33 (2005) 237-246.

\bibitem{Ruzsa1993}
I. Z. Ruzsa, Solving a linear equation in a set of integers I, Acta Arith. 65 (1993), 259-282.

\bibitem{Seberry1979}
J. R. Seberry, A note on orthogonal Graeco-Latin designs, Ars Combin. 8 (1979) 85-94.

\bibitem{Street1981}
D. J. Street, Graeco-Latin and nested row and column designs.
In: K. L. McAvaney (ed.), Combinatorial Mathematics VIII,
Lecture Notes in Mathematics 884, Springer-Verlag,
Berlin, 1981: 304-313.
\end{thebibliography}
\end{document}